\definecolor{darkgreen}{rgb}{0.0, 0.7, 0.0}
\newenvironment{??}{\noindent \color{darkgreen}{\bf ???:} \footnotesize}{}
\definecolor{cyan}{cmyk}{1,0,0,0}
\newcommand{\bdg}{\begin{dg}}
\theoremstyle{plain}
\newtheorem{thm}{Theorem}[section]
\newtheorem{coroll}[thm]{Corollary}
\newtheorem{defn}[thm]{Definition}
\newtheorem{lemma}[thm]{Lemma}
\newtheorem{prop}[thm]{Proposition}
\newtheorem{remark}[thm]{Remark}
\newtheorem*{thm*}{Main Theorem}
\newtheorem{notn}[thm]{Notation}
\newtheorem{context}[thm]{Context}
\newcommand{\bseries}[1]{ [\hspace{-0,5mm}[ {#1} ]\hspace{-0,5mm}] }
\tikzset{
  symbol/.style={
    draw=none,
    every to/.append style={
      edge node={node [sloped, allow upside down, auto=false]{$#1$}}}
  }
}
\DeclareMathOperator{\colim}{colim}
\newcommand{\cO}{\mathcal{O}}
\newcommand{\cE}{\mathcal{E}}
\newcommand{\cF}{\mathcal{F}}
\newcommand{\cM}{\mathcal{M}}
\newcommand{\on}{\operatorname}
\newcommand{\Spec}{\on{Spec}}
\definecolor{darkgreen}{rgb}{0.0, 0.7, 0.0}
\begin{document}
\title{\textbf{Geometry of the logarithmic Hodge moduli space}}
\author{Mark Andrea de Cataldo and Andres Fernandez Herrero,
\\
with an Appendix joint with Siqing Zhang}
\date{}
\maketitle
\begin{abstract}
We show the  smoothness over the affine line of the Hodge moduli space of logarithmic $t$-connections of coprime rank and degree on a smooth projective curve with geometrically integral fibers over an arbitrary Noetherian base.  When the base is a field, we also prove that the Hodge moduli space is geometrically integral. 
Along the way, we prove the same results for
the corresponding moduli spaces of logarithmic Higgs bundles and
of logarithmic  connections. We use smoothness to derive specialization isomorphisms on the \'etale cohomology rings of these moduli 
spaces; this includes the special case when the base is of mixed characteristic.  In the special case where the base is a separably closed field of positive characteristic, we show that these isomorphisms are filtered isomorphisms for the perverse filtrations associated with the corresponding Hitchin-type morphisms.

\end{abstract}

\begin{footnotesize}2020 Mathematics Subject Classification: 14D20 (primary), 14D23, 14F20 (secondary)
\end{footnotesize}

\tableofcontents

\begin{section}{Introduction}\label{sec:intro}
Let $C$ be a compact Riemann surface. The Non Abelian Hodge Theorem 
(NAHT) of
Simpson, Corlette and others (see \cite{simpson-repnII} and references therein), yields a canonical homeomorphism between three different moduli spaces of objects on the curve, namely, the moduli spaces of:
semistable Higgs bundles of fixed rank $n$ and degree $d=0$ (Higgs moduli space);
 algebraic flat connections of rank $n$ (de Rham moduli space); 
 representations of the fundamental group of the curve into the  general linear group 
${\rm GL}_n$ (Betti moduli space).

 In particular, the cohomology rings of these moduli spaces are canonically isomorphic, 
 a fact that we may call Cohomological NAHT. Following a suggestion by Deligne, 
 Simpson has introduced the moduli space of semistable  $t$-connections 
 (Hodge moduli space) on the curve, which
interpolates between the  moduli space of Higgs bundles 
(set $t=0$)
and the one of algebraic flat connections (set $t=1$).
The Hodge moduli space is topologically trivial over the affine line 
(corresponding to the parameter $t$) 
and one can view this triviality as an incarnation
of the NAHT.

The NAHT on a curve over the complex numbers has no direct analogue for curves over 
fields of positive characteristic. In this context, there is a Frobenius-twisted NAHT 
(cf. \cite{ogus-vologodsky}, \cite{groechenig-moduli-flat-connections}, \cite{chen-zhu}),
but this does not identify  moduli spaces. Absent such a NAHT, 
one can ask whether one still has an isomorphism
between the cohomology rings of the Higgs and de Rham moduli spaces.
Under natural conditions of coprimality involving rank, degree and characteristic, the smoothness of the Hodge moduli space and the isomorphism on cohomology rings have been addressed in \cite{decataldo-zhang-nahpostive}.

In this paper, we study logarithmic $t$-connections on a curve,
i.e. $t$-connections with at most simple poles
along a fixed effective reduced divisor on the curve. We focus on the
 case of coprime rank $n$ and degree $d.$ 
 
We provide an explicit treatment of the deformation theory of $t$-connections on curves 
leading to the smoothness of the Hodge/Higgs/de Rham moduli spaces.
As an application, we construct  in the case of coprime rank and degree  a canonical isomorphisms between the cohomology rings of these three moduli spaces.
 
There seems to be no complete study in the literature concerning the smoothness of the moduli of logarithmic $t$-connections, 
one that constructs  modules of obstructions, obstructions classes,
and provides an explicit criterion for smoothness. The rest of this paragraph is devoted to summarize the literature we are aware of on the subject.
A classical reference
for logarithmic Higgs bundles is \cite[\S6]{nitsure-higgs} where the dimension 
of tangent spaces is computed and smoothness in the coprime case 
can be derived indirectly, by use of the BNR correspondence (see \cite[\S2.1]{de-cataldo-support-sln}, for example).  The papers \cite{biswas-ramanan-infinitesimal, markman-spectral} have treatments of deformation theory of Hitchin pairs including descriptions of tangent spaces and obstructions, which are used in \cite[Cor. 7.9]{markman-spectral} to prove the smoothness of the moduli space of stable (logarithmic) Higgs bundles. \cite{yokogawa-infinitesimal-higgs}
deals with moduli of parabolic Higgs bundles and proves it is integral, normal and smooth at parabolic stable points; for special parabolic data, i.e. trivial filtration and weight adapted to the degree, this result specializes to our picture for 
logarithmic Higgs bundles. The paper \cite{boden_yokogawa} has a treatment of parabolic Higgs bundles when the rank is $2$, and shows that the moduli space is simply connected \cite[Thm. 4.2]{boden_yokogawa}. For a treatment of deformation theory of Hitchin pairs over the complex numbers using dgla language 
see  \cite[Thm. 5.3]{martinego-infinitesimal}. A classical reference for logarithmic connections over the complex numbers is \cite{nitsure-log-connections},
where the tangent space of the moduli problem is described as the hypercohomology of a complex, but where there is no treatment of obstructions. In the case of logarithmic connections with parabolic structure over the curve $\mathbb{P}^1$, the smoothness and irreducibility of the moduli space for generic stability parameters was shown in \cite[Thm. 1.1]{inaba_iwasaki_saito}. There is a treatment of deformation theory of parabolic logarithmic connections in \cite{inaba_riemann_hilbert}, where also the irreducibility of the moduli of parabolic logarithmic connections is proven \cite[Prop. 5.2, Prop. 5.3, Prop. 5.4]{inaba_riemann_hilbert}. \cite[\S5.3]{hao-lambda-modules-dm-stacks} discusses the deformation theory in the more general setup of $\Lambda$-modules on Deligne-Mumford stacks, describing a tangent-obstruction theory in this context,
but not addressing smoothness nor integrality questions. The article \cite{alfaya-oliveira-lie-algerbroids} discusses deformation theory and proves smoothness of the moduli space of modules for a Lie algebroid on a smooth projective curve over $\mathbb{C}$ in the coprime case. This includes the moduli of $t$-connections as a special case.

Let us introduce  the setup of this paper. Fix a family $C_B/B$
of smooth projective geometrically integral curves over a Noetherian base scheme $B,$ which could be, for example, a field, a DVR of equal or of mixed characteristic, etc.
Let $D_B/B$ be a relative strictly effective reduced divisor on the family of curves. We fix the rank $n$ and the degree $d$ of $t$-connections (the rank and degree of the underlying vector bundles) and we assume that they are coprime
$g.c.d.(n,d)=1.$

In this paper, we  study the Hodge moduli space
$M{\rm{Hodge}}_{C_B}^{ss} \to \mathbb{A}^1_B$
of  semistable logarithmic $t$-connections on $C_B/B$ of coprime rank $n$  and degree $d,$ 
with simple poles along $D_B/B.$  Our first main result is the following.

\begin{thm} \label{thm:smooth}
Assume that $n$ and $d$ are coprime and the fibers of the divisor $D$ are nonempty. The structural morphism (cf. Notation \ref{notn:tau_B})
$\tau_B:M{\rm{Hodge}}_{C_B}^{ss} \to \mathbb{A}^1_B$ 
is smooth. For each $a \in \mathbb{A}^1_B$, the fiber $(M{\rm{Hodge}}_{C_B}^{ss})_a$ is geometrically integral of dimension $n^2(2g-2+{\rm deg}(D_a))+1$.
 The same is true  for the fibers over the points $b \in B$ of the  morphisms  
$v_{{\rm{Higgs}},B}$ (\ref{notn:vhiggsB}), 
$v_{{\rm de \, Rham},B}$ (\ref{notn:vdrB}) and 
(here add $+1$ to the dimension)
$v_{{\rm{Hodge}},B}$ (\ref{notn:vHodgeB}).
\end{thm}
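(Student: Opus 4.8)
The plan is to prove smoothness of $\tau_B$ together with geometric integrality and the dimension count for fibers in a single argument that reduces everything to the fibers over points $b \in B$, where the base is a field, and then to the deformation-theoretic heart of the matter. First I would reduce to the case where $B = \Spec k$ for a field $k$: smoothness of $\tau_B$ can be checked fiberwise over $B$ by the local criterion of smoothness, since $M{\rm{Hodge}}^{ss}_{C_B}$ is of finite presentation over $\mathbb{A}^1_B$ (the moduli space is constructed by GIT, so finite presentation is standard), so it suffices to show that for every $b \in B$ the map $(M{\rm{Hodge}}^{ss}_{C_B})_b \to \mathbb{A}^1_{\kappa(b)}$ is smooth. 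Likewise geometric integrality and the dimension of the fibers over $a \in \mathbb{A}^1_B$ only depend on the geometric fibers, so we may base change to an algebraically closed field. Thus the problem becomes: for $C$ a smooth projective geometrically integral curve over an algebraically closed field $k$, with $D$ a nonempty reduced divisor, show that $M{\rm{Hodge}}^{ss}_C \to \mathbb{A}^1_k$ is smooth with geometrically integral fibers of the stated dimension, and deduce the analogous statements for the Higgs, de Rham and Hodge moduli spaces over $B$ from the fibral statements just as for $\tau_B$.

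The core of the proof is the deformation theory of semistable logarithmic $t$-connections. For a $k$-point $(E, \nabla, t)$ of $M{\rm{Hodge}}^{ss}_C$, coprimality of $(n,d)$ forces stability $=$ semistability and the point to be a smooth point of the stack with trivial automorphisms, so the moduli space is locally the deformation functor of $(E,\nabla,t)$. I would write down the two-term complex governing deformations: the ``logarithmic de Rham--Higgs'' complex $C^\bullet(E,\nabla,t) = [\,\mathcal{E}nd(E) \xrightarrow{\nabla} \mathcal{E}nd(E)\otimes \Omega^1_C(\log D)\,]$, whose first hypercohomology $\mathbb{H}^1$ is the tangent space of the fiber over $t$ and whose $\mathbb{H}^2$ contains the obstructions; one augments this by the $t$-direction to get the tangent-obstruction theory for the total space $M{\rm{Hodge}}^{ss}_C$ mapping to $\mathbb{A}^1$. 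The key computations are: (i) $\mathbb{H}^0(C^\bullet) = 0$ (scalars are excluded by the trace-zero/stability argument, or one works with the trace-free version and adds the line bundle deformations separately, matching the ``$+1$'' in the dimension), and (ii) $\mathbb{H}^2(C^\bullet) = 0$. Point (ii) is the crucial vanishing that yields smoothness: by Serre duality on the curve, $\mathbb{H}^2(C^\bullet)$ is dual to $\mathbb{H}^0$ of the Serre-dual complex $[\mathcal{E}nd(E)(-D) \xrightarrow{-\nabla^\vee} \mathcal{E}nd(E)]$ shifted appropriately, i.e. to the space of $t$-connection morphisms $E \to E(-D)$ (equivalently, global endomorphisms lowering the pole order), and this vanishes because such a morphism would have to be zero on a stable bundle — here is precisely where the hypothesis that $D$ is nonempty enters, since for $D = 0$ the identity endomorphism survives. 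Granting the vanishing, the Euler characteristic of $C^\bullet$ (Riemann--Roch on the curve, using $\deg \Omega^1_C(\log D) = 2g-2+\deg D$) gives $\dim \mathbb{H}^1 = n^2(2g-2+\deg(D_a))$, and adding the free $t$-parameter gives $+1$ for the fiber dimension over $a \in \mathbb{A}^1$, and a further $+1$ for the total Hodge space over $B$.

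Having established smoothness and the dimension of every fiber, geometric integrality of the fibers I would obtain as follows. Smoothness implies the fibers are regular, hence reduced and a disjoint union of integral components all of the stated dimension; so it remains to show connectedness of each geometric fiber. For this I would use the Hitchin-type morphism: the fiber $(M{\rm{Hodge}}^{ss}_C)_a$ carries a proper morphism to the Hitchin base (an affine space), which for $a=0$ is the logarithmic Hitchin map on the Higgs moduli space; the generic fiber of the Hitchin map is an abelian-torsor (a component of the compactified Jacobian of a smooth spectral curve), hence connected, and the Hitchin base is connected, so the total space is connected provided one knows the Hitchin map is surjective with all fibers connected, or more robustly by a semicontinuity/specialization argument from the smooth generic point. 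An alternative, and probably cleaner, route is to prove connectedness of the fiber over $a=0$ (the logarithmic Higgs moduli space) by the spectral/BNR correspondence identifying it birationally with a moduli of line bundles on spectral curves — which is irreducible — and then to propagate connectedness to all $a \in \mathbb{A}^1$ using that $\tau_B$ is smooth and $\mathbb{A}^1$ is connected together with Zariski's connectedness theorem (a smooth, in particular flat, proper-over-its-Stein-factorization morphism with one connected fiber has all fibers connected), or by invoking the $\mathbb{G}_m$-action scaling $t$ that contracts every fiber toward the $t=0$ fiber. I expect the main obstacle to be precisely the $\mathbb{H}^2$-vanishing in step (ii): one must set up the Serre duality for the hypercohomology of the two-term logarithmic complex carefully (the correct dualizing complex, the sign of the dual connection, the twist by $\mathcal{O}(-D)$), and then argue that no nonzero $t$-connection-compatible homomorphism $E \to E(-D)$ exists — uniformly in the stability parameter and in $t$, including the degenerate Higgs case $t=0$ where $\nabla$ is merely $\mathcal{O}_C$-linear — and it is exactly here that the nonemptiness of $D$ is indispensable.
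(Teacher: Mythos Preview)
Your overall strategy is sound and close to the paper's, but there are two substantive departures worth flagging. First, for the key vanishing $\mathbb{H}^2(C^\bullet)=0$ you propose Serre-dualizing the two-term complex directly for every $t$. When $t\neq 0$ the differential $\mathrm{ad}\,\nabla$ is \emph{not} $\mathcal{O}_C$-linear (it is a connection on $\mathcal{E}nd(E)$), so this is not ordinary coherent duality but a Poincar\'e-type duality for the logarithmic de Rham complex; it can be made to work, but it needs a proof and some care in positive characteristic. The paper sidesteps this entirely: it proves that the obstruction module base-changes well, then uses the $\mathbb{G}_m$-action together with Langer's semistable reduction to degenerate any semistable $t$-connection to a semistable (indeed polystable) logarithmic \emph{Higgs} bundle, where $\mathrm{ad}\,\nabla$ \emph{is} $\mathcal{O}_C$-linear and ordinary Serre duality applies; semicontinuity along the $\mathbb{A}^1$-family then gives the vanishing at the original point. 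Your direct duality buys a one-step proof when the duality is available; the paper's degeneration buys uniformity without ever leaving coherent Serre duality, and incidentally proves smoothness of the \emph{stack} with no coprimality assumption.

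Second, you pass from ``stable $\Rightarrow$ trivial automorphisms'' to ``the moduli space is locally the deformation functor,'' but the automorphisms are not trivial: they are exactly $\mathbb{G}_m$. The paper handles this by showing the semistable stack is a $\mathbb{G}_m$-gerbe over its good moduli space (rigidification), so smoothness of the space is equivalent to smoothness of the stack and the tangent spaces agree. Relatedly, your dimension bookkeeping is off: $\mathbb{H}^0(C^\bullet)$ equals $k$ (scalars), not $0$, so with $\mathbb{H}^2=0$ Riemann--Roch gives $\dim\mathbb{H}^1=n^2(2g-2+\deg D)+1$ already for the fiber over $a$; the extra ``$+1$'' in the theorem is this $\mathbb{H}^0$ contribution, not the $t$-direction (which is fixed on a fiber).

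For geometric integrality your plan matches the paper: prove connectedness of the Higgs fiber via spectral curves/BNR (the generic spectral curve being integral) and then propagate to all $t$ via the $\mathbb{G}_m$-contraction, which the paper implements through the semistable reduction theorem.
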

The proof of Theorem \ref{thm:smooth}) consists of first studying the deformation theory of $t$-connections
and then proving it is unobstructed. 
The vanishing of the obstruction class is proved using a degeneration argument from de Rham to Higgs inside of Hodge,
involving a vanishing theorem that makes essential use of the nonemptiness of the divisor of poles
$D_B/B$ on the geometric fibers of $C_B/B.$ In fact, in Proposition \ref{prop: smoothness of the stack} we show that under the same nonemptiness assumption, the Hodge stack of semistable objects is smooth over $\mathbb{A}^1_B$
without imposing any conditions on degree and rank. In the non coprime case, while the stack is smooth, the moduli space is usually singular; see  Remark \ref{stsmoospno}.

In \S\ref{smooth:nopoles}, we complement Theorem \ref{thm:smooth} by proving
a similar smoothness assertion in the case without poles, under necessary and thus natural numerical conditions.

We offer two applications (Theorem \ref{thm:spk} and Theorem \ref{thm:spv})
of the smoothness result (Theorem \ref{thm:smooth}) that relate to each other the cohomology rings of the Hodge, Higgs and de Rham moduli spaces.
The paper \cite{decataldo-zhang-nahpostive} 
proves  a weaker version of  Theorem \ref{thm:spk} 
in the case without poles, when $B$ is a field of positive characteristic 
and the rank and degree are subject to necessary, thus natural, conditions.

 Theorem \ref{thm:spk} and Theorem \ref{thm:spv} could be viewed as the cohomological 
shadow of a currently non-existing logarithmic NAHT in arbitrary, even mixed characteristic.
Even in the case of curves over the complex numbers, 
it is not clear to us how the moduli space of logarithmic $t$-connections would fit into the context of the parabolic NAHT of Simpson
and Mochizuki; see  \cite{simpson-harmonic-noncompact}, \cite[Cor. 1.5]{mochizuki-harmonic-tame-ii}.

The first application Theorem \ref{thm:spk} (see the companion diagram (\ref{eq:qapl})) is for the case when $B=Spec(k)$ is the spectrum of a separably closed field.
It shows that in the coprime case with poles, the natural restriction morphisms
on  cohomology rings (decorations omitted)  $H^*(M{\rm Higgs}) \leftarrow H^*(M{\rm Hodge}) \to H^*(M{\rm de Rham})$ are isomorphisms.  In fact, it shows that the specialization morphism
relating $H^*(M{\rm Higgs})$ and $H^*(M{\rm de Rham})$ satisfies the following:

\noindent 1) It is defined; a priori 
such a morphism does not exist due to lack of properness of the morphism $\tau_k: M{\rm Hodge} \to \mathbb{A}^1_k.$ We circumvent the lack of properness by means of a suitable completion
of the morphism $\tau_k.$

\noindent 2) It is an isomorphism.

\noindent 3)  If furthermore the field has positive characteristic, then all these isomorphisms are filtered isomorphisms for the perverse filtrations associated with
the various Hitchin-type morphisms in the picture (see \ref{sec:hitchmo}).
 
The second application Theorem \ref{thm:spv} (see the companion diagram (\ref{eq:iso055})) is for the coprime case with poles when $B = Spec(R)$ is the spectrum of a discrete valuation ring $R$. 
In this case, we have nine moduli spaces: Hodge/Higgs/de Rham
over the geometric closed point, over the geometric generic point and over the DVR. Their cohomology groups are related by restriction maps (denoted by the letters $\rho$ and $r$).
We prove the following:

\noindent a) All these restriction maps are isomorphisms.

\noindent b) The resulting collection of specialization maps are defined and are isomorphisms. 
 Again, we need to circumvent the lack of properness of various structural morphisms by means
of suitable compactifications. For technical reasons, if the DVR $R$ is of mixed characteristic $(0,p>0),$ we assume that $p>n,$ i.e that the rank  is smaller than $p.$

\noindent c) If furthermore, the DVR $R$ has equal positive characteristic, then all these isomorphisms are filtered isomorphisms for the perverse filtrations associated with
the various Hitchin-type morphisms.

 In both applications, we use compactification methods from 
\cite{decataldo-cambridge}, suitably generalized in \cite{decataldo-zhang-completion}. To this end, we need to recall in \S\ref{subs:1100} the construction
of suitably good compactifications of the relevant moduli spaces
given in \cite{decataldo-zhang-completion}.
Since such a compactification has not been constructed in the case of Hodge moduli spaces in
\cite{decataldo-zhang-completion}, we provide one here.

Finally, in the appendix \S\ref{appdx}  jointly written with S. Zhang, we provide a construction of the Hodge-Hitchin  morphism correcting minor inaccuracies in the literature.

{\bf Acknowledgments.} We thank Roberto Fringuelli, Jochen Heinloth, Pengfei Huang, Georgios Kydonakis, Mirko Mauri, Hao Sun, Siqing Zhang, and Lutian Zhao for useful conversations. We would also like to thank the referee for carefully reading the manuscript.
The first-named author has been partially supported by NSF grants
1901975 and DMS-2200492, and by a Simons Fellowship in Mathematics Award n. 672936.  
\end{section}

\begin{section}{Moduli stacks/spaces with poles}\label{section:moduli}

\subsection{Notation and setup}\label{subsection:setup}
We work over a Noetherian scheme $B$. Let $\pi: C \to B$ be a smooth proper morphism of schemes with geometrically integral fibers of dimension 1. We refer to $C\to B$
simply  as a curve
over $B$, and  we denote it by $C_B.$ Let $D \hookrightarrow C$ be a relative Cartier divisor such that every geometric $B$-fiber of $D$ is nonempty and reduced.

We fix once and for all two integers $n>0$ -the rank- and $d \in \mathbb{Z}$
-the degree-. 
We assume  that $n$ and $d$ are  coprime.

For every morphism of schemes  $S\to B,$  we denote the corresponding fiber product  $C\times_B S \to S,$ simply by $\pi_S: C_S \to S$. For example if $\overline{b}\to b \to  B$ is a geometric point over a Zariski point $b$ of $B$,
then $C_b$ denotes the fiber over $b$ and $C_{\overline{b}}$ denotes the corresponding  geometric fiber.

We will often work over the base $\mathbb{A}^1_B = \underline{\rm{Spec}}_{B}(\mathcal{O}_{B}[t])$, equipped with the action of the multiplicative group scheme  $\mathbb{G}_{m,B}$ that assigns $t$ weight $1$. For any $\mathbb{A}^1_{B}$-scheme $S$, we shall denote by $t_{S}$ the global section of the structure sheaf $\mathcal{O}_{S}$ obtained by pulling back $t$.

If $Spec(A) \to \mathbb{A}^1_{B}$ is a morphism from an affine scheme, we might abuse notation and write $\pi_{A}: C_{A} \to Spec(A)$ and $t_{A} \in A$ as replacement of the notation described above.

If $S\to B$ is a morphism of schemes and $\mathcal G$ is an object over $B$ (a scheme over $B$,
an $\mathcal{O}_B$-module,  an $\mathcal{O}_X$-module with $X$ a scheme over $B$, etc.), then $\mathcal{G}_S$ denotes the pulled-back object via the morphism $S \to B$.

\begin{subsection}{The Hodge moduli stack/space}\label{subsection:Hodge with poles}

\begin{defn}\label{def:Hodge}
We denote by $\mathcal{M} {\rm Hodge}_{C_B}^{ss}   \to \mathbb{A}^1_B$ the moduli stack of (slope) semistable rank $n$ and degree $d$ logarithmic $t$-connections with poles along $D$. As a pseudofunctor, it sends an $\mathbb{A}^1_{S}$-scheme $S \to \mathbb{A}^1_{S}$ to the groupoid of pairs $(\mathcal{F}, \nabla)$, where:
\begin{enumerate}[(a)]
    \item $\mathcal{F}$ is a vector bundle of rank $n$ on $C_S$ such that its restriction to each geometric fiber of the morphism $C_{S} \to S$ has degree $d$.
    \item $\nabla: \mathcal{F} \to \mathcal{F} \otimes_{\mathcal{O}_{C_S}} \omega_{C_S/S}(D_S)$ is a logarithmic $t_{S}$-connection with (at most simple)
    poles allowed at the pull-back $D_S$ of $D$.
    \item The restriction of the pair $(\mathcal{F}, \nabla)$ to each geometric fiber $C_{s}$ of the morphism $C_{S} \to S$ is a semistable $t_{s}$-connection.
\end{enumerate}
\end{defn} 

Since $n$ and $d$ are  coprime, every semistable pair $(\mathcal{F}, \nabla)$ is in fact stable. 

When dealing with $t$-connections, 
one uses the sheaf of rings on $C_B \times_{B}\mathbb{A}^1_{B}$ given by the Rees degeneration with respect to the order filtration of the enveloping algebra of differential operators associated with the Lie algebroid of relative vector fields on $C \to B$ (see \cite[$\tau$-connections, pg. 87]{Simpson-repnI} in characteristic $0$ and \cite[\S2.2]{langer2014semistable} for the enveloping algebra over a general base).

The degeneration takes the following shape: at the section $0_B,$ it is the push-forward of the  algebra of functions on the total space of the relative cotangent bundle $\omega_{C_B/B}$, whose modules are Higgs bundles on $C_B;$ at the section $1_B,$ it is the sheaf of crystalline differential operators, whose modules are flat connections
on $C_B/B$.

In this paper, we deal with poles,
so that instead we use the Rees degeneration of the universal enveloping algebra of differential operators associated with the Lie algebroid of relative vector fields 
$(\omega_{C_B/B}(D))^\vee$ on $C_B \to B$ vanishing at the poles $D$. 

\begin{notn} \label{notn:tau_B}
We denote by $\tau_B: M{\rm{Hodge}}_{C_B}^{ss} \to \mathbb{A}^1_B$ the corresponding quasiprojective moduli space of rank $r$ and degree $d$ semistable logarithmic $t$-connections constructed
over a base $B$ a $\mathbb{C}$-scheme of finite type in \cite{Simpson-repnI}  by using Geometric Invariant Theory, and more recently over a general Noetherian base $B$ in \cite{langer-moduli-lie-algebroids}.
\end{notn}

The natural  $\mathbb{A}^1_B$-morphism $\mathcal{M}{\rm{Hodge}}_{C_B}^{ss} \to M{\rm{Hodge}}_{C_B}^{ss}$
exhibits $M{\rm{Hodge}}_{C_B}^{ss}$ as an adequate moduli space (as in \cite{alper_adequate}). In \S\ref{subs:redsmst},  by using the coprimality of rank and degree, we  show that this is a good moduli space (as in \cite{alper-good-moduli}).

\begin{notn} \label{notn:vHodgeB}
We denote the natural morphism obtained by composing with the projection onto $B$ by $v_{Hodge,B}: M{\rm{Hodge}}_{C_B}^{ss} \to B$.
\end{notn}

Both the stack $\mathcal{M}{\rm{Hodge}}_{C_B}^{ss}$ and the scheme $M{\rm{Hodge}}_{C_B}^{ss}$ are of finite type over $\mathbb{A}^1_B$. The fact that the stack $M{\rm{Hodge}}_{C_B}^{ss}$ is locally of finite type follows from the GIT setup (or by e.g. \cite[Prop 2.2.2]{torsion-freepaper}). On the other hand $M{\rm{Hodge}}_{C_B}^{ss}$ is quasiprojective over $\mathbb{A}^1_B$, and so it follows that the stack $\mathcal{M}{\rm{Hodge}}_{C_B}^{ss}$ with moduli space $M{\rm{Hodge}}_{C_B}^{ss}$ is also quasicompact. 

The group scheme $\mathbb{G}_{m,B}$ acts on $M{\rm{Hodge}}_{C_B}^{ss}$ by scaling the universal logarithmic $t$-connection; this induces an action of $\mathbb{G}_{m,B}$ on the moduli space $M{\rm{Hodge}}_{C_B}^{ss}$. Both morphisms $M{\rm{Hodge}}_{C_B}^{ss} \to \mathbb{A}^1_B$ and $\mathcal{M} {\rm Hodge}_{C_B}^{ss}   \to \mathbb{A}^1_B$ are $\mathbb{G}_{m,B}$-equivariant.

\end{subsection}
 \begin{subsection}{The Higgs and de Rham moduli stacks/spaces}\label{subsection:Higgs with poles}
  \begin{defn}
  The Higgs moduli stack $\mathcal{M}{\rm{Higgs}}_{C_B}^{ss}$ is defined by the following Cartesian diagram
\begin{figure}[H]
\centering
\begin{tikzcd}
  \mathcal{M}{\rm{Higgs}}_{C_B}^{ss} \ar[r, symbol= \hookrightarrow] \ar[d] &  \mathcal{M}{\rm{Hodge}}_{C_B}^{ss}
  \ar[d] 
  \\ 
  B (=0_B) \ar[r, symbol= \xhookrightarrow{t_B=0}] &  \mathbb{A}^1_B.
\end{tikzcd}
\end{figure}
For every $B$-scheme $S\to B$, it classifies pairs $(\mathcal{F}, \nabla)$ with
$\mathcal{F}$ a vector bundle of rank $r$ and degree $d$ on $C_S$ and $\nabla$ a logarithmic Higgs field
$\nabla:\mathcal{F} \to \mathcal{F} \otimes_\mathcal{O_{C_S}} \omega_{C_S/S}(D_S)$ with poles at $D_S.$
 \end{defn}
  \begin{notn} \label{notn:vhiggsB}
   We denote by $v_{Higgs,B} : M{\rm{Higgs}}_{C_B}^{ss} \to B$ the quasiprojective moduli space of semistable logarithmic Higgs bundles with poles along $D$ constructed using GIT as recalled in Notation \ref{notn:tau_B}.
  \end{notn}
  Since the formation of good moduli spaces is compatible with arbitrary base change, by Lemma \ref{lemma:gm gerbe} the canonical morphism for moduli spaces below is an isomorphism (see also \cite[Thm. 1.1]{langer-moduli-lie-algebroids}):
 \begin{equation}\label{eq:higermod}
 \xymatrix{
  M{\rm{Higgs}}_{C_B}^{ss} 
  \ar[r]^-\simeq
  &
 ( M{\rm{Hodge}}_{C_B}^{ss})_{0_B}.
 }
 \end{equation}
 
 \begin{defn}
  The de Rham stack $\mathcal{M}{\rm de \, Rham}_{C_B}^{ss}$ is defined by the following Cartesian diagram
\begin{figure}[H]
\centering
\begin{tikzcd}
  \mathcal{M}{\rm de \, Rham}_{C_B}^{ss} \ar[r, symbol= \hookrightarrow] \ar[d] &  \mathcal{M}{\rm{Hodge}}_{C_B}^{ss}
  \ar[d] 
  \\ 
  B (=1_B) \ar[r, symbol= \xhookrightarrow{t_B=1}] &  \mathbb{A}^1_B.
\end{tikzcd}
\end{figure}
It classifies pairs $(\mathcal{F}, \nabla)$ with $\nabla$ a logarithmic connection with poles at $D$.
 \end{defn}
 
  \begin{notn} \label{notn:vdrB}
We denote by $v_{{\rm de \, Rham},B} : M{\rm de \, Rham}_{C_B}^{ss} \to B$ the corresponding quasiprojective moduli space of semistable logarithmic connections with poles along $D$ constructed using GIT.
  \end{notn}
  When  restricted over the open $\mathbb{G}_{m,B} \subseteq \mathbb{A}^1_B,$ both  the Hodge moduli stack and space are fiber products over by   $\mathbb{G}_{m,B}$ over  $B$
of the de Rham moduli:  a $t$-connection with $t$ invertible $(\mathcal F, \nabla)$
can be rescaled to a connection $(\mathcal F, \frac{1}{t} \nabla).$ This trivialization is $\mathbb{G}_{m,B}$-equivariant:
\begin{equation}\label{eq:trivl}
\xymatrix{
(M{\rm{Hodge}}_{C_B}^{ss})_{\mathbb{G}_{m,B}} 
\ar[r]^-\simeq
&
M{\rm de \, Rham}_{C_B}^{ss} \times_B \mathbb{G}_{m,B},
&
(\mathcal F, \nabla) \mapsto ((\mathcal F, \frac{1}{t} \nabla),t).
}
\end{equation}

In view of this triviality over $\mathbb{G}_{m,B},$
one can show directly that there is an isomorphism 
  \begin{equation}\label{eq:fiberderh}
 \xymatrix{
  M{\rm de \, Rham}_{C_B}^{ss} 
  \ar[r]^-\simeq
  &
 ( M{\rm{Hodge}}_{C_B}^{ss})_{1_B}.
 }
 \end{equation}
 This isomorphism (\ref{eq:fiberderh}) holds without having to assume the coprimality 
of rank and degree.
At present, we ignore if, absent the coprimality assumption, the same is true for the Higgs moduli space. 

\end{subsection}

\begin{subsection}{Hitchin-type morphisms and perverse filtrations}\label{sec:hitchmo}

In this section, we use \cite[esp. \S4,5]{langer-moduli-lie-algebroids}  as a reference, but we employ a notation closer to the one in \cite[esp. \S2.2]{decataldo-zhang-completion}.

\begin{notn}
The Higgs moduli space comes equipped with the Hitchin $B$-morphism:
\begin{equation}\label{eq:hitchinmo}
\xymatrix{
h_{{\rm{Higgs}},B} : M{\rm{Higgs}}_{C_B}^{ss} 
\ar[r]
&
A(C_B),
} 
\end{equation}
with target the vector bundle on $B$ with fibers 
$A(C_b):= \oplus_{j=1}^n H^0(C_b, (\omega_{C_b/b}(D_b))^{\otimes j}),$
and which assigns to a Higgs bundle its characteristic polynomial.
\end{notn}

\begin{notn}
If $B$ has characteristic $p>0,$ then there is the Hodge-Hitchin $B$-morphism: (see 
\S\ref{appdx}) 
\begin{equation}\label{eq:hodgemo1}
\xymatrix{
h_{{\rm{Hodge}},B} :   M {\rm{Hodge}}_{C_B}^{ss} 
\ar[r]
&
A(C_B^{(B)})   \times_{B} \mathbb{A}^1_B,
} 
\end{equation}
where $C_B^{(B)}= B\times_{B,fr_B} C_B$  (absolute Frobenius $fr_B: B \to B$) is the Frobenius twist of $C_B$ relative to $B,$
and the morphism assigns to a $t$-connection with poles the $p$-th root of the characteristic polynomial of its $p$-curvature. 
\end{notn}

Both the Hitchin morphism $h_{{\rm{Higgs}},B}$ \eqref{eq:hitchinmo} and the Hodge-Hitchin morphism $h_{{\rm{Hodge}},B}$ \eqref{eq:hodgemo1} are proper by \cite[Thm. 5.9]{langer-moduli-lie-algebroids}.

\begin{notn}
By restricting the Hodge-Hitchin morphism to connections with poles (i.e. $t=1$) we obtain the 
de Rham-Hitchin $B$-morphism (a.k.a. the $p$-Hitchin morphism; it is clearly proper):
\begin{equation}\label{eq:hodgemo2}
\xymatrix{
h_{{\rm{de\,Rham}},B} :   M {\rm{de\,Rham}}_{C_B}^{ss} 
\ar[r]
&
A(C_B^{(B)}).
} 
\end{equation}
\end{notn}

 By restricting the Hodge-Hitchin morphism to  logarithmic Higgs bundles, we obtain
 the Hitchin morphism to $A(C_B)$ post-composed with the relative Frobenius $B$-morphism $Fr_{A(C_B)};$
 see \cite[Lemma 4.3]{decataldo-zhang-completion}, which, while stated and proved in the case without poles, can be proved in the same way in the logarithmic case.

Each of these Hitchin-type morphisms induces an increasing 
filtration, called the perverse (Leray) filtration (cf. \cite[\S2.1]{decataldo-cambridge}) on the respective  $\overline{\mathbb{Q}}_\ell$-adic cohomology rings (with some decorations omitted)
\begin{equation}\label{eq:pf}
(H^*(M {\rm{Higgs}}), P_{\rm Higgs}), \,
(H^*(M {\rm{Hodge}}), P_{\rm Hodge}), \,
(H^*(M {\rm de \, Rham}), P_{\rm de\,Rham}).
\end{equation}

Since the relative Frobenius morphism $Fr_{A(C_B)}$ is a universal homeomorphism,
 the perverse filtrations on the cohomology ring $H^*(M {\rm{Higgs}})$ associated with the Hitchin morphism and with the Hodge-Hitchin morphism
 restricted to the Higgs moduli space coincide.
\end{subsection}
\end{section}

\begin{section}{Specialization morphisms}

The goal of this section is to remind the reader of the notions of specialization morphism and of its filtered counterparts given in \cite{decataldo-cambridge}, so that the two applications of Theorem \ref{thm:smooth} we give in this paper, namely, Theorems \ref{thm:spk}  and \ref{thm:spv}, can be stated.

The typical setup for  specialization morphisms is the one of a morphism to a DVR.  In Theorem \ref{thm:spk},
the DVR in question is the Henselization of the local ring at the origin of the affine line over an algebraically closed field $k,$ and the morphism to it is
the restriction of the structural morphism $\tau_k$
of the Hodge moduli space
in Notation \ref{notn:tau_B}. The specialization morphism
then relates the cohomology rings of the Higgs and de Rham
moduli space, with the one of the Hodge moduli space acting as an intermediary.

In Theorem \ref{thm:spv},
the DVR is arbitrary,
and the morphism is the morphism $\tau_B$
in Notation \ref{notn:tau_B}.
The specialization morphism
then relates the cohomology rings of the Hodge moduli spaces over the geometric closed and generic points.

\begin{subsection}{(Filtered) Specialization morphisms}\label{subsec:specmor}
A reference for this section is \cite{decataldo-cambridge}. We will freely employ the associated formalism of nearby/vanishing cycles \cite[XIII]{SGA7-2}. Specialization morphisms appear in the statements of Theorems
\ref{thm:spk} and \ref{thm:spv}.

  Let $(A, a, \overline{\alpha})$ be the spectrum of strictly Henselian DVR, together with its closed and geometric point 
  $i_a=i:a\to A,$ open point $\alpha,$ and a choice
  of geometric generic point $j_{\overline{\alpha}}= \overline{j}: \overline{\alpha} \to A$ induced by a separable closure of $k(\alpha).$ Fix a prime $\ell$ that is invertible in the residue field of $a$.

Let  $Y$ be a scheme and let $v_Y=v:Y\to A$ be a separated morphism of finite type. 
Let $D^b_c(Y)$  be the $\overline{\mathbb{Q}}_\ell$-constructible derived category
on $Y$. 
 We have the distinguished triangle 
   of functors $(i^*[-1],\psi_v [-1],\phi_v)): D^b_c (Y) \to D^b_c(Y_a)$, where $i:Y_{a} \to X$ is the closed embedding of the special fiber,
   and $\psi_v$ and $\phi_v $ are the nearby and vanishing cycle functors, with values supported both on $Y_a$
   and on $a$ depending on the context. In our notation, 
   $\psi_v [-1]$ and $\phi_v$ are $t$-exact.
   We have the base change morphism $i^*v_* \to v_*i^*.$

   Let $G \in D^b_c(Y)$. 
   We have the natural morphisms in  cohomology  (cf. \cite[II-6 p. 23]{SGA4.5}): (we omit pull-back notation on $G$)
   \begin{equation}\label{eq:twors}
   \xymatrix{
  H^*(Y_{a},G) 
  &
  H^*(Y=Y_A,G) 
  \ar[l]_-{r_a}
  \ar[r]^-{r_{\overline{\alpha}}}
  &
  H^*(Y_{\overline{\alpha}},G),
  }
  \end{equation}
  where we employ the letter $r$ to denote the pull-back/restriction in cohomology via the evident morphisms.
  If $G={\overline{\mathbb{Q}}}_{\ell,Y},$ then these are  morphisms of cohomology rings.
  
  The reference \cite[\S1-3]{decataldo-cambridge} works over the complex numbers with the classical topology. 
  As it is pointed out in the introduction to \cite{decataldo-cambridge} and  in \cite[\S 5]{decataldo-zhang-completion}, 
 \cite[\S1-3]{decataldo-cambridge} remain valid, with only calligraphic changes, in our set-up over a DVR. 
  
\begin{defn}[{\cite[Defn. 3.1.3]{decataldo-cambridge}}]
We say that the specialization morphism $sp_v(G)$ is defined if the base change morphism $bc^{i^*v_*}$ in \cite[Def. 3.1.3, based on diagram (42)]{decataldo-cambridge} is an isomorphism.
  In this case, the pull-back morphism $r_{a}$ 
  (\ref{eq:twors}) is invertible,
  in which case, we define the specialization morphism:
  \begin{equation}\label{eq:defsp}
  \xymatrix{
  sp_v:= sp_v(G)=   r_{\overline{\alpha}}   \circ  {{r_{\overline{a}}}}^{-1} :
   H^*(Y_{a},G)  
   \ar[r]
   &
    H^*(Y_{\overline{\alpha}},G).
   }
  \end{equation}
\end{defn}  
  
  If $v$ is proper, then the specialization morphism is defined by proper base change.
  If $v$ is not proper, then the specialization morphism can fail to be defined: e.g.
  when $v: Y := A \setminus \{a\} \to A.$
  
  For what follows, we refer to \cite[\S5.2 (Rectified perverse $t$-structure over a DVR)]{decataldo-zhang-completion}.
  In particular, we have O. Gabber's rectified perverse  $t$-structure on the $\overline{\mathbb{Q}}_\ell$-constructible derived category on $Y$.  One way to think of it is to view it, in first approximation,
 as gluing perverse sheaves on $Y_a$ to perverse sheaves on $Y_{\overline{\eta}}$ shifted by $[1].$
  We thus have the notion of the perverse filtration $P$ on the cohomology  $H^*(Y,G)$
  of a $\overline{\mathbb{Q}}_\ell$-constructible complex $G$  on $Y$.
  
  Let $f:X \to Y$ be a separated morphism of finite type. Let $v_X:= v_Y\circ f:   X \to A.$
  There is the notion of perverse Leray filtration $P^f$ relative to the morphism $f$ on the cohomology of 
  a $\overline{\mathbb{Q}}_\ell$-constructible complex $F$ on $X,$ which is defined to be the perverse filtration on the cohomology of the derived direct image $f_*F$  on $Y$, i.e.
  $(H^*(X,F), P^f):= (H^*(Y,f_*F),P).$ 
  
 When dealing with the cohomology ring of $X$,  for convenience, we number the perverse Leray filtration so that $1$ lands in the $0$-th graded subquotient.

  Let $F$ be a $\overline{\mathbb{Q}}_\ell$-constructible complex  on $X$.
  \cite[Def. 3.3.3, based on diagram (55)]{decataldo-cambridge} defines  the notion
  of filtered specialization morphism  for $F$ on $X$ and for the composition $X\to Y \to A:$
  \begin{equation}\label{eq:sppf}
  sp_v:  (H^*(X_a, F), P^{f_a}) \to
  (H^*(X_{\overline{\alpha}},F), P^{f_{\overline{\alpha}}})
  \end{equation}
    relative to the perverse Leray filtrations $P^{f_a}$ and $P^{f_{\overline{\alpha}}}.$
    Our notation here differs slightly from \cite{decataldo-cambridge},
    and we emphasize that we are considering  the specialization morphism for the morphism
    $v_Y : Y \to A$ for the derived direct image complex $f_*F$ on $Y,$ filtered by the perverse $t$-structure on $Y.$
    The special case $f={\rm Id}_Y$ gives the notion of filtered specialization morphism
    for $G$ on $Y$
    for the morphism $Y\to A;$  see \cite[(49)]{decataldo-cambridge}.
\begin{defn}[cf. {\cite[Defn. 3.2.3]{decataldo-cambridge}}]\label{filtsp}
We say that the filtered specialization morphism is  defined if  the two sequences, labelled by the integers, of  base change arrows on the left-hand-side column of \cite[(55)]{decataldo-cambridge} are invertible. In this case, we obtain the filtered morphism (\ref{eq:sppf}).
\end{defn}   
  
  \begin{remark} \label{remark: shfit by one}
  If the filtered specialization morphism is defined, then so is the specialization morphism,
  which is then the morphism underlying the filtered version.
  In the special case when the morphisms of type $\delta$ are isomorphisms, so that $r_a$ 
  is a filtered isomorphism, then we have $P^{f_a}(1) \xrightarrow{\sim} P^f \to P^{f_{\overline{\eta}}} (1)$.
  \end{remark}
  
  \end{subsection}
  
  \begin{subsection}{Specialization for  the Hodge moduli space over a field}\label{subsection:sphodge}
The purpose of this section is to introduce and discuss the commutative diagram
  (\ref{eq:qapl}), which we need to state (and to prove) Theorems \ref{thm:spk}. 
  
\begin{context}
 Let $B=Spec(k)$ be a separably closed field and let $C_k$ be our curve.
Consider the Hodge moduli space $\tau_k: M{\rm{Hodge}}_{C_k}^{ss} \to \mathbb{A}^1_k,$ together with its fibers $M{\rm{Higgs}}_{C_k}^{ss}$ over $0_k$, and $M\rm{de\,Rham}_{C_k}^{ss}$ over $1_k$.
\end{context} 

\begin{notn}
We denote by the same symbol $\tau_k$  the morphism obtained by base changing $\tau_k$
  via the morphism
  $Spec (\widetilde{\mathcal{O}_{0_k,\mathbb{A}^1_k}}) \to \mathbb{A}^1_k,$
  where  $\widetilde{\mathcal{O}_{0_k,\mathbb{A}^1_k}}$ is the strict Henselization
  of the local ring at the origin $0_k \in \mathbb{A}^1_k.$
\end{notn}

Let $\overline{\infty} \to \infty \in \mathbb{A}^1_k,$ be a fixed geometric generic point of the affine line $\mathbb{A}^1_k$
  induced by a choice of a separable closure of $k(\infty).$
  We  have
  the  nearby/vanishing-cycle functors $\psi_{\tau_k}$ and $\phi_{\tau_k}$ for this new morphism $\tau_k.$

Because of  the product structure  of the Hodge moduli space over $\mathbb{G}_{m,k} \subset \mathbb{A}^1_k,$ we have canonical isomorphisms:
\begin{equation}\label{eq:can identif}
  H^* (0_k, \psi_{\tau_k}{\tau_k}_* \overline{\mathbb{Q}}_\ell) =
   H^*((M{\rm{Hodge}}_{C_k}^{ss})_{\overline{\infty}}) \stackrel{\simeq}\leftarrow
   H^*(M{\rm de \, Rham}_{C_k}^{ss}),
  \end{equation}
where: the first equality is the classical and general fact that the cohomology 
of the nearby cycle functor applied to the derived direct image via $\tau_k$
  agrees with the cohomology of the geometric generic fiber; 
  the second identification is due to the aforementioned product 
  structure, in view of the natural morphism $\overline{\infty} \to k \to 1_k.$

  We have a commutative diagram, where the arrows are the morphisms induced by restriction/pull-back:
\begin{equation}\label{eq:qapl}
  \xymatrix{
  H^*(M{\rm{Higgs}}_{C_k}^{ss}) 
  \ar[d]^-=
  &
  H^*(M{\rm{Hodge}}_{C_k}^{ss})
  \ar[d] 
  \ar[r]^-{\rho_{1_k}}
  \ar[l]_-{\rho_{0_k}}
 &
 H^*(M{\rm de \, Rham}_{C_k}^{ss})
 \ar[d]^-\simeq_{(\ref{eq:can identif})}
\\
H^*( (M{\rm{Hodge}}_{C_k}^{ss})_{0_k}   )
 &
 H^*((M{\rm{Hodge}}_{C_k}^{ss})_{\widetilde{\mathcal{O}_{0_k, \mathbb{A}^1_k}}}) 
  \ar[l]_-{r_{0_k}} 
  \ar[r]^-{r_{\overline{\infty}}}
 &
 H^*((M{\rm{Hodge}}_{C_k}^{ss})_{\overline{\infty}}).
   }
\end{equation}

The  specialization morphism $sp_{\tau_k}$  associated with $\tau_k$ is defined 
if the  morphism  $r_{0_k}$ is an isomorphism so that we can set, by using the identification (\ref{eq:can identif}),
special to our situation:
\begin{equation}\label{eq:loij}
sp_{\tau_k}:=   r_{\overline{\infty}}  \circ r_{0_k}^{-1} :  H^*(M{\rm{Higgs}}_{C_k}^{ss})  \to H^*(M{\rm de \, Rham}_{C_k}^{ss}).
\end{equation}

The morphism $\tau_k$ is not proper, so that it is a priori unclear that the specialization morphism is defined. \cite[Thm. 3.5]{decataldo-zhang-nahpostive} shows that the restriction morphisms  $\rho_{0_k}$  and $\rho_{1_k}$
in (\ref{eq:qapl})
are isomorphisms in the case of $\text{char} (k)  >0,$  without poles, also under some suitable coprimality conditions.
The same proof works in the case with poles; see the proof of  Theorem \ref{thm:spk}.
 On the other hand, \cite{decataldo-zhang-nahpostive} does not address explicitly the existence and  properties of
the (filtered) specialization morphism; 
 Theorem  \ref{thm:spk} puts a remedy to  these omissions.
 \begin{thm}\label{thm:spk}
 Let $B=k$ be a separably closed field and let $C_k$ be our curve.  Assume that the rank $n$ and the degree $d$ are coprime, and the fibers of the divisor $D$ are nonempty.
 Then all the  morphisms in (\ref{eq:qapl}) are isomorphisms of cohomology rings,
 the specialization morphism $sp_{\tau_k}$ (\ref{eq:loij}) is defined, it is an isomorphism
 of cohomology rings
 and we have an identification 
 \begin{equation}\label{eq:loik}
sp_{\tau_k} = \rho_{1_k} \circ {\rho_{0_k}}^{-1}.
 \end{equation}
If, in addition,  ${\rm char} (k)>0,$ then all the morphisms in (\ref{eq:qapl}) and  (\ref{eq:loij})
are filtered isomorphisms for the respective perverse filtrations as in \S\ref{subsec:specmor}.
 \end{thm}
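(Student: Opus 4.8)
The plan is to deduce everything from three inputs: the smoothness and geometric integrality of $\tau_k\colon M{\rm Hodge}_{C_k}^{ss}\to\mathbb{A}^1_k$ (Theorem \ref{thm:smooth}); the fact that $M{\rm Hodge}_{C_k}^{ss}$ is semiprojective, i.e. the $\mathbb{G}_{m,k}$-action contracting $t$ to $0$ has limits which, being $\mathbb{G}_{m,k}$-fixed, land in the fixed locus (classical over $\mathbb{C}$, and in positive characteristic a consequence of the properness of the Hodge--Hitchin morphism of \S\ref{sec:hitchmo} together with the positivity of the $\mathbb{G}_m$-weights on its target $A(C_k^{(k)})\times\mathbb{A}^1_k$); and a good compactification of $\tau_k$ over $\mathbb{A}^1_k$, recalled and constructed in \S\ref{subs:1100}, to which the (filtered) specialization formalism of \S\ref{subsec:specmor} and \cite{decataldo-cambridge} applies. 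As a first step, the restriction morphisms $\rho_{0_k}$ and $\rho_{1_k}$ of (\ref{eq:qapl}) are isomorphisms of cohomology rings. This is \cite[Thm. 3.5]{decataldo-zhang-nahpostive} in positive characteristic and without poles, and its proof — which uses only smoothness, semiprojectivity, and a good compactification, not any form of non-abelian Hodge theory — applies verbatim in arbitrary characteristic and in the logarithmic setting, given Theorem \ref{thm:smooth} and \S\ref{subs:1100}. Concretely, $\rho_{0_k}$ is an isomorphism because semiprojectivity makes restriction to the $\mathbb{G}_{m,k}$-fixed locus a cohomology isomorphism for both $M{\rm Hodge}_{C_k}^{ss}$ and $M{\rm Higgs}_{C_k}^{ss}$, and the two fixed loci coincide; $\rho_{1_k}$ is an isomorphism by combining the product structure (\ref{eq:trivl}) over $\mathbb{G}_{m,k}$ with the good compactification. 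The left vertical map in (\ref{eq:qapl}) is the identity by (\ref{eq:higermod}) and the right one is the isomorphism (\ref{eq:can identif}).

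Next I would show that the specialization morphism $sp_{\tau_k}$ is defined, i.e. that the restriction $r_{0_k}$ in (\ref{eq:qapl}) is an isomorphism, equivalently that the base change morphism $bc^{i^*v_*}$ of \cite[Defn. 3.1.3]{decataldo-cambridge} is invertible for $v=\tau_k$ and $i=i_{0_k}$. Since $\tau_k$ is not proper this is not automatic; the remedy is the good compactification $\overline{\tau}_k\colon\overline{M{\rm Hodge}}_{C_k}^{ss}\to\mathbb{A}^1_k$ of \S\ref{subs:1100}: combining the localization triangles on $\overline{M{\rm Hodge}}_{C_k}^{ss}$ attached to its boundary divisor with proper base change for $\overline{\tau}_k$ and for the (proper) restriction of $\overline{\tau}_k$ to the boundary reduces the invertibility of $bc^{i^*v_*}$ for $\tau_k$ to the analogous — automatic — statements for these two proper morphisms, exactly as in \cite[\S3]{decataldo-cambridge}. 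Hence $r_{0_k}$ is an isomorphism. Now chase (\ref{eq:qapl}): commutativity of the left square, together with ``$\rho_{0_k}$ an isomorphism'', ``$r_{0_k}$ an isomorphism'' and ``left vertical $=$ identity'', forces the middle vertical map to be an isomorphism; commutativity of the right square, with ``$\rho_{1_k}$ an isomorphism'' and ``right vertical an isomorphism'', then forces $r_{\overline{\infty}}$ to be an isomorphism. Therefore every arrow in (\ref{eq:qapl}) is an isomorphism of cohomology rings, $sp_{\tau_k}=r_{\overline{\infty}}\circ r_{0_k}^{-1}$ is defined and is a ring isomorphism, and unwinding the two squares through the identifications (\ref{eq:higermod}) and (\ref{eq:can identif}) yields $sp_{\tau_k}=\rho_{1_k}\circ\rho_{0_k}^{-1}$, which is (\ref{eq:loik}).

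For the final assertion, in characteristic $p>0$, I would run the same argument with the perverse filtrations (\ref{eq:pf}), applying the filtered specialization formalism of \cite[\S3.3]{decataldo-cambridge} (Definition \ref{filtsp}, Remark \ref{remark: shfit by one}) relative to the Hodge--Hitchin morphism $h_{{\rm Hodge},k}$ of \S\ref{sec:hitchmo}, which is proper. Its restriction to the special fiber $0_k$ is the Hitchin morphism post-composed with the universal homeomorphism $Fr_{A(C_k)}$, so the associated perverse Leray filtration on $H^*(M{\rm Higgs}_{C_k}^{ss})$ is $P_{\rm Higgs}$; its restriction to the geometric generic fiber is the de Rham--Hitchin ($p$-Hitchin) morphism, so the associated perverse Leray filtration on $H^*(M{\rm de \, Rham}_{C_k}^{ss})$ is $P_{\rm de\,Rham}$; and the perverse Leray filtration over the trait is $P_{\rm Hodge}$. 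One needs the good compactification of \S\ref{subs:1100} to be compatible with a proper extension of $h_{{\rm Hodge},k}$ over $\mathbb{A}^1_k$, which is built there; then the filtered base change arrows of \cite[(55)]{decataldo-cambridge} are invertible for the same reason as in the unfiltered case, and the $\delta$-morphisms of Remark \ref{remark: shfit by one} are isomorphisms because $\tau_k$ is smooth (Theorem \ref{thm:smooth}), so that the vanishing cycles governing them vanish. Combining this with the filtered refinement of the isomorphisms $\rho_{0_k}$ (the $\mathbb{G}_{m,k}$-contraction being compatible with $h_{{\rm Hodge},k}$) and $\rho_{1_k}$ (the filtered version of the argument of \cite[Thm. 3.5]{decataldo-zhang-nahpostive}), the diagram chase of the previous paragraph takes place in filtered rings: $r_{0_k}$ and $r_{\overline{\infty}}$ are filtered isomorphisms, and hence so is $sp_{\tau_k}=r_{\overline{\infty}}\circ r_{0_k}^{-1}=\rho_{1_k}\circ\rho_{0_k}^{-1}$, the shifts by $1$ of Remark \ref{remark: shfit by one} relating $P_{\rm Hodge}$ to $P_{\rm Higgs}$ and to $P_{\rm de\,Rham}$ cancelling in the composition.

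The main obstacle is the construction of the good compactification. One must produce, over $\mathbb{A}^1_k$, a proper $\overline{\tau}_k$ extending $\tau_k$ with a sufficiently controlled boundary (``good'' in the sense of \cite{decataldo-cambridge}, \cite{decataldo-zhang-completion}) and, in positive characteristic, compatible with a proper extension of the Hodge--Hitchin morphism — a compactification not previously available for Hodge moduli spaces and carried out in \S\ref{subs:1100}. The remaining delicate points, also resting on the smoothness of $\tau_k$ and on the compatibility of nearby/vanishing cycles with the proper Hitchin-type morphisms, are the verification that the perverse Leray filtrations are suitably constant along the $t$-line (so that the filtered base change arrows of \cite[(55)]{decataldo-cambridge} make sense and are invertible) and the $\delta$-isomorphisms.
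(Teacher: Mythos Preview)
Your proposal is correct and takes essentially the same approach as the paper: the top row of (\ref{eq:qapl}) via the method of \cite[Thm.~3.5]{decataldo-zhang-nahpostive}, the bottom row via the compactification $\overline{\tau_k}$ of \S\ref{subs:1100} together with \cite[Prop.~3.4.2.(A)]{decataldo-cambridge}, then a diagram chase for the remaining arrows, with the filtered refinement in positive characteristic handled by the filtered version of the same proposition. The paper packages the key vanishing-cycle input as Lemma \ref{lm:phib} (namely $\phi\,\overline{\mathbb{Q}}_\ell=\phi\,a_!a^!\overline{\mathbb{Q}}_\ell=\phi\,b_*b^*\overline{\mathbb{Q}}_\ell=0$ on the compactification, using the simple normal crossing structure of the boundary and a descent through the $\mathbb{G}_m$-quotient), which is exactly the hypothesis needed for \cite[Prop.~3.4.2.(A)]{decataldo-cambridge}; your localization-triangle sketch is getting at the same verification, though you should note that the vanishing on the boundary piece is not automatic from properness alone but requires this SNC/descent argument.
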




\end{subsection}

\begin{subsection}{Specialization for  the Hodge moduli space over a  DVR}\label{subsection:sphodged}
  The purpose of this section is to introduce and discuss the commutative diagram
  (\ref{eq:iso055}), which we need to state (and to prove) Theorem \ref{thm:spv}.

\begin{context}Let 
$(B,s,\overline{\eta})$  be the spectrum of a strictly Henselian   DVR with closed geometric  point 
  $s \in B$ and a choice of a geometric generic point $\overline{\eta} \to \eta \in B$.
  \end{context}

   The morphisms (\ref{notn:vHodgeB}), (\ref{notn:vhiggsB}) and (\ref{notn:vdrB})
   of type $v_{?,B}: M\text{?}\to B$ gives rise to possible specialization morphisms
   that we denote by $sp_{v_{{\text ?},B}}.$  As usual, each one is defined if and when the associated  pull-back/restriction morphism,
   denoted $r_{s},$ is an isomorphism, so that we can set
   $sp_{v_{?, B}}:= r_{\overline{\eta}} \circ r_{s}^{-1}: H^*(M\text{?}_{s}) 
   \to H^* (M\text{?}_{\overline{\eta}})$.  There are three potential versions of such specialization morphisms 
   of type $sp_{v_{\text{?},B}}$: the Hodge, the Higgs and the de Rham version:
   \begin{equation}\label{eq:3vb}
   sp_{v_{{\rm{Hodge}},B}}, \; sp_{v_{{\rm{Higgs}},B}}, \;sp_{v_{{\rm de \, Rham},B}}.
   \end{equation}
   
  Moreover, in the Hodge case, according to \S\ref{subsection:sphodge}, esp. (\ref{eq:qapl}), 
  we  have the possible  specialization morphisms $sp_{\tau_{s}}$
   and $sp_{\tau_{\overline{\eta}}}$ associated with the structural morphisms
   $\tau_{s}: M{\rm{Hodge}}^{ss}_{C_{s}}  \to \mathbb{A}^1_{s}$ and 
   $\tau_{\overline{\eta}}: M{\rm{Hodge}}^{ss}_{C_{\overline{\eta}}} \to \mathbb{A}^1_{\overline{\eta}}$.

We summarize the discussion above  via the natural commutative diagram of restri\-ctions/pull-backs and specializations,
 all of which are morphisms of cohomology rings:
 \begin{equation}\label{eq:iso055}
\xymatrix{
H^*(M{\rm{Higgs}}^{ss}_{ C_{s}}) 
\ar@/^2pc/[rr]^-{sp_{\tau_{s}}}_?
\ar@/_6pc/[dd]_-{sp_{\tau_{v_{{\rm{Higgs}},B}}}}^-?
&  
H^*(M{\rm{Hodge}}^{ss}_{ C_{s}})   
\ar[l]_-{\rho_{0_{s}}}
\ar[r]^-{\rho_{1_{s}}}
& 
H^*(M{\rm de \, Rham}^{ss}_{ C_{s}}) 
\ar@/^6pc/[dd]^-{sp_{\tau_{v_{{\rm de \, Rham},B}}}}_-?
\\
H^*(M{\rm{Higgs}}^{ss}_{ C_B})
\ar[d]^-{r_{\overline{\eta}}}  
\ar[u]_-{r_{{s}}}
&  
 H^*(M{\rm{Hodge}}^{ss}_{ C_B})    
 \ar[l]_-{\rho_{0_B}}
 \ar[r]^-{\rho_{1_B}} 
 \ar[d]^-{r_{\overline{\eta}}} 
 \ar[u]_-{r_{s}}
& 
H^*(M{\rm de \, Rham}^{ss}_{ C_B})
\ar[d]^-{r_{\overline{\eta}}}  
\ar[u]_-{r_{{s}}}
\\
H^*(M{\rm{Higgs}}^{ss}_{ C_{\overline{\eta}}})   
\ar@/_2pc/[rr]_-{sp_{\tau_{\overline{\eta}}}}^-?
&
H^*(M{\rm{Hodge}}^{ss}_{ C_{\overline{\eta}}})  
\ar[l]_-{\rho_{0_{\overline{\eta}}}}
\ar[r]^-{\rho_{1_{\overline{\eta}}}}
& 
H^*(M{\rm de \, Rham}^{ss}_{ C_{\overline{\eta}}}) 
,
}
\end{equation}
where we have omitted indicating the possible specialization arrow  ${sp_v}_{{\rm{Hodge}},B}$ in the central column for graphical reasons, and the specialization arrows
 are labeled by a ``?" because at this stage we do not know whether they are defined.
 
 \cite[Prop. 3.3. (ii)]{decataldo-zhang-nahpostive} proves that the filtered specialization morphism
${sp_v}_{{\rm{Higgs}},B}$ exists and is an isomorphism in the case without poles under suitable coprimality conditions.
The same principle of proof applies here. \cite{decataldo-zhang-nahpostive} does not address the similar question
in the de Rham case, nor in the Hodge case.   

Theorem \ref{thm:spv} puts a remedy to these omissions.
We show that  the specialization morphisms of type $sp_{v_B}$ (\ref{eq:3vb}) exist and are isomorphisms
and that, moreover, they are compatible with the specialization morphisms
of type $sp_{\tau{s}}$ and  $sp_{\tau_{\overline{\eta}}}$.
For technical reasons, if $B$ is of mixed characteristic $(0,p>0),$ then we need to assume that $p>n.$
In the case of equal positive characteristic $p,$ we prove, without the need to assume $p>n,$ that  this system of specialization morphisms
of type $sp_{\tau_{s, \overline{\eta}}}$ and $sp_{{v}_B}$ are also  compatible with the
perverse filtrations.

\begin{thm}\label{thm:spv}
Let $(B,s,\overline{\eta})$ be a strictly Henselian DVR. Assume that the rank $n$ and degree $d$ are coprime, and the fibers of the divisor $D$ are nonempty.
If $B$ has mixed characteristic $(0,p>0),$ then, in addition, we assume that  $p>n.$

The specialization morphisms $sp_{v_{{\rm{Hodge}},B}}, sp_{v_{{\rm{Higgs}},B}}, sp_{v_{{\rm de \, Rham},B}}$
are defined, are isomorphisms, and they are compatible with the specialization morphisms
$sp_{\tau_{s}}$ and $sp_{\tau_{\overline{\eta}}}$
of Theorem \ref{thm:spk},
 i.e. we have the natural commutative diagram of restrictions/pull-backs and specializations,
 all of which are isomorphisms of cohomology rings: ($sp_{v_{{\rm{Hodge}},B}}$ is omitted for graphical reasons)
 \begin{equation}\label{iso55}
\xymatrix{
H^*(M{\rm{Higgs}}^{ss}_{ C_{s}}) 
\ar@/^2pc/[rr]^-{sp_{\tau_{s}}}_-{\simeq}
\ar@/_6pc/[dd]_-{sp_{\tau_{v_{{\rm{Higgs}},B}}}}^-{\simeq}
&  
H^*(M{\rm{Hodge}}^{ss}_{ C_{s}})   
\ar[l]_-{\rho_{0_{s}}}^-{\simeq} 
\ar[r]_-{\simeq}^-{\rho_{1_{s}}}
& 
H^*(M{\rm de \, Rham}^{ss}_{ C_{s}}) 
\ar@/^6pc/[dd]^-{sp_{\tau_{v_{{\rm de \, Rham},B}}}}_-{\simeq}  
\\
H^*(M{\rm{Higgs}}^{ss}_{ C_B) } 
\ar[d]^-{r_{\overline{\eta}}}_-{\simeq}  
\ar[u]^-{\simeq}_-{r_{{s}}}
&  
 H^*(M{\rm{Hodge}}^{ss}_{ C_B})    
 \ar[l]_-{\rho_{0_B}}^-{\simeq}
 \ar[r]_-{\simeq}^-{\rho_{1_B}} 
 \ar[d]^-{r_{\overline{\eta}}}_-{\simeq}  
 \ar[u]^-{\simeq}_-{r_{s}}
& 
H^*(M{\rm de \, Rham}^{ss}_{ C_B})
\ar[d]^-{r_{\overline{\eta}}}_-{\simeq}  
\ar[u]^-{\simeq}_-{r_{{s}}}
\\
H^*(M{\rm{Higgs}}^{ss}_{ C_{\overline{\eta}}})   
\ar@/_2pc/[rr]_-{sp_{\tau_{\overline{\eta}}}}^-{\simeq}
&
H^*(M{\rm{Hodge}}^{ss}_{ C_{\overline{\eta}}})  
\ar[l]_-{\rho_{0_{\overline{\eta}}}}^-{\simeq}  
\ar[r]_-{\simeq}^-{\rho_{1_{\overline{\eta}}}}
& 
H^*(M{\rm de \, Rham}^{ss}_{ C_{\overline{\eta}}}) 
;
}
\end{equation}
In  particular, we have the identity:
\[
 sp_{\tau_{\overline{\eta}}} \circ sp_{v_{{\rm{Higgs}}, B}}
 =    sp_{v_{{\rm de \, Rham}, B}} \circ sp_{\tau_{s}}.
\]

The vertical morphisms on the left-hand-side  Higgs column  in  (\ref{iso55}) are filtered isomorphisms for the respective perverse filtrations as in \S\ref{subsec:specmor}.

If $B$ has equal characteristic $p>0,$ then all the morphisms in (\ref{iso55}) are filtered isomorphisms
for the respective perverse filtrations  as in \S\ref{subsec:specmor}.
\end{thm}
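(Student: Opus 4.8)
The plan is to build the cube (\ref{iso55}) out of two genuinely new inputs and then to harvest everything else formally. The first input is that each of the structural families $v_{{\rm{Higgs}},B},v_{{\rm de \, Rham},B},v_{{\rm{Hodge}},B}$ admits a compactification, proper over $B$, with the good properties of \cite{decataldo-cambridge} as generalised in \cite{decataldo-zhang-completion} and recalled in \S\ref{subs:1100} (the Hodge compactification being the one constructed in this paper). The second input is the smoothness and fibrewise geometric integrality of those families over $B$ furnished by Theorem \ref{thm:smooth}. Together these show, as in \cite[\S5]{decataldo-zhang-completion}, that for $?\in\{{\rm{Higgs}},{\rm de \, Rham},{\rm{Hodge}}\}$ the complex $Rv_{?,B,*}\overline{\mathbb{Q}}_\ell$ on $B$ has locally constant cohomology sheaves whose formation commutes with base change; since $B$ is a strictly Henselian trait this immediately gives that the vertical pull-backs $r_s$ and $r_{\overline{\eta}}$ in (\ref{iso55}) are isomorphisms, and that the base-change morphism $bc^{i^*v_*}$ of \cite[Defn. 3.1.3]{decataldo-cambridge} is an isomorphism, so that each $sp_{v_{?,B}}$ is defined and equals $r_{\overline{\eta}}\circ r_s^{-1}$, hence is an isomorphism. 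The horizontal restrictions $\rho_{0_s},\rho_{1_s},\rho_{0_{\overline{\eta}}},\rho_{1_{\overline{\eta}}}$ and the specialization morphisms $sp_{\tau_s},sp_{\tau_{\overline{\eta}}}$ are isomorphisms by Theorem \ref{thm:spk} applied over the separably closed fields $s$ and $\overline{\eta}$; the remaining horizontal restrictions $\rho_{0_B}$ and $\rho_{1_B}$ are then isomorphisms by a two-out-of-three argument in the commuting squares of (\ref{iso55}). All arrows in (\ref{iso55}) are thus isomorphisms.

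It remains to see that the cube (\ref{iso55}) commutes and to deduce the displayed identity. Commutativity is formal: the pull-backs $r_\bullet$ and $\rho_\bullet$ are induced by morphisms of moduli spaces compatible with the relevant base-change squares over $B$ and over $\mathbb{A}^1_B$, and the specialization morphisms are built from nearby cycles via those same squares and the chosen compactifications, so every face commutes by naturality of nearby cycles and base change. In particular the outer rectangle of the cube commutes; using the equalities $sp_{\tau_s}=\rho_{1_s}\circ\rho_{0_s}^{-1}$ and $sp_{\tau_{\overline{\eta}}}=\rho_{1_{\overline{\eta}}}\circ\rho_{0_{\overline{\eta}}}^{-1}$ from Theorem \ref{thm:spk}, together with the compatibility of $sp_{v_{{\rm{Higgs}},B}}$ and $sp_{v_{{\rm de \, Rham},B}}$ with the $\rho$'s, one reads off the identity $sp_{\tau_{\overline{\eta}}}\circ sp_{v_{{\rm{Higgs}},B}}=sp_{v_{{\rm de \, Rham},B}}\circ sp_{\tau_s}$.

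For the filtered statements I would work with the proper Hitchin-type morphisms. The Hitchin morphism $h_{{\rm{Higgs}},B}$ is proper (\cite[Thm. 5.9]{langer-moduli-lie-algebroids}) with target the vector bundle $A(C_B)$ over $B$ in every characteristic, so the perverse Leray filtrations on the Higgs cohomology are available over $s$, $\overline{\eta}$ and $B$, and Theorem \ref{thm:smooth} makes the family of perverse complexes $Rh_{{\rm{Higgs}},B,*}\overline{\mathbb{Q}}_\ell$ specialise along the trait without jumping, in the sense of the rectified perverse $t$-structure of \cite[\S5]{decataldo-zhang-completion}; this yields that $r_s$ and $r_{\overline{\eta}}$ on the Higgs column are filtered isomorphisms, and then, via \cite[Defn. 3.3.3]{decataldo-cambridge}, that $sp_{v_{{\rm{Higgs}},B}}$ is a filtered isomorphism, the filtered statements over $s$ and $\overline{\eta}$ themselves being the second half of Theorem \ref{thm:spk}. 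When $B$ has equal characteristic $p>0$ the Hodge--Hitchin morphism $h_{{\rm{Hodge}},B}$ and the de Rham--Hitchin morphism $h_{{\rm de \, Rham},B}$ are likewise proper over $A(C_B^{(B)})\times_B\mathbb{A}^1_B$ and $A(C_B^{(B)})$, so the same argument promotes every arrow of (\ref{iso55}) to a filtered isomorphism. In mixed characteristic this last step is unavailable because there is no $p$-curvature over the residue-characteristic-zero locus of $B$, which is exactly why the filtered conclusion is asserted only for the Higgs column.

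I expect the main obstacle to be precisely the definedness of $sp_{v_{{\rm{Hodge}},B}}$ and $sp_{v_{{\rm de \, Rham},B}}$ in the mixed characteristic case: there the non-properness of $v_{{\rm{Hodge}},B}$ and $v_{{\rm de \, Rham},B}$ cannot be resolved through a proper Hitchin-type morphism over $B$, so one must genuinely use the good compactifications, which is why a Hodge compactification is constructed in this paper and why one imposes $p>n$ --- this keeps the relevant moduli spaces and their compactifications flat and well-behaved over $B$ in the sense required by \cite{langer-moduli-lie-algebroids} and \cite{decataldo-zhang-completion}. A secondary, more technical point is checking that the local-constancy and good-compactification statements of \cite{decataldo-cambridge} and \cite{decataldo-zhang-completion}, originally over a field or over $\mathbb{C}$, really do go through over a base that is only a discrete valuation ring; this is expected to be routine but must be carried out with care, since base change along the non-smooth closed immersion $i_s\colon s\hookrightarrow B$ is involved throughout.
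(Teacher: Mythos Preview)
Your outline is correct and matches the paper's proof: Theorem \ref{thm:spk} for the top and bottom rows, the compactifications of \S\ref{subs:1100} together with the smoothness Theorem \ref{thm:smooth} for the three columns, and a two-out-of-three argument for $\rho_{0_B},\rho_{1_B}$. The filtered refinement is also handled as you say, via the Hitchin factorizations (\ref{cohi}), (\ref{code}), (\ref{cotau}), the last two existing only in equal positive characteristic.

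Two points where your sketch diverges from the paper's actual mechanics. First, you phrase the column argument as ``$Rv_{?,B,*}\overline{\mathbb{Q}}_\ell$ has locally constant cohomology sheaves,'' but the paper does not establish this directly. Instead it proves (Lemma \ref{lm:phib}) that the vanishing-cycle complexes $\phi\overline{\mathbb{Q}}_\ell$, $\phi a_!a^!\overline{\mathbb{Q}}_\ell$, $\phi b_*b^*\overline{\mathbb{Q}}_\ell$ vanish on the compactification $\widetilde{\overline{M{\rm{Hodge}}}}$, by first checking it on $M^*$ (smoothness plus relative SNC boundary, via \cite[XIII, Lm.~2.1.11]{SGA7-2}) and then descending through the $\mathbb{G}_m$-quotient $\pi=q\circ p$ by a chain of identities exploiting that $p$ is finite and $q$ is smooth. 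This vanishing is the precise input that feeds into \cite[Prop.~3.4.2.(A)]{decataldo-cambridge}; your appeal to \cite[\S5]{decataldo-zhang-completion} points in the right direction but skips the verification. Second, your diagnosis of the mixed-characteristic obstacle is slightly off: the unfiltered specialization for de Rham and Hodge does \emph{not} rely on a Hitchin-type morphism at all---the compactifications $\overline{M{\rm de \, Rham}}$ and $\widetilde{\overline{M{\rm{Hodge}}}}$ are proper over $B$ in every characteristic, and Lemma \ref{lm:phib} applies uniformly. The Hitchin-type morphisms enter only for the filtered statements, which is why those are restricted to the Higgs column (always) or to equal characteristic $p>0$ (for the rest).
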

\end{subsection}
\end{section}

\begin{section}{Deformation theory of $t$-connections} \label{sect:defth}
In this section, we construct an obstruction module and an
obstruction class to lifting $t$-connections for a square-zero thickening. We  also prove some compatibilities of the obstruction module with base-change.
These results are then used in Section \ref{subs:pfsmooth} to prove the smoothness assertion in Theorem \ref{thm:smooth}.

\begin{subsection}{Cech cohomology and base-change} \label{subs: cech}
Let $A$ be a Noetherian ring and let $Spec(A) \to \mathbb{A}^1_B$ be a morphism. 
If $M$ is an $A$-module, with associated 
$\mathcal{O}_{Spec(A)}$-module $\mathcal{M},$ and $\mathcal E$ is an $\mathcal{O}_{C_A}$-module, then
we denote $\mathcal{E} \otimes_{{\mathcal{O}_{C_A}}}
\pi_A^* \mathcal{M}$ simply by $\mathcal{E} \otimes_A M.$

Let $\mathcal{E}$ 
be a coherent $\mathcal{O}_{C_{A}}$-module. 
Let $\mathcal{U} = (U_i)_{i=1}^m$ be a finite affine open cover of the curve $C_A$. We have the corresponding \v{C}ech complex
 $(\widecheck{C}^{\bullet}(\mathcal{U},\mathcal{E}), \delta)$, whose definition we briefly recall next.
We set $U_{i_0, i_2, \ldots, i_l} := U_{i_0} \cap U_{i_1} \cap \ldots \cap U_{i_l}$, and  $\mathcal{E}_{i_0, i_1, \ldots, i_l} := \mathcal{E}(U_{i_0, i_1, \ldots, i_l})$. The (alternating) \v{C}ech complex has $l^{th}$-term given by
$\widecheck{C}^{l}(\mathcal{U},\mathcal{E})
 : = \prod_{(i_0< i_1< \ldots< i_l)} \mathcal{E}_{i_0, i_1, \ldots, i_l}$. The differential $\delta^l$ is given by: 
\[ (\delta^l (c) )_{i_0< \ldots <i_{l+1}} := \sum_{j=0}^{l+1} (-1)^{j+1} c_{i_0< i_1< \ldots, i_{j-1}< \widehat{i_{j}}< i_{j+1}< \ldots < i_{l+1}}.
\]

The following facts are standard, except possibly for part (d), where the morphism is ${\mathcal O}_A$-linear, but not $\mathcal{O}_{C_{A}}$-linear.
\begin{lemma} \label{lemma: base change cech cohomology lemma}
With notation as above, the following hold:
\begin{enumerate}[(a)]
    \item The $l^{th}$ cohomology group $\widecheck{H}^{l} (\mathcal{U}, \mathcal{E})$ of the \v{C}ech complex  computes the sheaf cohomology $H^l(\mathcal{E})=H^l(C_A,\mathcal{E})$. The \v{C}ech cohomology groups
    $\widecheck{H}^{l} (\mathcal{U}, \mathcal{E})$
    vanish for $l \geq 2$.
    \item Suppose that $\mathcal{E}$ is $A$-flat. Then, for any $A$-module $M$, the natural morphism:
    \[
    \xymatrix{
    H^1(\mathcal{E}) \otimes_{A} M \cong 
    \widecheck{H}^{l} (\mathcal{U}, \mathcal{E})
    \otimes_{A} M \ar[r] 
    &
    \widecheck{H}^{1} (\mathcal{U}, \mathcal{E}
    \otimes_{A} M) \cong H^1(\mathcal{E} \otimes_{A} M)
    }
    \]
    is an isomorphism,

    \item Suppose that $\mathcal{E}$ is $A$-flat. Let $S$ be an $A$-algebra, inducing a morphism $\sigma: Spec(S) \to Spec(A)$. 
    Then the natural morphism $H^1(\mathcal{E}) \otimes_{A} S \to H^1(\sigma_C^*(\mathcal{E}))$ is an isomorphism.
    \item With notation as in part (c), assume that $\mathcal{G}$ is another $A$-flat coherent sheaf 
    on $C_{A}$ equipped with a $A$-linear morphism of abelian sheaves $\varphi: \mathcal{E} \to \mathcal{G}$. Then, the  following 
    induced diagram is commutative:
    	\begin{figure}[H]
\centering
\begin{tikzcd}
  H^1(\mathcal{E})\otimes_{A} S \; \; \; \; \ar[r,"H^1(\varphi)\otimes_{A} S"] \ar[d, symbol = \xrightarrow{\sim}] & \; \; \; \;  H^1(\mathcal{G})\otimes_{A} S \ar[d, symbol = \xrightarrow{\sim}] \\  H^1((\sigma_C)^* \mathcal{E}) \; \; \ar[r, "H^1(\varphi \otimes_{A} S)"] & \; \; H^1((\sigma_C)^* \mathcal{G}).
\end{tikzcd}
\end{figure}
 
\end{enumerate}
\end{lemma}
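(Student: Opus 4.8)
The statement is a package of standard \v{C}ech-cohomology facts together with one less-standard functoriality assertion (part (d)); I will treat them in the order (a), (b), (c), (d), since each relies on the previous ones. For part (a), the plan is to recall that for a coherent sheaf on a Noetherian scheme the \v{C}ech complex with respect to an affine open cover computes sheaf cohomology when the cover (together with all finite intersections) is acyclic for the sheaf; since $C_A$ is separated over the affine $\Spec(A)$, all finite intersections $U_{i_0,\dots,i_l}$ are affine, hence acyclic for the coherent sheaf $\mathcal{E}$, so Leray/\v{C}ech-to-derived-functor gives $\widecheck{H}^l(\mathcal{U},\mathcal{E})\cong H^l(C_A,\mathcal{E})$. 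The vanishing for $l\ge 2$ follows because $C_A\to\Spec(A)$ has relative dimension $1$ and the fibers are affine over the affine base in a suitable sense — more precisely, one reduces to the vanishing of $H^l$ for $l\ge 2$ of a coherent sheaf on a scheme that is covered by an affine cover whose nerve has dimension controlled by the relative dimension; concretely, since $C_A$ is a curve over $\Spec(A)$ one can always choose an affine cover by two opens (complement of a relative ample divisor and an affine neighborhood of it), making $\widecheck{C}^l=0$ for $l\ge 2$ on the nose. (If one does not insist on a two-element cover, one invokes that cohomological dimension of $C_A$ for coherent sheaves is $\le 1$.)

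For part (b), I would use that when $\mathcal{E}$ is $A$-flat, each term $\widecheck{C}^l(\mathcal{U},\mathcal{E})=\prod_{i_0<\dots<i_l}\mathcal{E}(U_{i_0,\dots,i_l})$ is a flat $A$-module (a product of flat modules over a Noetherian ring, with finitely many factors, is flat), so the complex $\widecheck{C}^\bullet(\mathcal{U},\mathcal{E})$ is a bounded complex of flat $A$-modules. Tensoring a bounded-above complex of flat modules with $M$ commutes with taking cohomology up to the usual universal-coefficients spectral sequence; but here the complex is concentrated in degrees $0,1$ and its top cohomology $H^1$ is the cokernel of $\delta^0$, so $\widecheck{H}^1(\mathcal{U},\mathcal{E})\otimes_A M \to \widecheck{H}^1(\mathcal{U},\mathcal{E}\otimes_A M)$ is an isomorphism simply because $-\otimes_A M$ is right exact and $\widecheck{C}^2=0$ (or $\widecheck{H}^2=0$), identifying both sides with $\operatorname{coker}(\delta^0\otimes M)$. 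The identification $\widecheck{C}^\bullet(\mathcal{U},\mathcal{E})\otimes_A M\cong\widecheck{C}^\bullet(\mathcal{U},\mathcal{E}\otimes_A M)$ is the statement that restriction to an affine open commutes with $-\otimes_A M$ on the (flat, coherent) sheaf $\mathcal{E}$, which is immediate from $\mathcal{E}\otimes_A M=\mathcal{E}\otimes_{\mathcal{O}_{C_A}}\pi_A^*\widetilde M$ and affine-locality.

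Part (c) is the special case $M=S$ of part (b) combined with flat base change: $\widecheck{H}^1(\mathcal{U},\mathcal{E})\otimes_A S\cong\widecheck{H}^1(\mathcal{U}_S,(\sigma_C)^*\mathcal{E})$ where $\mathcal{U}_S$ is the pulled-back cover of $C_S$, and the latter computes $H^1((\sigma_C)^*\mathcal{E})$ by part (a) applied over $S$; one checks that $(\sigma_C)^*\mathcal{E}$ is again $S$-flat and coherent so part (a) applies. The only point to verify is that the natural base-change map on \v{C}ech complexes $\widecheck{C}^\bullet(\mathcal{U},\mathcal{E})\otimes_A S\to\widecheck{C}^\bullet(\mathcal{U}_S,(\sigma_C)^*\mathcal{E})$ is an isomorphism, which is again affine-locality of pullback. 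Finally, for part (d) — the one I expect to be the main obstacle — the subtlety flagged in the statement is that $\varphi\colon\mathcal{E}\to\mathcal{G}$ is only $\mathcal{O}_A$-linear (i.e. $\pi_A^{-1}\mathcal{O}_{\Spec A}$-linear), not $\mathcal{O}_{C_A}$-linear, so it does not \emph{a priori} interact with the $\pi_A^*$-pullback construction used to form $-\otimes_A M$ and $(\sigma_C)^*$. The plan is to observe that $\varphi$ nonetheless restricts to each affine open $U_{i_0,\dots,i_l}$ (being a map of sheaves of abelian groups, or of $\mathcal{O}_A$-modules) and induces $\varphi\colon\widecheck{C}^\bullet(\mathcal{U},\mathcal{E})\to\widecheck{C}^\bullet(\mathcal{U},\mathcal{G})$, a map of complexes of $A$-modules, hence $H^1(\varphi)$ on cohomology; the vertical isomorphisms of part (c) are induced by the base-change maps of \v{C}ech complexes, which are $\mathcal{O}_A$-linear (indeed the construction $\mathcal{E}\mapsto\mathcal{E}\otimes_A S$ at the level of sections over an affine open is just $\otimes_A S$ of $A$-modules). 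The commutativity of the square then reduces to the commutativity, for each affine open $U$ in the cover of $C_S$, of
\[
\xymatrix{
\mathcal{E}(U_A)\otimes_A S \ar[r]^-{\varphi\otimes_A S} \ar[d] & \mathcal{G}(U_A)\otimes_A S \ar[d] \\
((\sigma_C)^*\mathcal{E})(U) \ar[r]^-{\varphi\otimes_A S} & ((\sigma_C)^*\mathcal{G})(U),
}
\]
which holds because the vertical maps are the canonical $A$-linear comparison maps and $\varphi$ is by hypothesis compatible with the $A$-module structure — i.e. $\varphi(e)\otimes s$ and the image of $\varphi(e)\otimes s$ agree by naturality of $-\otimes_A S$ applied to the $A$-linear map $\varphi$ on sections. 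The one thing that genuinely uses the structure rather than formal nonsense is that the base-change isomorphisms in (c) are built from \v{C}ech sections, so the square commutes termwise in the \v{C}ech complex and hence on $H^1$; assembling these termwise commutativities into the displayed diagram finishes the proof.
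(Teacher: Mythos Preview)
Your proposal is correct and follows essentially the same approach as the paper. The only notable differences are stylistic: for the vanishing in (a) you invoke cohomological dimension (or a two-element cover), whereas the paper cites the theorem on formal functions; and for (b) you gesture at the universal-coefficients spectral sequence while the paper makes this explicit by passing to the truncation $\tau_{\leq 1}(\widecheck{C}^\bullet)$ and observing that $H^1$ is then a cokernel, which commutes with $-\otimes_A M$. Your remark that ``$\widecheck{C}^2=0$ (or $\widecheck{H}^2=0$)'' slightly conflates the two-element-cover case with the general case---in the latter $H^1$ is not literally $\operatorname{coker}(\delta^0)$---but your preceding universal-coefficients comment covers this, and the paper's truncation argument is exactly the clean way to make it precise. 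Parts (c) and (d) match the paper's reasoning.
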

\begin{proof}
\quad
\begin{enumerate}[(a)]
    \item 
    The higher cohomology of the coherent sheaf $\mathcal{E}$ over each of the affine subsets $U_{i_0, \ldots, i_l}$ vanishes. 
    Therefore, the \v{C}ech to sheaf cohomology spectral sequence
    (cf. 
    \cite[\href{https://stacks.math.columbia.edu/tag/03OU}{Tag 03O}]{stacks-project})
    for $\mathcal{E}$ is $E_2$-degenerate, thus yielding a canonical identification 
    $\widecheck{H}^{l} (\mathcal{U},\mathcal{E}) \cong H^l(\mathcal{E})$. 
    The vanishing of $\widecheck{H}^{l} (\mathcal{U},\mathcal{E}) \cong H^l(\mathcal{E})$ for $l \geq 2$ follows 
    from the theorem on formal functions, and the fact that the fibers of the morphism $C_{A} \to A$ 
    have dimension $1$ (cf. \cite[\href{https://stacks.math.columbia.edu/tag/02V7}{Tag 02V7}]{stacks-project}).
    
    \item By virtue of the $A$-flatness of $\mathcal{E}$, it follows that all 
    of the terms of the \v{C}ech complex $\widecheck{C}^{\bullet}(\mathcal{U},\mathcal{E})$ are flat $A$-modules.
    (cf. \cite[\href{https://stacks.math.columbia.edu/tag/01U4}{Tag 01U4}]{stacks-project}).
    Consider the truncation $\tau_{\leq 1}(
    \widecheck{C}^{\bullet}(\mathcal{U},\mathcal{E})),
    $ given by the two-term complex:
    \[ \tau_{\leq 1}(\widecheck{C}^{\bullet}(\mathcal{U},\mathcal{E}))) 
    = \left[ \widecheck{C}^{0}(\mathcal{U},\mathcal{E})) \xrightarrow{\delta^0} \text{ker}(\delta^1) \right]. \]
    This truncation is quasi-isomorphic to $\widecheck{C}^{\bullet}(\mathcal{U},\mathcal{E}))$, 
    via the given inclusion, because the cohomology in degree $\geq 2$ vanishes by (a). Moreover, 
    since all the terms $\widecheck{C}^{l}(\mathcal{U},\mathcal{E}))$ are flat $A$-modules, 
    it follows that for any $A$-module $M$ we have 
    $\tau_{\leq 1}(\widecheck{C}^{\bullet}(\mathcal{U},\mathcal{E}) \otimes_{A} M) 
    \cong \tau_{\leq1}(\widecheck{C}^{\bullet}(\mathcal{U},\mathcal{E})) \otimes_{A} M$. 
    Note that the \v{C}ech complex of $\mathcal{E} \otimes_{A} M$ coincides with $\widecheck{C}^{\bullet}(\mathcal{U},\mathcal{E}) \otimes_{A} M$. 
    By using this fact, we see that  $\tau_{\leq 1}(\widecheck{C}^{l} 
    (\mathcal{U}, \mathcal{E}\otimes_{A} M)) \cong \tau_{\leq 1}(\widecheck{C}^{\bullet}(\mathcal{U},\mathcal{E})) \otimes_{A} M$. 
    Therefore, it follows that
    \begin{align*}
        \widecheck{H}^1(\mathcal{E}\otimes_{A}M) 
        & \cong H^1(\tau_{\leq 1}(\widecheck{C}^{\bullet}(\mathcal{U},\mathcal{E} \otimes_{A} M)) )\\
        & \cong 
        H^1 \tau_{\leq 1}(\widecheck{C}^{\bullet}(\mathcal{U},\mathcal{E}) \otimes_{A} M  )) \\
     & = \text{coker}\left[ 
     \widecheck{C}^{0}(\mathcal{U},\mathcal{E})\otimes_{A} M \xrightarrow{\delta^0 \otimes_{A} 
     {\text id}_M} \text{ker}(\delta^1) \otimes_{A} M \right].
    \end{align*} 
    On the other hand, since, irrespective of flatness,  the operation $(-)\otimes_{A}M$
    commute with taking cokernels, we have that:
    \begin{align*}
        \text{coker}\left[ 
         \widecheck{C}^{0}(\mathcal{U},\mathcal{E})
        \otimes_{A} M \xrightarrow{\delta^0 \otimes_{A} \text{Id}_M} \text{ker}(\delta^1) \otimes_{A}M \right] & \cong \text{coker}\left[ 
        \widecheck{C}^{0}(\mathcal{U},\mathcal{E})
        \xrightarrow{\delta^0} \text{ker}(\delta^1) \right] \otimes_{A} M \\& \cong H^1(\tau_{\leq 1}(
        \widecheck{C}^{\bullet}(\mathcal{U},\mathcal{E})
        )) 
        \otimes_{A} M \\ &\cong \widecheck{H}^1(\mathcal{E}) \otimes_{A} M,
    \end{align*} 
    as predicated.
    \item This follows immediately from part (b) by setting $M = S$.
    \item By restricting $\varphi: \mathcal{E} \to \mathcal{G}$ to each $U_{i_0, \ldots, i_l}$, we see that $\varphi$ induces 
    an $A$-linear morphism of \v{C}ech complexes $\widecheck{\varphi}: 
    \widecheck{C}^{\bullet}(\mathcal{U},\mathcal{E})
    \to \widecheck{C}^{\bullet}(\mathcal{U},\mathcal{
    G})$. It is readily seen that the morphism $\widecheck{\varphi}\otimes_{A} \text{Id}_S$ is the morphism 
    corresponding to $\varphi \otimes_{A} \text{Id}_S$ for the corresponding \v{C}ech complexes of $\sigma_C^*\mathcal{E}$ 
    and $\sigma_C^*(\mathcal{G})$ (notice that here we are using that $\varphi$ is $A$-linear to form the tensor product!). 
    The commutativity of the diagram follows, by using the identifications provided by the previous part (c).
\end{enumerate}
\end{proof}

Fix a morphism $x_A: Spec(A) \to \mathcal{M}{\rm{Hodge}}_{C_B}^{ss}$ over $\mathbb{A}^1_B$. 
This amounts to a pair $(\mathcal{F}, \nabla)$ consisting of a vector bundle $\mathcal{F}$ on $C_{A}$ and a logarithmic $t_{A}$-connection $\nabla$. 

\begin{notn}
We
denote by:
\begin{equation}\label{fgt}
\xymatrix{
\varphi_{x_A}: \mathcal{E}nd(\mathcal{F})
\ar[r]
&
\mathcal{E}nd(\mathcal{F}) \otimes_{\mathcal{O}_{C_A}} \omega_{C_A/A}(D_A)
}
\end{equation}
the $\mathcal{O}_A$-linear morphism that sends a local section 
$\theta$ in $\mathcal{E}nd(\mathcal{F})$ to the commutator $\nabla \circ \theta - \theta \circ \nabla$.
\end{notn}

\begin{defn} \label{defn: obstruction module}
The module of obstructions $\mathcal{Q}_{x_A}$ is defined to be the $A$-module  cokernel of the following $A$-linear morphism in sheaf cohomology: 
\[
\mathcal{Q}_{x_A} : = \text{coker}\left[ H^1(\mathcal{E}nd(\mathcal{F}) \xrightarrow{H^1(\varphi_{x_A})} H^1( \mathcal{E}nd(\mathcal{F}) \otimes_{\mathcal{O}_{C_A}} \omega_{C_A/A}(D_A)) \right].
\]
Note that since $C_{A} \to A$ is proper, $\mathcal{Q}_{x_A}$ is a finitely generated $A$-module.
\end{defn}

Let $S$ be a Noetherian  ring  with the structure of an $A$-algebra, thus inducing a morphism  $\sigma: Spec(S) \to Spec(A)$. We shall denote by $x_S = (\sigma_{C}^*(\mathcal{F}), \sigma_C^*(\nabla))$ the $t_{S}$-connection on $C_{S}$ obtained by pulling back $x_A$ via $\sigma$ (using $A$-linearity as in \cite[Lemma 2.7]{Simpson-repnI}). More concretely, $\sigma_C^*(\nabla)$ is the unique connection that satisfies $\sigma_{C}^*(\nabla)(\sigma_C^*(s)) = \sigma_C^*(\nabla(s))$ for every local pullback section $\sigma_S^*(s)$ of the sheaf $\sigma_C^*(\mathcal{F})$ (this uniquely determines the morphism on all sections by the $t_{S}$-Leibniz rule).
\begin{coroll} \label{coroll: base change obstruction module}
There is a natural isomorphism
of $S$-modules $\mathcal{Q}_{x_S} \cong \mathcal{Q}_{x_A} \otimes_{A} S$ of $S$-modules, i.e. the formation of the obstruction module commutes with base change.
\end{coroll}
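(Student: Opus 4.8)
The plan is to deduce this from parts (c) and (d) of Lemma \ref{lemma: base change cech cohomology lemma}, applied to the $\mathcal{O}_A$-linear morphism $\varphi_{x_A}$ of \eqref{fgt}, together with the right exactness of $-\otimes_A S$. The first thing to record is that the hypotheses of Lemma \ref{lemma: base change cech cohomology lemma}(c)--(d) are met: since $\mathcal{F}$ is a vector bundle on $C_A$ and $\pi_A : C_A \to \Spec(A)$ is smooth (in particular flat), both $\mathcal{E}nd(\mathcal{F})$ and $\mathcal{E}nd(\mathcal{F}) \otimes_{\mathcal{O}_{C_A}} \omega_{C_A/A}(D_A)$ are coherent and $A$-flat, and $\varphi_{x_A}$ is a morphism of abelian sheaves which is $\mathcal{O}_A$-linear but, because of the $t_A\, df$-term forced by the $t_A$-Leibniz rule, \emph{not} $\mathcal{O}_{C_A}$-linear --- which is exactly why one needs part (d) rather than plain flat base change of an $\mathcal{O}_{C_A}$-linear map.

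Next I would record the base-change compatibilities of the source and target sheaves of $\varphi_{x_A}$. Since $\mathcal{F}$ is locally free, there is a canonical identification $\sigma_C^*\mathcal{E}nd(\mathcal{F}) \cong \mathcal{E}nd(\sigma_C^*\mathcal{F}) = \mathcal{E}nd(\mathcal{F}_S)$; and since the relative dualizing sheaf of the smooth curve $C_A/A$ and the Cartier divisor $D_A$ both commute with base change, $\sigma_C^*\big(\omega_{C_A/A}(D_A)\big) \cong \omega_{C_S/S}(D_S)$. Thus $\sigma_C^*$ of the target of $\varphi_{x_A}$ is canonically the target $\mathcal{E}nd(\mathcal{F}_S)\otimes_{\mathcal{O}_{C_S}}\omega_{C_S/S}(D_S)$ of $\varphi_{x_S}$.

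The crux is then to check that, under these identifications, the morphism $\varphi_{x_A}\otimes_A \mathrm{Id}_S$ appearing in Lemma \ref{lemma: base change cech cohomology lemma}(d) coincides with $\varphi_{x_S}$. This is a local assertion, and it suffices to test it on local pullback sections: a local section of $\mathcal{E}nd(\mathcal{F}_S)$ may be written in terms of pullback sections of $\mathcal{F}_S$, on which $\sigma_C^*\nabla$ is characterized, as recalled right before the statement, by $\sigma_C^*(\nabla)(\sigma_C^* s) = \sigma_C^*(\nabla(s))$; feeding this into the commutator $\nabla\circ\theta-\theta\circ\nabla$ and using the $t_S$-Leibniz rule shows that $\varphi_{x_A}\otimes_A\mathrm{Id}_S$ and $\varphi_{x_S}$ agree on pullback sections, hence everywhere. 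I expect this compatibility of the commutator construction with base change --- together with keeping careful track that $\varphi_{x_A}$ is only $\mathcal{O}_A$-linear, so that the subtle part (d) of the Lemma is what is being invoked --- to be the only real point of the argument; the rest is formal.

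Granting this, Lemma \ref{lemma: base change cech cohomology lemma}(c)--(d) produce the commutative square
\[
\xymatrix{
H^1(\mathcal{E}nd(\mathcal{F})) \otimes_A S \ar[r]^-{H^1(\varphi_{x_A}) \otimes_A S} \ar[d]_-{\simeq} & H^1(\mathcal{E}nd(\mathcal{F}) \otimes_{\mathcal{O}_{C_A}} \omega_{C_A/A}(D_A)) \otimes_A S \ar[d]^-{\simeq} \\
H^1(\mathcal{E}nd(\mathcal{F}_S)) \ar[r]^-{H^1(\varphi_{x_S})} & H^1(\mathcal{E}nd(\mathcal{F}_S) \otimes_{\mathcal{O}_{C_S}} \omega_{C_S/S}(D_S)),
}
\]
with vertical isomorphisms natural in $S$. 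Taking cokernels of the two horizontal arrows and using that $-\otimes_A S$ is right exact, hence commutes with cokernels, the top cokernel is $\mathcal{Q}_{x_A}\otimes_A S$ while the bottom cokernel is $\mathcal{Q}_{x_S}$ by Definition \ref{defn: obstruction module}; the induced map is the desired natural isomorphism $\mathcal{Q}_{x_A}\otimes_A S \cong \mathcal{Q}_{x_S}$ of $S$-modules.
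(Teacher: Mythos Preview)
Your proof is correct and follows essentially the same approach as the paper: invoke Lemma \ref{lemma: base change cech cohomology lemma}(d) for the $A$-linear (but not $\mathcal{O}_{C_A}$-linear) morphism $\varphi_{x_A}$, then use that cokernels commute with tensor products. The paper's proof is a one-line reference to these ingredients, whereas you have carefully spelled out the verification that $\varphi_{x_A}\otimes_A \mathrm{Id}_S$ coincides with $\varphi_{x_S}$ under the base-change identifications---a point the paper leaves implicit.
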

\begin{proof}
This follows from Lemma \ref{lemma: base change cech cohomology lemma}.(d), the definition of $\mathcal{Q}_{x_A}$, and the fact that the formation of cokernels commutes with tensor products.
\end{proof}
\end{subsection}

\begin{subsection}{Obstruction classes}\label{sec:obs-cl}
\begin{context}
Let $A$ be a local Artin algebra. Fix a morphism $Spec(A) \to \mathbb{A}^1_B$ and let $t_A \in A$
be the image of $t.$
For ease of notation, we employ simplified notation such as $\omega_{C_A/A}$
and $D_A.$
Choose a pair $x_A = (\mathcal{F}, \nabla),$
given by a $t_A$-connection on $C_A$ and 
corresponding to a morphism $x_A: Spec(A) \to \mathcal{M}{\rm{Hodge}}_{C_B}^{ss}$ over $\mathbb{A}^1_B.$ 
\end{context}

Let $\widetilde{A}$ be another local Artin algebra, equipped with a surjective local homomorphism $\widetilde{A} \twoheadrightarrow A$. Denote by $\iota: Spec(A) \to Spec(\widetilde{A})$ the resulting closed embedding.
We assume that the kernel $\widetilde{I}$ of this surjection is a square-zero ideal in $\widetilde{A}.$ 
Then $\widetilde{I}$ carries
a canonical structure of an $A$-module, denoted by $I.$

\begin{remark}\label{reminderI}
Let $\widetilde{M}$ be an $\widetilde{A}$-module.
We have the $A$-module $M:= \widetilde{M}/\widetilde{I} \widetilde{M}.$ Since $\widetilde{I}^2=0,$ there is no conflict 
with the notation we have chosen for $I.$ We have that $I=\widetilde{I} \otimes_{\widetilde{A}} A.$
Irrespective of $\widetilde{I}$ squaring to zero, we have that:
$M = \widetilde{M} \otimes_{\widetilde{A}} A;$
if $\widetilde{M}$ is $\widetilde{A}$-flat, then the natural
surjective $\widetilde{A}$-morphism $\widetilde{I} \otimes_{\widetilde{A}} \widetilde{M}
\twoheadrightarrow \widetilde{I}\widetilde{M}$ is an isomorphism.
We have  a canonical isomorphism of $A$-modules:
$\widetilde{I} \otimes_{\widetilde{A}} \widetilde{M}= I \otimes_A M;$
if in addition $\widetilde{M}$ is $\widetilde{A}$-flat, then
these two $A$-modules are also $A$-isomorphic to $\widetilde{I} \widetilde{M}.$ 
We also have the analogous relations for $\mathcal{O}_A$ and $\mathcal{O}_{\widetilde{A}}$-modules
respectively on $C_A$ and $C_{\widetilde{A}},$ respectively.
For example, we have that if $\widetilde{\mathcal{E}}$ is a locally
free $\mathcal{O}_{C_{\widetilde{A}}}$-module on 
$C_{\widetilde{A}},$
with restriction $\mathcal{E}$ to $C_A$, then we have a canonical isomorphism of $\mathcal{O}_{C_{A}}$-modules:
\begin{equation}\label{kloi}
\widetilde{\mathcal{E}} \otimes_{\mathcal{O}_{C_{\widetilde{A}}}}
\widetilde{I} = 
\mathcal{E} \otimes_{\mathcal{O}_{C_A}}
I.
\end{equation}
\end{remark}

 Choose a compatible morphism $Spec(\widetilde{A}) \to \mathbb{A}^1_{B}$, so that $\iota: Spec(A) \hookrightarrow Spec(\widetilde{A})$ is a morphism over $\mathbb{A}^1_B$. This gives a well-defined  lift $t_{\widetilde{A}}$
of $t_A$. We thus have a commutative diagram of solid arrows:
	\begin{figure}[H]\label{figh}
\centering
\begin{tikzcd}
  Spec(A) \ar[r,"x_A"] \ar[d, symbol = \hookrightarrow, "\iota", labels= left] &  \mathcal{M}{\rm{Hodge}}_{C_B}^{ss} \ar[d] \\ Spec(\widetilde{A}) \ar[r] \ar[ur, dashrightarrow,"y_{\widetilde{A}}"] & \mathbb{A}^1_B.
\end{tikzcd}
\end{figure}
We are interested in finding lifts as in the dotted arrow. This amounts to finding a $t_{\widetilde{A}}$-connection $y_{\widetilde{A}} = (\widetilde{\mathcal{F}}, \widetilde{\nabla})$ over $C_{\widetilde{A}}$ such that the pullback $\iota^*(y_{\widetilde{A}})$ is isomorphic to $x_A$.

The following proposition is key to the proof of the smoothness Theorem \ref{thm:smooth}. While it is probably standard, we could not locate a reference in the literature.
\begin{prop} \label{prop: obstruction class}
With notation as above, there exists a well-defined element $\text{ob}_{x_A} \in \mathcal{Q}_{x_A}\otimes_{A}I$ such that $\text{ob}_{x_A} = 0$ if and only if a lift $y_{\widetilde{A}}$ 
of $x_A$  exists. In particular such a lift $y_{\widetilde{A}}$ always exists if $\mathcal{Q}_{x_A} = 0$.
\end{prop}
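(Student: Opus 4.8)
The plan is to reduce the lifting problem to a \v{C}ech $1$-cocycle computation and read off the obstruction class from there. First I would choose a finite affine open cover $\mathcal{U} = (U_i)$ of $C_{\widetilde A}$ trivializing $\widetilde{\mathcal F}$, where $\widetilde{\mathcal F}$ is \emph{any} lift of $\mathcal F$ to a locally free sheaf on $C_{\widetilde A}$ (such a lift always exists and is unique up to isomorphism, since the obstruction to lifting a vector bundle across a square-zero thickening lives in $H^2$ of a coherent sheaf on the curve, which vanishes by Lemma \ref{lemma: base change cech cohomology lemma}.(a); and the choices of lift and of gluing isomorphisms form a torsor under $H^1(\mathcal{E}nd(\mathcal F)\otimes_A I)$). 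So the only genuine obstruction is to lifting the connection $\nabla$, with $\widetilde{\mathcal F}$ fixed. Over each $U_i$, since $\widetilde{\mathcal F}|_{U_i}$ is free, I would choose a local $t_{\widetilde A}$-connection $\widetilde\nabla_i$ on $\widetilde{\mathcal F}|_{U_i}$ lifting $\nabla|_{U_i}$ (this is possible locally: pick a basis, define the connection by the $t_{\widetilde A}$-Leibniz rule using any lift of the connection matrix).

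Next I would measure the failure of the $\widetilde\nabla_i$ to glue. On an overlap $U_{ij}$, the difference $\widetilde\nabla_i - \widetilde\nabla_j$ is $\mathcal{O}_{C_{\widetilde A}}$-linear (the $t_{\widetilde A}$-Leibniz terms cancel) and reduces to $0$ on $C_A$; hence by (\ref{kloi}) it defines a section $c_{ij} \in (\mathcal{E}nd(\widetilde{\mathcal F}) \otimes_{\mathcal{O}_{C_{\widetilde A}}} \omega_{C_{\widetilde A}/\widetilde A}(D_{\widetilde A}) \otimes \widetilde I)(U_{ij}) = (\mathcal{E}nd(\mathcal F) \otimes_{\mathcal{O}_{C_A}} \omega_{C_A/A}(D_A) \otimes_A I)(U_{ij})$. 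A direct check shows $(c_{ij})$ is a \v{C}ech $1$-cocycle for the cover $\mathcal{U}$, so it defines a class $[c] \in H^1(\mathcal{E}nd(\mathcal F)\otimes_{\mathcal{O}_{C_A}}\omega_{C_A/A}(D_A)) \otimes_A I$, using Lemma \ref{lemma: base change cech cohomology lemma}.(b) to pull the $\otimes_A I$ outside. Then I would define $\mathrm{ob}_{x_A} \in \mathcal{Q}_{x_A}\otimes_A I$ to be the image of $[c]$ under the quotient map $H^1(\mathcal{E}nd(\mathcal F)\otimes\omega(D)) \twoheadrightarrow \mathcal{Q}_{x_A}$, tensored with $I$.

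Then I must verify well-definedness and the ``if and only if''. Changing the local lifts $\widetilde\nabla_i \rightsquigarrow \widetilde\nabla_i + \theta_i$ with $\theta_i \in (\mathcal{E}nd(\mathcal F)\otimes\omega(D)\otimes_A I)(U_i)$ changes $c_{ij}$ by the coboundary $\delta(\theta)_{ij} = \theta_i - \theta_j$, so $[c]$ itself is already independent of that choice; changing the lift $\widetilde{\mathcal F}$ or its gluing data changes $\widetilde\nabla_i - \widetilde\nabla_j$ by an exact term of the form $\varphi_{x_A}(g_{ij})$ coming from the \v{C}ech cocycle $g \in \widecheck C^1(\mathcal U, \mathcal{E}nd(\mathcal F)\otimes_A I)$ that records the change of trivialization — this is exactly why one passes to the cokernel $\mathcal{Q}_{x_A}$ by $H^1(\varphi_{x_A})$, and so the image in $\mathcal{Q}_{x_A}\otimes_A I$ is independent of all choices. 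For the equivalence: a global lift $\widetilde\nabla$ exists (with $\widetilde{\mathcal F}$ the chosen lift) iff one can correct the $\widetilde\nabla_i$ by sections $\theta_i$ so that they agree on overlaps, i.e. iff $[c] = 0$ in $H^1$; allowing $\widetilde{\mathcal F}$ and its gluing to vary as well corresponds precisely to killing the image of $H^1(\varphi_{x_A})$, i.e. to the vanishing of $\mathrm{ob}_{x_A}$ in $\mathcal{Q}_{x_A}\otimes_A I$. The last sentence of the proposition is then immediate. I expect the main technical obstacle to be bookkeeping the interaction between the freedom in choosing $\widetilde{\mathcal F}$ (and its gluing isomorphisms) and the freedom in choosing the local connections $\widetilde\nabla_i$ — i.e. checking carefully that the combined ambiguity is exactly the image of $H^1(\varphi_{x_A}) \otimes_A I$, and not something larger or smaller — together with making the identifications of (\ref{kloi})-type canonical enough that the cocycle condition and the coboundary computations are literally valid on the nose rather than just up to unspecified isomorphism.
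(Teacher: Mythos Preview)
Your proposal is correct and follows essentially the same \v{C}ech-theoretic route as the paper: lift the bundle (using $H^2=0$), lift the connection locally, form the difference cocycle $c_{ij}=\widetilde\nabla_i-\widetilde\nabla_j$, and pass to the cokernel of $H^1(\varphi_{x_A})$ to absorb the ambiguity in the choice of bundle lift. One small slip: the lift $\widetilde{\mathcal F}$ is \emph{not} unique up to isomorphism (you yourself note a line later that the lifts form an $H^1(\mathcal{E}nd(\mathcal F)\otimes_A I)$-torsor); this non-uniqueness is precisely why the obstruction must be read in $\mathcal{Q}_{x_A}\otimes_A I$ rather than in $H^1(\mathcal{E}nd(\mathcal F)\otimes\omega(D))\otimes_A I$, and the paper spends most of its effort making explicit exactly the computation you flag as the main technical obstacle.
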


In order to prove the proposition, we will make use of the following consequence of the nilpotent version of Nakayama's lemma.
\begin{lemma} \label{lemma: nilpotent nakayama}
Let $Spec(R)$ be an affine scheme, and let $J$ be a nilpotent ideal in $R$. Let $M$ be a locally-free $R$-module. Then, any trivialization of the vector bundle $M/JM$ on $Spec(R/J)$ lifts to a trivialization of $M$.
\end{lemma}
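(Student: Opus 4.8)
\textbf{Plan for the proof of Lemma \ref{lemma: nilpotent nakayama}.}
The plan is to reduce the statement to the classical nilpotent Nakayama lemma by choosing a lift of the given trivializing frame and checking it is itself a frame. Concretely, suppose $M$ is free of rank $m$ (the general locally-free case follows by working on a suitable affine open cover, or one may simply assume $M$ is free since the statement is about trivializations). A trivialization of $M/JM$ amounts to a basis $\overline{e}_1,\dots,\overline{e}_m$ of $M/JM$ as an $R/J$-module. First I would lift each $\overline{e}_i$ arbitrarily to an element $e_i \in M$; this is possible since $M \twoheadrightarrow M/JM$ is surjective. It then suffices to prove that $e_1,\dots,e_m$ form an $R$-basis of $M$, i.e. that the $R$-linear map $\phi: R^{\oplus m} \to M$ sending the standard basis to $(e_i)$ is an isomorphism.

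The key step is to apply nilpotent Nakayama to the cokernel and the kernel of $\phi$. By construction $\phi$ reduces modulo $J$ to the isomorphism $(R/J)^{\oplus m} \xrightarrow{\sim} M/JM$, so $\operatorname{coker}(\phi)/J\operatorname{coker}(\phi) = \operatorname{coker}(\phi \bmod J) = 0$; since $J$ is nilpotent and $\operatorname{coker}(\phi)$ is a finitely generated $R$-module, nilpotent Nakayama gives $\operatorname{coker}(\phi) = 0$, so $\phi$ is surjective. Now consider the short exact sequence $0 \to K \to R^{\oplus m} \xrightarrow{\phi} M \to 0$ with $K = \ker(\phi)$. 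Since $M$ is free (hence flat), tensoring with $R/J$ keeps it exact: $0 \to K/JK \to (R/J)^{\oplus m} \to M/JM \to 0$. But the second map here is the isomorphism $\phi \bmod J$, forcing $K/JK = 0$, and then nilpotent Nakayama (note $K$ is finitely generated as a submodule of $R^{\oplus m}$ over the Noetherian ring $R$, or because it is a direct summand once we know more — but f.g. is immediate in the Noetherian setting assumed throughout the paper) gives $K = 0$. Hence $\phi$ is an isomorphism, and $(e_i)$ is the desired trivialization of $M$ lifting the given one.

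I do not expect any serious obstacle: the only point requiring a little care is the flatness input used to keep the sequence $0 \to K \to R^{\oplus m} \to M \to 0$ exact after $\otimes_R R/J$, which is exactly where the hypothesis that $M$ is locally free (equivalently here, flat) is used — without it $\operatorname{Tor}_1^R(M, R/J)$ could obstruct the conclusion $K/JK = 0$. The finite generation of $\operatorname{coker}(\phi)$ and of $K$, needed to invoke nilpotent Nakayama, is automatic since $R$ is Noetherian and all modules in sight are finitely generated. (If one prefers to avoid even the Noetherian hypothesis, $\operatorname{coker}(\phi)$ is f.g. outright as a quotient of $M$, and once $\phi$ is surjective $K$ is a direct summand of $R^{\oplus m}$ because $M$ is projective, hence f.g.) The argument is local on $\operatorname{Spec}(R)$ in the locally-free-but-not-free case, but since the data of a trivialization already trivializes $M$ Zariski-locally in a compatible way, no gluing subtlety arises.
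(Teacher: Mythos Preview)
Your proposal is correct and follows essentially the same approach as the paper: lift a basis modulo $J$, use nilpotent Nakayama for surjectivity, and then kill the kernel. The only cosmetic difference is in the injectivity step: the paper uses projectivity of $M$ to split $R^{\oplus m} \cong M \oplus \ker(\phi)$ and then applies Nakayama to the summand, whereas you use flatness of $M$ to keep the short exact sequence exact after $\otimes_R R/J$; you in fact mention the splitting argument yourself as an alternative, so the two write-ups are interchangeable.
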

\begin{proof}
A trivialization $\overline{\psi}: (R/J)^{\oplus n} \xrightarrow{\sim} M/JM$ amounts to a choice of $n$ independent elements $\overline{m_1}, \overline{m_2}, \ldots \overline{m}_n \in M/JM$. For each $i$, fix the choice of a lift $m_i \in M$ mapping to $\overline{m}_i$ under the surjection $M \twoheadrightarrow M/JM$. We claim that the corresponding morphism $\psi: R^{\oplus n} \xrightarrow{\oplus_i m_i} M$ is an isomorphism, thus concluding the proof of the lemma.

Surjectivity follows by the nilpotent version of Nakayama's lemma  \cite[\href{https://stacks.math.columbia.edu/tag/00DV}{Tag 00DV (11)}]{stacks-project}.
Next, we prove injectivity.
Since $M$ is a projective $R$-module, the surjective morphism $R^{\oplus n} \twoheadrightarrow M$ splits as $R^{\oplus n} \cong M \oplus \text{ker}(\psi)$. Since the reduction
modulo $J$ $\overline{\psi}$  of $\psi$ is injective, we must then have that $\text{ker}(\psi)/J\text{ker}(\psi) = 0$. It follows by  \cite[\href{https://stacks.math.columbia.edu/tag/00DV}{Tag 00DV (9)}]{stacks-project} that $\text{ker}(\psi) =0$, as desired. 
\end{proof}

\begin{proof}[Proof of Proposition \ref{prop: obstruction class}]
 The closed embedding $\iota: C_A \to C_{\widetilde{A}}$ induces a homeomorphism on the underlying topological spaces. In particular, an open cover $\mathcal{U}$ of $C_A$ induces an evident open cover $\mathcal{U}_{\widetilde{A}}$ of $C_{\widetilde{A}}$,
 compatibly with restrictions, i.e. 
$\mathcal{U}_{\widetilde{A}}$ restricts to 
$\mathcal{U}_A.$ 

Fix a finite affine open cover  $\mathcal{U}= (U_{A;i})_{i=1}^m$  of $C_A$
on which the restriction of  $\mathcal F$
is trivializable. We employ the notation $U_{A;i_0, \ldots, i_l} = 
U_{A;i_0} \cap \ldots \cap  U_{A;i_l},$ and similarly
for 
$U_{\widetilde{A};i_0, \ldots, i_l}.$
By Lemma \ref{lemma: base change cech cohomology lemma}, the \v{C}ech cohomology with respect to these covers computes the corresponding sheaf cohomology groups. We do not make a distinction between \v{C}ech and sheaf cohomology, and we use the results in Lemma \ref{lemma: base change cech cohomology lemma} freely without further mention.

By standard deformation theory for vector bundles \cite[Thm. 7.1]{hartshorne-deformation}, the obstruction to lifting the vector bundle $\mathcal{F}$
 from $C_A$ to a vector bundle $\widetilde{\mathcal{F}}$ on  $C_{\widetilde{A}}$ lives in the second cohomology group $H^2(C_A,\mathcal{E}nd(\mathcal{F})).$ This group is $\{0\}$ because $C_A$ is a curve  over the affine
 $Spec(A)$ (cf. Lemma \ref{lemma: base change cech cohomology lemma}(a)). 
 
Choose a  locally free lift $\widetilde{\mathcal{F}}$ of $\mathcal{F}$ to $C_{\widetilde{A}}$. We use the notation $\mathcal{F}_{i} := \mathcal{F}|_{U_{A;i}}$ and $\nabla_i := \nabla|_{U_{A;i}},$ and analogously for restrictions to multiple intersection.
 Similarly, we set $\widetilde{\mathcal{F}}_i : = \widetilde{\mathcal{F}}|_{U_{\widetilde{A};i}}$.  
 
Choose trivializations of the $\mathcal{F}_i$ on the $U_{A;i}.$
 By virtue of Lemma \ref{lemma: nilpotent nakayama}, 
 we can and do choose trivializations of the  $\widetilde{\mathcal{F}}_i$ 
 that restrict to the chosen trivializations of
  the $\mathcal{F}_i.$ 
  
Since the open sets of the covers are affine, we can and do choose  lifts
of the $t_{A}$-connections $\nabla_i$ on the ${\mathcal F}_i$ to  $t_{\widetilde{A}}$-connections $\widetilde{\nabla}_i$ on the $\widetilde{\mathcal{F}}_i.$
 Indeed, under the trivialization of $\mathcal{F}_i$, the $t_{A}$-connection $\nabla_i$ can be written as $t_{A}d + M_i$ for some matrix  $M_i$ with entries sections
 of $\omega_{U_{A;i}}(D_A).$ Here, recalling that  
 $D_A:=D_B|_A$ is the pull-back of $D_B$ to $C_A,$ we have abbreviated $D_A|_{U_{A;i}}$ to $D_A,$
 and we have abbreviated
 $\omega_{C_A/A}|_{U_{A;i}}$ to
 $\omega_{U_{A;i}}.$ Note that,
 by the invariance properties of K\"ahler differentials, we have that
 $\omega_{U_{\widetilde{A};i}}|_{U_{A;i}}= \omega_{U_{A;i}}.$

 Since $U_i$ is affine and $\widetilde{A} \twoheadrightarrow A$ is surjective, we can lift $M_i$ to a matrix $\widetilde{M}_i$ 
 with entries  sections
 of $\omega_{U_{\widetilde{A};i}}(D_{\widetilde{A}}).$
This matrix $\widetilde{M}_i$ can be used to define the connection $\widetilde{\nabla}_i:= t_{\widetilde{A}}d + \widetilde{M}_i,$
a lift of $\nabla_i$ to the trivialized vector bundle $\widetilde{\mathcal{F}}_i$.
We define:
\begin{equation}\label{cocycle c}
c_{i,j} := \widetilde{\nabla}_i|_{U_{A;i,j}} - \widetilde{\nabla}_j|_{U_{A;i,j}}.
\end{equation}
The differences $c_{i,j}$ of these connections are sections  inside  $\Gamma (\mathcal{E}nd(\widetilde{\mathcal{F}}_{i,j}) \otimes \omega_{U_{\widetilde{A};i,j}}(D_{\widetilde{A}}))$. Since $\widetilde{\nabla}_i|_{U_{A;i,j}} = \widetilde{\nabla}_j|_{U_{A;i,j}}  = \nabla|_{U_{A;i,j}}$, the elements $c_{i,j}$ actually land
in the submodule 
$\widetilde{I}\cdot \Gamma (\mathcal{E}nd(\widetilde{\mathcal{F}}_{i,j}) \otimes \omega_{U_{\widetilde{A};i,j}}(D_{\widetilde{A}}))$. We will use the series of identifications
\begin{align*}
    \widetilde{I}\cdot \Gamma (\mathcal{E}nd(\widetilde{\mathcal{F}}_{i,j}) \otimes \omega_{U_{\widetilde{A};i,j}}(D_{\widetilde{A}})) & =   \Gamma (\widetilde{I}\cdot [\mathcal{E}nd(\widetilde{\mathcal{F}}_{i,j}) \otimes \omega_{U_{\widetilde{A};i,j}}(D_{\widetilde{A}})]) \\
& =\Gamma ([\mathcal{E}nd(\widetilde{\mathcal{F}}_{i,j} ) \otimes \omega_{U_{\widetilde{A};i,j}}(D_{\widetilde{A}})]
\otimes_{\widetilde{A}} \widetilde{I}) \\ & = \Gamma(\mathcal{E}nd(\mathcal{F}_{i,j}) \otimes
\omega_{U_{A;i,j}}(D_A) \otimes_{A}I).
\end{align*}
Here the second identification follows from the flatness of the argument in square parentheses and the last identification follows from \ref{kloi}). 

We can thus view  $c :=(c_{i,j})$ as a cochain in $\widecheck{C}^1(\mathcal{U}, \mathcal{E}nd(\mathcal{F}) \otimes \omega_{C_A/A}(D_A) \otimes_{A} I)$.  By its very definition,
this cochain is a cocyle, i.e. $\delta^1(c)_{i,j,k} =0.$
We denote by $[c]$ the corresponding cohomology class in $\widecheck{H}^1 (\mathcal{U},\mathcal{E}nd(\mathcal{F}) \otimes \omega_{C_A/A}(D_A) \otimes_{A} I) \cong$ (cf.
Lemma \ref{lemma: base change cech cohomology lemma}(b))
$\widecheck{H}^1 (\mathcal{U},\mathcal{E}nd(\mathcal{F}) \otimes \omega_{C_A/A}(D_A)) \otimes_A I.$

 Let $\text{ob}_{x_A}$ to be the image $\overline{[c]}$ of $[c]$ in
 $\mathcal{Q}_{x_A} \otimes_{A} I$, the obstruction module
 twisted by $I$ (cf. (\ref{defn: obstruction module}).
 In order to prove the proposition, 
 we need to check that: i) $\text{ob}_{x_A}$ does not depend on the choices involved in its construction, namely
 the lift $\widetilde{\mathcal{F}}$ of $\mathcal{F},$ the
 trivializing cover $\mathcal{U}$  and  the lifts $\widetilde{\nabla}_i$ of the $\nabla_i$; ii) $\text{ob}_{x_A}$ vanishes if and only if there exists a lift $(\widetilde{\mathcal{F}}, \widetilde{\nabla})$ to $C_{\widetilde{A}}$ of $(\mathcal{F}, \nabla)$ on $C_A.$

Fix a lift $\widetilde{\mathcal{F}}$ and a trivializing cover $\mathcal{U}.$ Choose two different sets of lifts $\widetilde{\nabla}_i$ and $\widetilde{\nabla}'_i.$

We have the cocycle $c$ and the cocycle $c'=(c_{i,j}^1):= \widetilde{\nabla}'_i|_{U_{A;i,j}}- \widetilde{\nabla}'_j|_{U_{A;i,j}}.$ Since $\widetilde{\nabla}'_i \equiv \widetilde{\nabla}_i\,  (mod \, \widetilde{I})$, their common restriction to $U_{A;i}$ being $\nabla_i,$ the difference $h_i : = \widetilde{\nabla}'_i - \widetilde{\nabla}_i$ is an element of $\Gamma(\mathcal{E}nd(\mathcal{F}_i) \otimes \omega_{U_{A;i}} (D_A)) \otimes_{A} I$ by the same reasoning as above. We can view $h=(h_i)$ as a cochain in $\widecheck{C}^0(\mathcal{U},\mathcal{E}nd(\mathcal{F}) \otimes \omega_{C_A/A}(D_A)) \otimes_{A} I$. 
By construction, we have:
\[ 
c' = c + \delta^0(h),
\]
so that $[c'] = [c] \in  \widecheck{H}^1 (\mathcal{U},
\mathcal{E}nd(\mathcal{F}) \otimes \omega_{C_A/A}(D_A)) \otimes_{A} I.$

\textbf{N.B. I:}
Conversely, for any $0$-chain $h=(h_i)$ we can define $\widetilde{\nabla}'_i := \widetilde{\nabla}_i +h_i$, and then we end up with cohomologous cycles $c'= c + \delta^0(h)$.

Let $\mathcal{V}$ be a finite refinement of the  given trivializing cover $\mathcal{U}.$ Let $\tau, \tau': I_{\mathcal{V}} \to I_{\mathcal{U}}$
be any two refinement maps on the indexing sets of the covers, so that
$V_x \subseteq U_{\tau (x)} \cap U_{\tau' (x)}.$
We denote by the same symbol the induced chain homotopic morphisms of cochain complexes
$\tau, \tau': \widecheck{C} (\mathcal{U},-) \to \widecheck{C} (\mathcal{V},-)$
(cf. \cite[\href{https://stacks.math.columbia.edu/tag/09UY}{Tag 09UY}]{stacks-project}). A choice of lifts $\widetilde{\nabla}_i$ gives rise to the cocycle $c$ for $\mathcal{U}$ as above. The  two choices
of lifts
$\widetilde{\nabla}_{\tau (x)}| V_x$  and $\widetilde{\nabla}_{\tau' (x)}| V_x,$  give rise to
 corresponding cocycles for $\mathcal{V}$ denoted $\gamma$ and $\gamma'.$ By construction, we see that $\tau (c) = \gamma$ and $\tau'(c)=\gamma',$ so that the formation of the cohomology class $[c]$,
for a given lift $\widetilde{\mathcal{F}},$ is compatible with finite refinements of covers. The usual argument involving common  refinements tells us that the formation of the class $[c]$
depends only on the choice of lift $\widetilde{\mathcal{F}}.$
We are thus  left with showing that
given a trivializing cover, the obstruction class is independent of the choice
of lift $\widetilde{\mathcal{F}}.$

We choose a second locally free lift $\widetilde{\mathcal{F}}^1$ to $C_{\widetilde{A}}$ of the vector bundle $\mathcal{F}$ on $C_{\widetilde{A}}$. Since the restrictions $\mathcal{F}_i, \widetilde{\mathcal{F}}^1_i, \widetilde{\mathcal{F}}_i$ are trivializable on their corresponding affine schemes, we can choose
isomorphisms $\psi_i: \widetilde{\mathcal{F}}_i \xrightarrow{\sim} \widetilde{\mathcal{F}}^1_i$ that restrict to the identity on $\mathcal{F}_i$. For each pair of indexes $i,j$, the isomorphism ${\psi_j}^{-1}|U_{\widetilde{A};i,j} \circ \psi_i|U_{\widetilde{A};i,j}: \widetilde{\mathcal{F}}_{i,j} \to \widetilde{\mathcal{F}}_{i,j},$ is a lift of the identity,
so that it is of the form  $\text{Id} + B_{i,j}$ for a unique element $B_{i,j}\in \text{End}(\widetilde{\mathcal{F}}_{i,j}) \otimes_{\widetilde{A}} \widetilde{I} \cong \text{End}(\mathcal{F}_{i,j}) \otimes_{A} I$. We can view $B= (B_{i,j})$ as a cochain in $\widecheck{C}^1 (\mathcal{U}_A, (\mathcal{E}nd(\mathcal{F}) \otimes_A I)$. By direct computation, we see that $B$ is  a cocycle. We can use $\psi_i$ to define lifts $\widetilde{\nabla}^1_i = \psi_i \circ \widetilde{\nabla}_i \circ \psi^{-1}_i$ on $\widetilde{\mathcal{F}}^1_i$, so that they fit into the following commutative diagram:
\begin{figure}[H]
\centering
\begin{tikzcd}
  \widetilde{\mathcal{F}}_i \ar[r, "\widetilde{\nabla}_i"] \ar[d, "\psi_i", labels = left] &  \widetilde{\mathcal{F}}_i
  \otimes \omega_{C_{\widetilde{A}/\widetilde{A}}} (D_{\widetilde{A}})
  \ar[d, "\psi_i"] 
  \\ 
  \widetilde{\mathcal{F}}^1_i \ar[r, "\widetilde{\nabla}^1_i"] &  \widetilde{\mathcal{F}}^1_i
  \otimes \omega_{C_{\widetilde{A}/\widetilde{A}}} (D_{\widetilde{A}}).
\end{tikzcd}
\end{figure}
The new \v{C}ech cocycle $c^1= (c^1_{i,j})$ for this choice of $\widetilde{\mathcal{F}}^1$ and $\widetilde{\nabla}^1_i$ is given by:
\[ c^1_{i,j} =  \widetilde{\nabla}^1_i|_{U_{\widetilde{A};i,j}} - \widetilde{\nabla}^1_j|_{U_{\widetilde{A};i,j}} = (\psi_i \circ \widetilde{\nabla}_i \circ \psi^{-1}_i)|_{U_{\widetilde{A};i,j}} - (\psi_j \circ \widetilde{\nabla}_j \circ 
\psi^{-1}_j)|_{U_{\widetilde{A};i,j}}.
\]
We know that $c^1=(c_{i,j}^1)$ is actually a cocycle dwelling in the submodule 
$\Gamma( \mathcal{E}nd(\mathcal{F}_{i,j}) \otimes 
\omega_{C_A/A}(D_A)) \otimes_{A} I$. Since $\psi_j$ restricts to the identity on $\mathcal{F}_j$, we see that applying $\psi_j^{-1}|_{\widetilde{U}_{i,j}} \circ (-) \circ \psi_j|_{\widetilde{U}_{i,j}}$ does not affect the cocycle in $\Gamma( \mathcal{E}nd(\mathcal{F}_{i,j}) \otimes 
\omega_{C_A/A}(D_A)) \otimes_{A} I$. We can thus re-write:
\[ c_{i,j}^1 = \psi_j^{-1}|_{U_{\widetilde{A};i,j}} \circ c^1_{i,j} \circ \psi_j|_{U_{\widetilde{A};i,j}} = (\psi^{-1}_j \circ \psi_i \circ \widetilde{\nabla}_i \circ \psi^{-1}_i \circ \psi_j)|_{U_{\widetilde{A};i,j}} - \widetilde{\nabla}_{j}|_{U_{\widetilde{A};i,j}} \]
This can be re-written as:
\[ c_{i,j}^1 = (1+B_{i,j}) \circ \widetilde{\nabla}_i \circ (1-B_{i,j})|_{U_{\widetilde{A};i,j}} - \widetilde{\nabla}_{j}|_{U_{\widetilde{A};i,j}}. 
\]
By expanding, and using that $\text{End}(\mathcal{F}_{i,j}) \otimes_{A} I$
and $\widetilde{I}^2=0$, we get: (we omit denoting the restrictions
to ${U_{\widetilde{A};i,j}}$)
\[ c_{i,j}^1 = - \widetilde{\nabla}_i \circ B_{i,j}
+ B_{i,j} \circ \widetilde{\nabla}_i + \widetilde{\nabla}_{i}
- \widetilde{\nabla}_j  =  -
\widetilde{\nabla}_i \circ B_{i,j} + B_{i,j} \circ \widetilde{\nabla}_i + c_{i,j}.
\]
Now, since $B_{i,j}$ lies in the submodule $\text{End}(\mathcal{F}_{i,j}) \otimes_{A} I$ and $\widetilde{\nabla}_i$ is a lift of $\nabla_i$, the commutator can be rewritten as: (omitting restrictions again,
and recalling (\ref{fgt}))
\[
\widetilde{\nabla}_i \circ B_{i,j} - B_{i,j} \circ \widetilde{\nabla}_i = \nabla_i \circ B_{i,j}- B_{ij} \circ \nabla_i 
= (\widecheck{C}^1(\varphi_{x_A})(B))_{i,j}.
\]
 In conclusion, the new cocycle $c^1$ can be expressed as
 \begin{equation} \label{equation: cocycle change by lift of vector bundle}
     c^1= -\widecheck{C}^1(\varphi_{x_A})(B) + c.
 \end{equation}
Hence it differs from the cocycle $c$ by the image of a cocycle in $\widecheck{C}^1 (\mathcal{U}_A,\mathcal{E}nd(\mathcal{F})) \otimes_{A} I$, and so,
 given Definition \ref{defn: obstruction module} of the obstruction $A$-module $\mathcal{Q}_{x_A}$, it yields the same element $\text{ob}_{x_A}$ in $\mathcal{Q}_{x_A} \otimes_{A} I,$. We have established the sought-after indepedence on the locally free lift $\widetilde{\mathcal{F}}$
 of $\mathcal{F}$.

\textbf{N.B. II:}
In the last argument, the cocycle $B=(B_{i,j})$ depends on the choice of isomorphisms $\psi_i$ up to the coboundary of a $0$-chain. Indeed, we can always change the isomorphisms $\psi_i$ by precomposing by an automorphism of $\widetilde{\mathcal{F}}|_{i}$, which will be of the form $\text{Id} + M_i$ for some cochain $M=(M_i)$ in $\widecheck{C}^0(\mathcal{U}, \mathcal{E}nd(\mathcal{F}) \otimes_{A}I)$. It follows from the computations above that the new cocycle obtained by changing the $\psi_i$ in this way will be of the form $B + \delta^0(M)$. We conclude that the corresponding cohomology class $\overline{B}$ of $B$ in $\widecheck{H}^1( \mathcal{U}, \mathcal{E}nd(\mathcal{F})) \otimes_{A} I$ is well-defined. Conversely, by standard deformation theory of vector bundles \cite[Thm. 7.1]{hartshorne-deformation}, every such cohomology class $\overline{B}$ arises this way from a choice of a locally free lift
of $\mathcal{F}$ to $C_{\widetilde{A}}.$ This establishes a canonical bijection
between isomorphism classes of lifts 
$\widetilde{\mathcal{F}}$ of $\mathcal{F}$ and cohomology classes in $\widecheck{H}^1 (\mathcal{U}, \mathcal{E}nd(\mathcal{F})) \otimes_{A} I$. 

Hence, for any given cocycle $B$ in $\widecheck{C}^1(\mathcal{U}, \mathcal{E}nd(\mathcal{F}) \otimes_{A} I)$, we can find a given lift $\widetilde{\mathcal{F}}^1$ and isomorphisms $\psi_i$ such that the corresponding cocycle is cohomologous to $B$. By further changing the given $\psi_i$ by a $0$-cocycle $M_i$ as described above, we can moreover assume that the corresponding cocycle in $\widecheck{C}^1 (\mathcal{U}_A, (\mathcal{E}nd(\mathcal{F}) \otimes_A I)$ is $B$ on the nose. Hence, for any given cocycle $B$ we find lifts as described above so that the new obstruction cocycle is $c^1= -\widecheck{C}^1(\varphi_{x_A})(B) + c$, as in the computation above (Equation \ref{equation: cocycle change by lift of vector bundle}).

We conclude by showing that $\text{ob}_{x_A}=0$ if and only if there exists a lift of the $t_{A}$-connection to $C_{\widetilde{A}}$. 

First, suppose that $\text{ob}_{x_A} = 0$. Choose a suitably finite trivializing cover $\mathcal{U}$ and some lifts $\widetilde{\nabla}_i$ and $\widetilde{\mathcal{F}}$. The corresponding cocycle  $c=(c_{i,j})$ satisfies $[\overline{c}]=0$, and so it is cohomologous to an element in the image of $\widecheck{C}^1(\varphi_{x_A})$. We can thus
find $h =(h_i) \in \widecheck{C}^0 (\mathcal{U}, \mathcal{E}nd(\mathcal{F}) \otimes \omega_{C_A/A}(D_A) \otimes_{A} I)$ and $B= (B_{i,j}) \in \widecheck{Z}^1 (\mathcal{U}, \mathcal{E}nd(\mathcal{F}) \otimes_{A} I),$ such that $c = \delta^0(h) + \widecheck{C}^1(\varphi_{x_A})(B)$. 
Replacing the lifts with $\widetilde{\nabla}_i := \nabla_i - h_i$ instead, as in \textbf{N.B. I}, we can assume that the cocycle $c$ is of the form $c= \widecheck{C}^1(\varphi_{x_A})(B)$. 
Now, by \textbf{N.B.2}, we can choose another lift $\widetilde{\mathcal{F}}^1$ and isomorphisms $\psi_i$ that correspond to the cocycle $B$. As in \textbf{N.B. II}, this yields new choices of lifts such that the corresponding new cocycle $c^1 = c -\widecheck{C}^1(\varphi_{x_A})(B)=0$ vanishes. 
Hence, we can assume without loss of generality that $c$ is identically $0$. 
Since by definition we have $c_{i,j}= \widetilde{\nabla}^1_i|_{\widetilde{U}_{i,j}}- \widetilde{\nabla}^1|_j|_{\widetilde{U}_{i,j}} $, this means that the $\widetilde{\nabla}_i^1$ agree on the intersections $U_{\widetilde{A};i,j}$, and so they glue to give a $t_{\widetilde{A}}$-connection $(\widetilde{\mathcal{F}},\widetilde{\nabla})$ that lifts $(\mathcal{F},\nabla).$

Conversely, suppose that there exists a lift $(\widetilde{\mathcal{F}}, \widetilde{\nabla})$ of the $t_{A}$-connection to $C_{\widetilde{A}}$. Then can use $\widetilde{\mathcal{F}}$ as the lift of the vector bundle, choose any trivializing cover, and set 
 $\widetilde{\nabla}_i := \widetilde{\nabla}_{|\widetilde{U}_i}$ in the construction of a cocycle $c=(c_{i,j})$ representing $\text{ob}_{x_A}$. Since the $\widetilde{\nabla}_i$ agree on the intersections, we have $c_{i,j} = 0$, so that $\text{ob}_{x_A} = 0$.
\end{proof}
\end{subsection}

\begin{subsection}{Relative tangent space}
For any algebraic stack $\mathcal{M}$ and any geometric point $x: \text{Spec}(k) \to \mathcal{M}$, the tangent space $T_{\mathcal{M}, x}$ is defined to be the set of isomorphism classes of pairs $(y, \psi)$, where $y: Spec(k[\epsilon]/(\epsilon^2)) \to \mathcal{M}$ is a $k[\epsilon]/(e^2)$-point of $\mathcal{M}$ and $\psi$ is an isomorphism $y|_{\text{Spec}(k[\epsilon]/(\epsilon))} \xrightarrow{\sim} x$. The tangent space $T_{\mathcal{M}, x}$ acquires a canonical structure of a $k$-vector space.

We shall describe the tangent spaces of the fibers of $\mathcal{M}{\rm{Hodge}}^{ss}_{C_B} \to \mathbb{A}^1_B$. Fix a geometric point $a: \text{Spec}(k) \to \mathbb{A}^1_{B}$, and choose a geometric point $x: \text{Spec}(k) \to (\mathcal{M}{\rm{Hodge}}_{C_B}^{ss})_a$ of the fiber $(\mathcal{M}{\rm{Hodge}}_{C_B}^{ss})_a$. The point $x$ represents a pair $(\mathcal{F}, \nabla)$ of a vector bundle and a logarithmic $t_a$-connection. In Subsection \ref{subs: cech} we made use of the following complex of sheaves of $k$-vector spaces to define the obstruction module
\[ C^{\bullet}(x) := \left[ \mathcal{E}nd(\mathcal{F}) \xrightarrow{\varphi_x} \mathcal{E}nd(\mathcal{F}) \otimes_{\mathcal{O}_{C_A}} \omega_{C_A/A}(D_A)\right]\]
Here by convention we place the left term $\mathcal{E}nd(\mathcal{F})$ in cohomological degree $0$. We shall denote by $\mathbb{H}^i\left(C^{\bullet}(x)\right) := \mathbb{H}^i\left(C_a, C^{\bullet}(x)\right)$ denote the $i^{th}$ hypercohomology of the complex. By the hypercohomology spectral sequence, we have a natural identification $\mathbb{H}^2\left(C^{\bullet}(x)\right) \cong \mathcal{Q}_x$. The spectral sequence also identifies $\mathbb{H}^0\left(C^{\bullet}(x)\right)$ with the $k$-vector space $\text{End}(x)$ consisting of endomorphisms of the vector bundle $\mathcal{F}$ that commute with the logarithmic $t_a$-connection $\nabla$. The argument in \cite[Thm. 4.2]{nitsure-log-connections} generalizes without change to the setting of logarithmic $t_a$-connections to show that there is a natural identification of $k$-vector spaces $\mathbb{H}^1\left(C^{\bullet}(x)\right) \cong T_{(\mathcal{M}{\rm{Hodge}}_{C_B}^{ss})_a, x}$ (see also \cite[\S5]{hao-lambda-modules-dm-stacks} for a treatment in the generality of $\Lambda$-modules). Using these identifications, we give a dimension formula for the tangent space $T_{(\mathcal{M}{\rm{Hodge}}_{C_B}^{ss})_a, x}$.
\begin{coroll} \label{coroll: relative tangent space}
Fix $a \in \mathbb{A}^1_B$. For any geometric point $x \in (\mathcal{M}{\rm{Hodge}}_{C_B}^{ss})_a$ of the fiber, the dimension of the tangent space $T_{(\mathcal{M}{\rm{Hodge}}_{C_B}^{ss})_a, x}$ of $x$ in $(\mathcal{M}{\rm{Hodge}}_{C_B}^{ss})_a$ is given by
\[ \text{dim}\left(T_{(\mathcal{M}{\rm{Hodge}}_{C_B}^{ss})_a, x}\right) = n^2(2g-2 + {\rm{deg}}(D_a)) + \text{dim}\left({\rm{End}}(x)\right) + \text{dim}(\mathcal{Q}_{x})\]
In particular, if the rank $n$ and degree $d$ are coprime, then the dimension of the tangent space of the fiber is given by
\[ \text{dim}\left(T_{(\mathcal{M}{\rm{Hodge}}_{C_B}^{ss})_a, x}\right) = n^2(2g-2 + {\rm{deg}}(D_a)) + 1 + \text{dim}(\mathcal{Q}_{x})\]
\end{coroll}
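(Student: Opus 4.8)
The plan is to read the dimension of the tangent space off an Euler-characteristic computation for the two-term complex $C^{\bullet}(x)$ on the fiber curve $C_a$, combined with the identifications $\mathbb{H}^0(C^{\bullet}(x)) = {\rm End}(x)$, $\mathbb{H}^1(C^{\bullet}(x)) = T_{(\mathcal{M}{\rm{Hodge}}_{C_B}^{ss})_a, x}$ and $\mathbb{H}^2(C^{\bullet}(x)) = \mathcal{Q}_x$ recorded in the paragraph above. First I would observe that $C^{\bullet}(x)$ is concentrated in cohomological degrees $0$ and $1$ and that each of its two terms is a coherent sheaf on the curve $C_a$, so has sheaf cohomology only in degrees $0$ and $1$; hence $\mathbb{H}^i(C^{\bullet}(x))$ vanishes outside $i \in \{0,1,2\}$, and the hypercohomology spectral sequence gives the Euler-characteristic identity
\[
\dim \mathbb{H}^0(C^{\bullet}(x)) - \dim \mathbb{H}^1(C^{\bullet}(x)) + \dim \mathbb{H}^2(C^{\bullet}(x)) = \chi\big(\mathcal{E}nd(\mathcal{F})\big) - \chi\big(\mathcal{E}nd(\mathcal{F}) \otimes_{\mathcal{O}_{C_a}} \omega_{C_a/a}(D_a)\big).
\]

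Next I would compute the two Euler characteristics by Riemann--Roch on the smooth projective genus-$g$ curve $C_a$, via $\chi(E) = \deg(E) + {\rm rank}(E)(1-g)$. Since $\mathcal{E}nd(\mathcal{F}) \cong \mathcal{F}^{\vee} \otimes \mathcal{F}$ has rank $n^2$ and degree $0$, one gets $\chi(\mathcal{E}nd(\mathcal{F})) = n^2(1-g)$; twisting by the line bundle $\omega_{C_a/a}(D_a)$, of degree $2g-2+\deg(D_a)$, increases the degree by $n^2\big(2g-2+\deg(D_a)\big)$, so that $\chi\big(\mathcal{E}nd(\mathcal{F}) \otimes_{\mathcal{O}_{C_a}} \omega_{C_a/a}(D_a)\big) = n^2\big(g-1+\deg(D_a)\big)$. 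The difference of the two Euler characteristics is therefore $-n^2\big(2g-2+\deg(D_a)\big)$, and solving the displayed identity for the middle term yields
\[
\dim T_{(\mathcal{M}{\rm{Hodge}}_{C_B}^{ss})_a, x} = n^2\big(2g-2+\deg(D_a)\big) + \dim {\rm End}(x) + \dim \mathcal{Q}_x,
\]
which is the first asserted formula.

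For the second formula I would use that coprimality of $n$ and $d$ forces semistable $=$ stable (as noted just after Definition~\ref{def:Hodge}), so that the $t_a$-connection $(\mathcal{F},\nabla)$ is simple: any endomorphism of $\mathcal{F}$ commuting with $\nabla$ has kernel and image that are $\nabla$-invariant subsheaves of the same slope as $\mathcal{F}$, hence by stability it is a scalar. Thus ${\rm End}(x) = k$, $\dim {\rm End}(x) = 1$, and the second formula follows. I do not anticipate a real obstacle here; the only point meriting care is that $C^{\bullet}(x)$ --- with $\nabla$ entering through the commutator $\varphi_x$ and the right-hand term twisted only by $\omega_{C_a/a}(D_a)$ --- computes the tangent space of the \emph{fiber} over $a \in \mathbb{A}^1_B$, i.e. the first-order deformations of the pair $(\mathcal{F},\nabla)$ that keep the parameter $t_a$ fixed, which is exactly the generalization of \cite[Thm.~4.2]{nitsure-log-connections} invoked above; everything else is the bookkeeping just described.
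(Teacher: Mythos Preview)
Your proof is correct and follows essentially the same approach as the paper: compute the Euler characteristic of the two-term complex $C^{\bullet}(x)$ via Riemann--Roch, then use the identifications $\mathbb{H}^0\cong\mathrm{End}(x)$, $\mathbb{H}^1\cong T$, $\mathbb{H}^2\cong\mathcal{Q}_x$ to solve for the tangent-space dimension; for the coprime case you argue $\mathrm{End}(x)=k$ via stability, which is exactly what the paper does (referring to the proof of Lemma~\ref{lemma: rigidification is alg space}). Your write-up is in fact slightly more explicit in the Riemann--Roch bookkeeping than the paper's.
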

\begin{proof}
The point $x$ represents a logarithmic $t_a$-connection $(\mathcal{F}, \nabla)$. By the hypercohomology spectral sequence and Riemann-Roch, we get the following formula for the Euler characteristic
\[ \chi\left(C^{\bullet}(x)\right) = \chi(\mathcal{E}nd(\mathcal{F})) + \chi\left(\mathcal{E}nd(\mathcal{F}) \otimes \Omega^1_{C_a/a}(D_a)\right) = -n^2(2g-2 + \text{deg}(D_a)) \]
Since by definition $\chi\left(C^{\bullet}(x)\right) = \mathbb{H}^0\left(C^{\bullet}(x)\right) - \mathbb{H}^1\left(C^{\bullet}(x)\right) + \mathbb{H}^2(\left(C^{\bullet}(x)\right))$, we get
\[ \text{dim}\left(\mathbb{H}^1\left(C^{\bullet}(x)\right)\right) = n^2(2g-2 + \text{deg}(D_a)) + \text{dim}\left(\mathbb{H}^0\left(C^{\bullet}(x)\right)\right) + \text{dim}\left(\mathbb{H}^0\left(C^{\bullet}(x)\right)\right)\]
Using the natural identifications $\mathbb{H}^0\left(C^{\bullet}(x)\right) \cong \text{End}(x)$, $\mathbb{H}^1\left(C^{\bullet}(x)\right) \cong T_{(\mathcal{M}{\rm{Hodge}}_{C_B}^{ss})_a, x}$ and $\mathbb{H}^2\left(C^{\bullet}(x)\right) \cong \mathcal{Q}_x$ yields the desired formula
\[ \text{dim}\left(T_{(\mathcal{M}{\rm{Hodge}}_{C_B}^{ss})_a, x}\right) = n^2(2g-2 + {\rm{deg}}(D_a)) + \text{dim}\left({\rm{End}}(x)\right) + \text{dim}(\mathcal{Q}_{x})\]
In the special case when $n$ and $d$ are coprime, then the space of endomorphisms $\text{End}(x)$ is one dimensional, consisting of the scalar endomorphisms of $\mathcal{F}$ (cf. the proof of Lemma \ref{lemma: rigidification is alg space}). Hence we can set $\text{dim}\left({\rm{End}}(x)\right) =1$.
\end{proof}
\end{subsection}
\end{section}

\begin{section}{Smoothness and irreducibility of the moduli space}\label{sec:smirr}

 \begin{subsection}{Reduction to the smoothness of the stack}\label{subs:redsmst}

There is a central copy of $\mathbb{G}_m$ in the automorphisms of every point of $\mathcal{M}{\rm{Hodge}}_{C_B}^{ss}$, because multiplication by constants commutes with any logarithmic $t$-connection. Therefore, we can form the $\mathbb{G}_m$-rigidification $(\mathcal{M}{\rm{Hodge}}^{ss}_{})^{rig}$, as in
\cite[Appendix A]{abramovich2007tame}. By the proof of \cite[Thm. A.1]{abramovich2007tame}, there is a smooth cover $U \to (\mathcal{M}{\rm{Hodge}}^{ss}_{C_B})^{rig}$ by a scheme $U$ and a Cartesian diagram
\begin{figure}[H]
\centering
\begin{tikzcd}
  B(\mathbb{G}_{m,U}) \ar[r] \ar[d] & \mathcal{M}{\rm{Hodge}}^{ss}_{C_B} \ar[d] \\ U \ar[r] & (\mathcal{M}{\rm{Hodge}}^{ss}_{C_B})^{rig}
\end{tikzcd}
\end{figure}
Since the left vertical arrow $B(\mathbb{G}_{m,U}) \to U$ is a smooth good moduli space morphism, and being a good moduli space morphism can be checked \'etale locally on the target, it follows that the rigidification morphism $\mathcal{M}{\rm{Hodge}}^{ss}_{C_B} \to (\mathcal{M}{\rm{Hodge}}^{ss}_{C_B})^{rig}$ is a smooth good moduli space morphism. In particular, since being Noetherian can be checked smooth locally, it also follows that $(\mathcal{M}{\rm{Hodge}}^{ss}_{C_B})^{rig}$ is Noetherian.
\begin{lemma} \label{lemma: rigidification is alg space} Assume that the rank $n$ and degree $d$ are coprime. Then $(\mathcal{M}{\rm{Hodge}}^{ss}_{C_B})^{rig}$ is an algebraic space.
\end{lemma}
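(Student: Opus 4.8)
Write $\mathcal{X} := \mathcal{M}{\rm{Hodge}}^{ss}_{C_B}$ and $\mathcal{X}^{rig} := (\mathcal{M}{\rm{Hodge}}^{ss}_{C_B})^{rig}$. The plan is to show that $\mathcal{X}^{rig}$ has trivial inertia and then to invoke the standard fact that an algebraic stack with trivial inertia is an algebraic space. By construction the rigidification morphism $\mathcal{X} \to \mathcal{X}^{rig}$ is a $\mathbb{G}_m$-gerbe (see \cite{abramovich2007tame} and the Cartesian diagram above), so for every field $K$ and every $K$-point $x = (\mathcal{F},\nabla)$ of $\mathcal{X}$ the automorphism group scheme of the induced point $\overline{x}$ of $\mathcal{X}^{rig}$ is $\underline{\Aut}_{\mathcal{X}^{rig},K}(\overline{x}) = \underline{\Aut}_{\mathcal{X},K}(\mathcal{F},\nabla)/\mathbb{G}_{m,K}$, the quotient by the central copy of $\mathbb{G}_m$ being rigidified, namely the scalar automorphisms. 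The entire statement therefore reduces to proving that $\underline{\Aut}_{\mathcal{X},K}(\mathcal{F},\nabla)$ equals this central $\mathbb{G}_{m,K}$ of scalars, for every field $K$ and every such pair.

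To prove this, I would identify $\underline{\Aut}_{\mathcal{X},K}(\mathcal{F},\nabla)$ with the group scheme of units of the finite-dimensional $K$-algebra $\underline{\End}_{\mathcal{X},K}(\mathcal{F},\nabla)$ of endomorphisms of $\mathcal{F}$ commuting with $\nabla$; in the notation of \S\ref{sec:obs-cl} and of the subsection on the relative tangent space this algebra is $\mathbb{H}^0(C_K, C^\bullet(x)) = \ker\big(H^0(\mathcal{E}nd(\mathcal{F})) \xrightarrow{H^0(\varphi_x)} H^0(\mathcal{E}nd(\mathcal{F}) \otimes \omega_{C_K/K}(D_K))\big)$, and it visibly contains $K\cdot\mathrm{id}$. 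Since base change along $K \to \overline{K}$ is flat, taking $H^0$ of the two-term complex $C^\bullet(x)$ commutes with it (the $H^0$-analogue of Lemma \ref{lemma: base change cech cohomology lemma}), so it is enough to show that the corresponding algebra over $\overline{K}$ is one-dimensional. This is exactly where coprimality of $n$ and $d$ enters: on the geometric curve $C_{\overline{K}}$ semistability forces stability, so the standard slope-squeezing argument applies. Indeed, a nonzero $\theta$ commuting with $\nabla$ has $\ker\theta$ and $\operatorname{im}\theta$ both $\nabla$-invariant subbundles of $\mathcal{F}_{\overline{K}}$, and regarding $\operatorname{im}\theta$ at once as a subobject (so $\mu(\operatorname{im}\theta) \le \mu(\mathcal{F})$, with equality only if $\operatorname{im}\theta = \mathcal{F}$) and as a quotient of $\mathcal{F}$ (so $\mu(\operatorname{im}\theta) \ge \mu(\mathcal{F})$, with equality only if $\ker\theta = 0$) forces $\theta$ to be an isomorphism. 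Hence the algebra over $\overline{K}$ is a finite-dimensional division algebra over the algebraically closed field $\overline{K}$, so it equals $\overline{K}$; therefore $\dim_K\underline{\End}_{\mathcal{X},K}(\mathcal{F},\nabla) = 1$, giving $\underline{\End}_{\mathcal{X},K}(\mathcal{F},\nabla) = K\cdot\mathrm{id}$ and $\underline{\Aut}_{\mathcal{X},K}(\mathcal{F},\nabla) = \mathbb{G}_{m,K}$.

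Granting this, $\underline{\Aut}_{\mathcal{X}^{rig},K}(\overline{x})$ is the trivial group scheme $\Spec K$ for every field $K$. The inertia morphism $\mathcal{I}_{\mathcal{X}^{rig}} \to \mathcal{X}^{rig}$ is representable and of finite type, and all of its fibres over fields are single reduced points; it is therefore unramified and universally injective, hence a monomorphism, and since it admits the identity section it must be an isomorphism. Thus $\mathcal{X}^{rig}$ has trivial inertia and is an algebraic space. (Alternatively: the computation above shows that the diagonal of $\mathcal{X}^{rig}$ is unramified, so $\mathcal{X}^{rig}$ is Deligne--Mumford, and a Deligne--Mumford stack whose geometric points have trivial automorphism groups is an algebraic space.) The only step carrying real content is the slope-squeezing argument yielding $\underline{\End} = K$, and I do not expect a genuine obstacle there; the one point to be careful about is that one must pin down the automorphism \emph{group scheme} over an arbitrary field, not merely its set of points, which is why the flat base change to $\overline{K}$ is used.
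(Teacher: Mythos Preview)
Your argument is correct and follows the same overall strategy as the paper: show that the rigidification has trivial inertia by proving that every (semi)stable $t$-connection over a field has automorphism group scheme equal to the scalar $\mathbb{G}_m$, then conclude via \cite[\href{https://stacks.math.columbia.edu/tag/04SZ}{Tag 04SZ}]{stacks-project}.

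The execution differs from the paper's in two places, and in both your route is a little more direct. First, the paper argues in two steps that $\Aut(x)=\mathbb{G}_m$: it shows the inclusion $\mathbb{G}_m\hookrightarrow\Aut(x)$ is a bijection on $k$-points by the standard stability argument, and then separately checks that the Lie algebra of $\Aut(x)$ is one-dimensional to rule out nonreducedness. You instead compute $\End(x)=K$ as a one-dimensional algebra (via the same slope argument, after flat base change to $\overline{K}$), which pins down the unit group \emph{scheme} as $\mathbb{G}_{m,K}$ in one stroke. Second, to pass from ``trivial on field-valued points'' to ``trivial inertia,'' the paper isolates a general Lemma \ref{lemma: triviality algebraic groups spaces on points} proved via the fiberwise criterion for flatness; you use instead that a finite-type morphism with all geometric fibers a single reduced point is unramified and universally injective, hence a monomorphism (\cite[\href{https://stacks.math.columbia.edu/tag/06ND}{Tag 06ND}]{stacks-project}), and a monomorphism with a section is an isomorphism. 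Either route works; yours avoids introducing an auxiliary lemma. One small point to keep in mind: not every $K$-point of $\mathcal{X}^{rig}$ need lift to $\mathcal{X}$ for arbitrary $K$, but since your final step only requires control of geometric fibers (where $\mathbb{G}_m$-gerbes trivialize), this is harmless.
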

\begin{proof} 
We need to show that inertia is trivial \cite[\href{https://stacks.math.columbia.edu/tag/04SZ}{Tag 04SZ}]{stacks-project}. This means that for every $T$-point $p^{rig}: T \to (\mathcal{M}{\rm{Hodge}}^{ss}_{C_B})^{rig}$, we need to show that the group algebraic space of automorphisms $\text{Aut}(p^{rig}) \to T$ is trivial. Since $(\mathcal{M}{\rm{Hodge}}^{ss}_{C_B})^{rig}$ is locally Noetherian, we can without loss of generality take $T$ to be Noetherian. By Lemma \ref{lemma: triviality algebraic groups spaces on points} below, it suffices to show that the fibers of $\text{Aut}(p^{rig})$ over any geometric point of $T$ are trivial, and therefore we can assume without loss of generality that $T= Spec(k)$ for an algebraically closed field $k$. Then $p^{rig}$ is a $k$-point coming from a point $p \in \mathcal{M}{\rm{Hodge}}_{C_B}^{ss}$. The group automorphisms of $p^{rig}$ is just the quotient of the group automorphisms $\text{Aut}(p)$ by the central $\mathbb{G}_m$. Therefore it suffices to show that the group scheme of automorphisms of any point $p \in \mathcal{M}{\rm{Hodge}}_{C_B}^{ss}$ is equal to the constant scalars $\mathbb{G}_m$.

The automorphisms of a pair $x = (\mathcal{F}, \nabla)\in \mathcal{M}{\rm{Hodge}}_{C_B}^{ss}(k)$ consists of the automorphisms of the vector bundle $\mathcal{F}$ that commute with the logarithmic $t$-connection $\nabla: \mathcal{F} \to \mathcal{F} \otimes \omega_C(D)$. We have a closed immersion of algebraic groups $\mathbb{G}_m \subset \text{Aut}(x)$. Since $x$ is stable (which is the same as semistable because $n$ and $d$ are coprime), the usual argument (cf. \cite[pg. 90]{Simpson-repnI}) shows that $\mathbb{G}_m \hookrightarrow \text{Aut}(x)$ induces a bijection at the level of $k$-points, and so $\mathbb{G}_m$ must be the reduced subgroup scheme of $\text{Aut}(x)$. To show equality of schemes, it suffices to show that the scheme of automorphisms $\text{Aut}(x)$ is smooth over $k$, which would follow if we can prove that the Lie algebra of the group scheme of automorphisms is one-dimensional. But standard deformation theory shows that the Lie algebra consists of endomorphisms of $\mathcal{F}$ that commute with $\nabla$. By the same argument this just consists of the one-dimensional space of constant endomorphisms, as desired. 
\end{proof}

\begin{lemma} \label{lemma: triviality algebraic groups spaces on points}
Let $T$ be a Noetherian scheme, and let $G$ be an group algebraic space of finite type over $T$. Suppose that for all geometric points $\overline{t} \in T$, the fiber $G_{\overline{t}}$ is the trivial group scheme over $\overline{t}$. Then $G$ is the trivial group scheme over $T$.
\end{lemma}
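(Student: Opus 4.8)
The plan is to prove that the identity section $e\colon T\to G$ is an isomorphism; this is in fact the only feature of the group structure that we use. Since being an isomorphism may be checked over an affine open cover of $T$, and since the hypotheses persist under base change along an open immersion $U\hookrightarrow T$, we may assume $T=\Spec(R)$ with $R$ Noetherian. The first step is to upgrade the hypothesis on \emph{geometric} fibers to the statement that \emph{every} scheme-theoretic fiber $G_t$, $t\in T$, equals $\Spec(\kappa(t))$. Given $t$, choose a geometric point $\overline{t}$ lying over $\Spec(\kappa(t))$; base-changing the identity section $e_t\colon\Spec(\kappa(t))\to G_t$ along the faithfully flat morphism $\Spec(k(\overline{t}))\to\Spec(\kappa(t))$ produces the identity section of the trivial group $G_{\overline{t}}=\Spec(k(\overline{t}))$, which is an isomorphism. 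By faithfully flat descent of the property of being an isomorphism (e.g. \cite[\href{https://stacks.math.columbia.edu/tag/02L4}{Tag 02L4}]{stacks-project}), $e_t$ is an isomorphism, i.e. $G_t=\Spec(\kappa(t))$.

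Next I would show that $G\to T$ is a universal homeomorphism. It is surjective because it admits the section $e$; by the previous step every fiber is a single reduced point, so the underlying map $|G|\to|T|$ is a bijection whose inverse is the continuous map $|e|$, hence a homeomorphism. Both ``surjective'' and ``all fibers are single reduced points'' survive an arbitrary base change $T'\to T$ — for $t'\mapsto t$ one has $(G\times_T T')_{t'}=G_t\times_{\kappa(t)}\kappa(t')=\Spec(\kappa(t'))$ — so the homeomorphism is universal, and in particular $G\to T$ is universally injective. I would then invoke the characterization of universal homeomorphisms (\cite[\href{https://stacks.math.columbia.edu/tag/04DF}{Tag 04DF}]{stacks-project}, in the form valid for algebraic spaces, reducing to the scheme case via an \'etale atlas if desired): a universal homeomorphism is integral, in particular affine, so $G$ is a scheme, affine over $T$, say $G=\Spec(A)$ with $R\to A$ integral; since $G\to T$ is moreover of finite type, $R\to A$ is finite.

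To conclude, the identity section exhibits $R\to A$ as a split monomorphism of $R$-modules, so $A=R\oplus I$ with $I$ a finite $R$-module. For every prime $\mathfrak{p}\subset R$ the fiber computation of the first step gives $A\otimes_R\kappa(\mathfrak{p})=\kappa(\mathfrak{p})$, hence $I\otimes_R\kappa(\mathfrak{p})=0$; as $I$ is finitely generated, Nakayama's lemma forces $I_{\mathfrak{p}}=0$ for all $\mathfrak{p}$, so $I=0$, $A=R$, and $e$ is an isomorphism; thus $G$ is the trivial group scheme over $T$. The one genuinely delicate point is the passage from information about geometric fibers to a global statement without an a priori flatness hypothesis — this is precisely what the universal-homeomorphism observation and the ensuing finiteness provide, after which the argument collapses to Nakayama. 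The only bookkeeping to be careful about is confirming the cited implication ``universal homeomorphism $\Rightarrow$ integral'' in the generality of algebraic spaces, or else reducing to schemes along an \'etale atlas before applying it.
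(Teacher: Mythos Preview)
Your argument is correct in outline, but the point you flag as ``bookkeeping'' is genuinely the whole difficulty, and your suggested workaround does not work: if $U\to G$ is an \'etale atlas, the composite $U\to T$ is \'etale and almost never a universal homeomorphism, so you cannot transport Tag~04DF along it. The implication ``universal homeomorphism of algebraic spaces $\Rightarrow$ integral'' is a theorem (due to Rydh, and recorded in the Stacks project), but it is not a formal consequence of the scheme case; once you invoke it, the Nakayama step is of course immediate.

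The paper takes a different and more elementary route after the same first step (upgrading from geometric fibers to all fibers). Rather than analysing $G\to T$, it works directly with the section $e\colon T\to G$: pass to an \'etale atlas $X\to G$ and look at the base change $e_X\colon T\times_G X\to X$, a monomorphism of schemes. Its source is \'etale (hence flat) over $T$, and over each $t\in T$ the map $(e_X)_t$ is an isomorphism; the fiberwise criterion for flatness (Tag~05VK) then shows $e_X$ is flat, hence an open immersion, and surjectivity on $T$-fibers finishes. This sidesteps any structural theorem about universal homeomorphisms of algebraic spaces. What your approach buys, once one accepts Rydh's theorem as a black box, is a cleaner conceptual picture: $G\to T$ is forced to be finite, and then everything is commutative algebra.
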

\begin{proof}
Let $e: T \to G$ denote the identity section. We know that for all geometric points $\overline{t} \in T$, the restriction $e_{\overline{t}}: \overline{t} \to G_{\overline{t}}$ is an isomorphism. Since the property of being an isomorphism can be checked flat locally, this actually implies that for all points $t \in T$ we have that $e_t$ is an isomorphism. We want to conclude that $e$ is an isomorphism. Since this statement is \'etale local on $G$, after choosing an \'etale atlas $X \to G$, we just need to show that the monomorphism $e_{X}: T \times_{G} X \to X$ is an isomorphism. Consider the following commutative diagram of schemes.
\begin{figure}[H]
\centering
\begin{tikzcd}
  T \times_{G} X \ar[r, "e_X"] \ar[dr] & X \ar[d] \\  & T
\end{tikzcd}
\end{figure}
By assumption, for every point $t \in T$, the restriction $(e_{X})_t: t \times_{e} X_t \to X_t$ is an isomorphism. Note that $T \times_{e} X \to T$ is \'etale, and hence flat. By the fiberwise criterion for flatness \cite[\href{https://stacks.math.columbia.edu/tag/05VK}{Tag 05VK}]{stacks-project}, we conclude that $e_{X}$ is flat. So $e_{X}$ is a flat monomorphism of finite type, and hence an open immersion. We also know that $e_{X}$ is surjective, because it is an isomorphism over every point of $T$. Therefore $e_{X}$ is an isomorphism, as desired.
\end{proof}
\begin{lemma} \label{lemma:gm gerbe}
 Assume that the rank $n$ and degree $d$ are coprime.
There is an isomorphism $M{\rm{Hodge}}_{C_B}^{ss} \cong (\mathcal{M}{\rm{Hodge}}^{ss}_{C_B})^{rig}$. In particular the morphism $\mathcal{M}{\rm{Hodge}}_{C_B}^{ss} \to M{\rm{Hodge}}_{C_B}^{ss}$ is a smooth good moduli space morphism. 
\end{lemma}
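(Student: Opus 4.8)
The plan is to identify the two algebraic spaces by a soft uniqueness argument, leveraging the fact that the substantive input — that $(\mathcal{M}{\rm{Hodge}}^{ss}_{C_B})^{rig}$ is an algebraic space and that $\mathcal{M}{\rm{Hodge}}_{C_B}^{ss}\to(\mathcal{M}{\rm{Hodge}}^{ss}_{C_B})^{rig}$ is a smooth good moduli space morphism — has already been established in \S\ref{subs:redsmst}. First I would note that, because $M{\rm{Hodge}}_{C_B}^{ss}$ is a scheme and therefore has trivial inertia, the central automorphism $\mathbb{G}_m$ at every point of $\mathcal{M}{\rm{Hodge}}_{C_B}^{ss}$ is sent to the identity by the adequate moduli space morphism $q\colon\mathcal{M}{\rm{Hodge}}_{C_B}^{ss}\to M{\rm{Hodge}}_{C_B}^{ss}$ of \S\ref{subsection:Hodge with poles}. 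By the universal property of $\mathbb{G}_m$-rigidification (cf.\ \cite[Appendix A]{abramovich2007tame}), $q$ then factors uniquely through a morphism $g\colon(\mathcal{M}{\rm{Hodge}}^{ss}_{C_B})^{rig}\to M{\rm{Hodge}}_{C_B}^{ss}$ of $\mathbb{A}^1_B$-algebraic spaces.

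Next I would show $g$ is an isomorphism by invoking uniqueness of adequate moduli spaces. On one hand, \S\ref{subs:redsmst} gives that $\mathcal{M}{\rm{Hodge}}_{C_B}^{ss}\to(\mathcal{M}{\rm{Hodge}}^{ss}_{C_B})^{rig}$ is a good moduli space morphism, hence an adequate moduli space morphism in the sense of \cite{alper_adequate}; on the other hand $q$ is an adequate moduli space morphism by construction. Since an adequate moduli space of a given stack is unique up to unique isomorphism compatible with the structure morphisms, the comparison map $g$ must be that isomorphism. (Equivalently, the universal property of the adequate moduli space $M{\rm{Hodge}}_{C_B}^{ss}$ applied to $\mathcal{M}{\rm{Hodge}}_{C_B}^{ss}\to(\mathcal{M}{\rm{Hodge}}^{ss}_{C_B})^{rig}$ produces an inverse to $g$, and uniqueness in both universal properties forces the two composites to be identities.) A more pedestrian alternative would be to descend cohomological affineness along the $\mathbb{G}_m$-gerbe to see that $g$ is cohomologically affine, then check separatedness and quasi-finiteness so that $g$ is affine, and finally verify that $g$ is an isomorphism after base change to geometric points; but the uniqueness route is cleaner and base-independent.

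Finally, the ``in particular'' clause is immediate once $g$ is known to be an isomorphism: \S\ref{subs:redsmst} tells us $\mathcal{M}{\rm{Hodge}}_{C_B}^{ss}\to(\mathcal{M}{\rm{Hodge}}^{ss}_{C_B})^{rig}$ is a smooth good moduli space morphism, and transporting this along $g$ shows that $q\colon\mathcal{M}{\rm{Hodge}}_{C_B}^{ss}\to M{\rm{Hodge}}_{C_B}^{ss}$ is a smooth good moduli space morphism. I expect the only delicate point — a mild one — to be the choice of which moduli-space formalism to invoke: over a general Noetherian base one does not know a priori that the GIT space is a \emph{good} (as opposed to merely \emph{adequate}) moduli space, so the comparison with the rigidification must be run through the adequate moduli space formalism of \cite{alper_adequate}, with the sharper ``good'' statement — and smoothness — deduced for $M{\rm{Hodge}}_{C_B}^{ss}$ only afterwards, via $g$.
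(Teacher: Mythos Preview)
Your argument is correct and essentially parallel to the paper's, though the final step differs. Both you and the paper produce the comparison morphism $g=\psi\colon(\mathcal{M}{\rm{Hodge}}^{ss}_{C_B})^{rig}\to M{\rm{Hodge}}_{C_B}^{ss}$ (the paper via the universal property of good moduli spaces, noting the rigidification universal property would work equally well), and both rely on the already-established facts that the rigidification morphism is a smooth good moduli space morphism and that the rigidification is an algebraic space. Where you then invoke uniqueness of adequate moduli spaces directly, the paper instead argues hands-on: using \cite[Prop.~4.5]{alper-good-moduli} and exactness of $f^*$ it shows $\psi$ is adequately affine, hence affine by \cite[Thm.~4.3.1]{alper_adequate} since the source is an algebraic space, and then concludes $\psi$ is an isomorphism because $\psi_*\cO=\cO$ by the defining pushforward property of adequate moduli spaces. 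Your route is cleaner and more conceptual; the paper's route is more self-contained in that it does not cite the universal property of adequate moduli spaces as a black box. Your closing remark about needing the \emph{adequate} formalism (rather than good) is exactly the point.

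One small caution: your ``pedestrian alternative'' is close to what the paper actually does, but the version you sketch---checking that $g$ becomes an isomorphism on geometric points after knowing it is affine---would not by itself suffice (an affine bijection need not be an isomorphism over a nonreduced or positive-characteristic base). The paper's version avoids this by concluding via $\psi_*\cO\cong\cO$, which is the right finishing move if one goes that way.
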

\begin{proof}
By the universal property of good moduli spaces, we have a canonical morphism $\psi: (\mathcal{M}{\rm{Hodge}}^{ss}_{C_B})^{rig} \to M{\rm{Hodge}}_{C_B}^{ss}$ (in this case we could have also used the universal property of rigidifications). By applying \cite[Prop. 4.5]{alper-good-moduli} to the good moduli space morphism $f:\mathcal{M}{\rm{Hodge}}_{C_B}^{ss} \to (\mathcal{M}{\rm{Hodge}}^{ss}_{C_B})^{rig}$, we see that we have a natural isomorphism of functors $\psi_*(-) \simeq \psi_* f_* f^*(-)$. Since  $\psi \circ f$ is adequately affine (cf. \S\ref{subsection:Hodge with poles})
and  $f^*$ is exact by the smoothness of $f,$ we see that $\psi$ is adequately affine as well. Since $(\mathcal{M}{\rm{Hodge}}_{C_B}^{ss})^{rig}$ is an algebraic space, the morphism $\psi$ is affine \cite[Thm. 4.3.1]{alper_adequate}. Hence, it is an isomorphism because $\psi_*(\cO_{(\mathcal{M}{\rm{Hodge}}^{ss}_{C_B})^{rig}}) = \psi_* f_*(\cO_{\cM{\rm{Hodge}}^{ss}_{C_B}}) = \cO_{M{\rm{Hodge}}^{ss}_{C_B}}$ by the pushforward property in the definition of adequate moduli space morphism \cite[Defn. 5.1.1 (2)]{alper_adequate}. It follows that $\mathcal{M}{\rm{Hodge}}_{C_B}^{ss} \to (\mathcal{M}{\rm{Hodge}}^{ss}_{C_B})^{rig} \cong M{\rm{Hodge}}_{C_B}^{ss}$ is a smooth good moduli space morphism.
\end{proof}
\begin{remark} \label{remark: universal correpresentability}
Since good moduli space morphisms commute with arbitrary base-change, it follows from Lemma \ref{lemma:gm gerbe} that the formation of the good moduli space $M{\rm{Hodge}}_{C_B}^{ss}$ commutes with arbitrary base change over $\mathbb{A}^1_{B}$. This also follows from \cite[Thm. 1.1]{langer-moduli-lie-algebroids}. Note that, a priori, the formation of the GIT quotient in arbitrary characteristic is only known to commute with flat base change.
\end{remark}

\begin{coroll} \label{coroll: moduli space vs stack}
If the rank $n$ and degree $d$ are coprime, the moduli space $M{\rm{Hodge}}_{C_B}^{ss}$ is smooth over $\mathbb{A}^1_{B}$ if and only if the stack $\mathcal{M}{\rm{Hodge}}_{C_B}^{ss}$ is smooth over $\mathbb{A}^1_B$.
\end{coroll}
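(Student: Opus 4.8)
Write $\mathcal{M} := \mathcal{M}{\rm{Hodge}}_{C_B}^{ss}$ and $M := M{\rm{Hodge}}_{C_B}^{ss}$. The plan is to reduce everything to Lemma~\ref{lemma:gm gerbe}, which exhibits the morphism $p\colon \mathcal{M} \to M$ as a \emph{smooth} good moduli space morphism; in particular $p$ is smooth, and, being a good moduli space morphism, it is also surjective. The corollary then becomes the general fact that, for a composite of $\mathbb{A}^1_B$-morphisms $\mathcal{M} \xrightarrow{p} M \xrightarrow{\tau_B} \mathbb{A}^1_B$ with $p$ smooth and surjective, $\tau_B$ is smooth if and only if $\tau_B \circ p$ is.

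For the implication ``$M$ smooth over $\mathbb{A}^1_B$ $\Rightarrow$ $\mathcal{M}$ smooth over $\mathbb{A}^1_B$'' there is nothing to prove beyond composing smooth morphisms: $\tau_B$ is smooth by hypothesis and $p$ is smooth by Lemma~\ref{lemma:gm gerbe}, so $\tau_B \circ p$ is smooth.

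For the converse I would first choose a smooth surjective atlas $W \to \mathcal{M}$ by a scheme, in order to turn the question into one about schemes (recall $M$ is a scheme, being quasiprojective over $\mathbb{A}^1_B$). Then $q\colon W \to M$ is smooth and surjective, being a composite of smooth surjective morphisms, and $W \to \mathbb{A}^1_B$ is smooth, being the composite of $W \to \mathcal{M}$ with the (assumed smooth) structural morphism. It then remains to check the following statement about schemes: if $q\colon W \to M$ is smooth and surjective and $W \to \mathbb{A}^1_B$ is smooth, then $M \to \mathbb{A}^1_B$ is smooth. This is descent of smoothness along the faithfully flat morphism $q$: flatness and local finite presentation of $M \to \mathbb{A}^1_B$ descend along $q$ from the corresponding properties of $W \to \mathbb{A}^1_B$, while for each $b \in \mathbb{A}^1_B$ the fibre $M_b$ receives a smooth surjective cover $W_b \to M_b$ with $W_b$ smooth over $\kappa(b)$; passing to $\overline{\kappa(b)}$ we obtain a faithfully flat cover of $M_{\overline{\kappa(b)}}$ by a regular scheme, so $M_{\overline{\kappa(b)}}$ is regular, i.e. $M_b$ is smooth over $\kappa(b)$. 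Being flat, locally of finite presentation and with smooth fibres, $M \to \mathbb{A}^1_B$ is smooth. Equivalently, one may simply invoke that smoothness is fppf-local on the source.

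The argument is essentially formal. The only point that requires a little attention is this final step, the descent of smoothness along $q$: since it is descent on the \emph{source} of $q$ rather than on its target, it rests on the descent of flatness and of regularity along faithfully flat morphisms, not on naive base change. I do not anticipate any substantive obstacle.
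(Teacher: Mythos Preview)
Your proposal is correct and follows essentially the same route as the paper: invoke Lemma~\ref{lemma:gm gerbe} to obtain that $\mathcal{M}{\rm{Hodge}}_{C_B}^{ss} \to M{\rm{Hodge}}_{C_B}^{ss}$ is smooth (and surjective), then conclude by the fact that smoothness over $\mathbb{A}^1_B$ can be checked smooth-locally on the source. The paper compresses your second paragraph and the descent argument into a single clause, but the content is the same.
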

\begin{proof}
This is immediate from Lemma \ref{lemma:gm gerbe} and the fact that property of being a smooth morphisms can be checked smooth locally.
\end{proof}

\end{subsection}

\begin{subsection}{Proof of the smoothness and dimension assertions in Theorem \ref{thm:smooth}}\label{subs:pfsmooth}

In view of Corollary \ref{coroll: moduli space vs stack}, in order to prove the smoothness assertion in Theorem \ref{thm:smooth}, it suffices to show that the stack $\mathcal{M}{\rm{Hodge}}_{C_B}^{ss}$ is smooth over $\mathbb{A}^1_{B}$. Our proof of smoothness of the stack $\mathcal{M}{\rm{Hodge}}_{C_B}^{ss}$ applies even when the rank and degree are not coprime.

\begin{prop} \label{prop: smoothness of the stack}
Without coprimeness assumptions on the rank $n$ and degree $d$, the morphism of stacks $\mathcal{M}{\rm{Hodge}}_{C_B}^{ss} \to \mathbb{A}^1_B$ is smooth.
\end{prop}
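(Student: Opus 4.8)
The plan is to verify the infinitesimal lifting criterion. Since $\mathcal{M}{\rm Hodge}^{ss}_{C_B}$ is of finite type over the Noetherian scheme $\mathbb{A}^1_B$, it is locally of finite presentation over $\mathbb{A}^1_B$, so it suffices to prove formal smoothness; by the usual reduction this can be tested on square-zero closed immersions $\iota\colon\mathrm{Spec}(A)\hookrightarrow\mathrm{Spec}(\widetilde A)$ of local Artinian $\mathbb{A}^1_B$-algebras, i.e. one must show that every $\mathbb{A}^1_B$-morphism $x_A\colon\mathrm{Spec}(A)\to\mathcal{M}{\rm Hodge}^{ss}_{C_B}$ admits a lift $y_{\widetilde A}$ as in \S\ref{sec:obs-cl}. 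By Proposition \ref{prop: obstruction class} a lift exists whenever the obstruction module $\mathcal{Q}_{x_A}$ vanishes, so it suffices to prove $\mathcal{Q}_{x_A}=0$ for all such $x_A$. By Corollary \ref{coroll: base change obstruction module} the formation of $\mathcal{Q}$ commutes with base change, so $\mathcal{Q}_{x_A}\otimes_A(A/\mathfrak m_A)\cong\mathcal{Q}_{x_{A/\mathfrak m_A}}$; since $\mathcal{Q}_{x_A}$ is finitely generated over the Artinian local ring $A$, Nakayama's lemma together with a faithfully flat base change to an algebraic closure of the residue field reduces us to the following: \emph{for every geometric point $a\colon\mathrm{Spec}(\overline k)\to\mathbb{A}^1_B$ and every semistable logarithmic $t_a$-connection $x=(\mathcal F,\nabla)$ on $C_{\overline k}$, one has $\mathcal{Q}_x=0$.}

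The next step is to reduce to the locus $\{t=0\}$ of logarithmic Higgs bundles. We have $\mathcal{Q}_x=\mathbb{H}^2(C^\bullet(x))=\mathrm{coker}\!\big(H^1(\mathcal{E}nd(\mathcal F))\xrightarrow{H^1(\varphi_x)}H^1(\mathcal{E}nd(\mathcal F)\otimes\omega_{C_{\overline k}}(D))\big)$. As $R^2$ of a relative curve vanishes, cohomology and base change show these cokernels are the fibres of a coherent sheaf $\mathcal{Q}$ on the stack $\mathcal{M}{\rm Hodge}^{ss}_{C_B}$, whose support is therefore a closed substack; it is $\mathbb{G}_m$-invariant because $\mathcal{Q}$ depends only on the $t$-connection and the $\mathbb{G}_m$-action merely rescales $(\nabla,t)$. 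Using the $\mathbb{G}_m$-equivariant trivialization of the Hodge moduli over $\mathbb{G}_{m,B}$, the complex $C^\bullet(x)$ for a $t_a$-connection with $t_a\neq 0$ is isomorphic to the one for the associated logarithmic connection $(\mathcal F,t_a^{-1}\nabla)$, so one may assume $t_a\in\{0,1\}$. For $t_a=1$ the idea is to degenerate inside the Hodge moduli: the $\mathbb{G}_m$-limit $\lim_{s\to 0}s\cdot x$ exists in $M{\rm Hodge}^{ss}$ (properness of the relevant Hitchin-type morphism; in characteristic $0$, Simpson's completeness of the Hodge moduli space for the $\mathbb{G}_m$-action), lies over $0\in\mathbb{A}^1$, hence is a semistable logarithmic Higgs bundle, and is a specialization of $x$ inside $\mathcal{M}{\rm Hodge}^{ss}_{C_B}$; by upper semicontinuity of $\dim\mathcal{Q}$ it then suffices to treat $t_a=0$.

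So let $x=(\mathcal F,\theta)$ be a semistable logarithmic Higgs bundle on $C_{\overline k}$. Now $\varphi_x=\mathrm{ad}(\theta)$ is $\mathcal{O}_{C_{\overline k}}$-linear, so $C^\bullet(x)=[\mathcal{E}nd(\mathcal F)\xrightarrow{\mathrm{ad}(\theta)}\mathcal{E}nd(\mathcal F)\otimes\omega_{C_{\overline k}}(D)]$ is a two-term complex of vector bundles, and Serre duality on the smooth projective curve $C_{\overline k}$ identifies $\mathcal{Q}_x^{\vee}=\mathbb{H}^2(C^\bullet(x))^{\vee}$ with $\ker\!\big(H^0(\mathcal{E}nd(\mathcal F)(-D))\xrightarrow{\mathrm{ad}(\theta)}H^0(\mathcal{E}nd(\mathcal F)\otimes\omega_{C_{\overline k}})\big)$, i.e. with the space of endomorphisms $B$ of $\mathcal F$ that vanish along $D$ and commute with $\theta$. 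If $B\neq 0$, then $\mathrm{im}(B)\subseteq\mathcal F(-D)$ is a $\theta$-invariant subsheaf, and being a quotient of the semistable Higgs bundle $(\mathcal F,\theta)$ it satisfies $\mu(\mathrm{im}(B))\ge\mu(\mathcal F)$. Twisting the inclusion $\mathrm{im}(B)\subseteq\mathcal F(-D)$ by $\mathcal{O}(D)$ and using $\mathcal{O}$-linearity of $\theta$, we find that $\mathrm{im}(B)(D)\subseteq\mathcal F$ is again $\theta$-invariant, of slope $\mu(\mathrm{im}(B))+\deg(D_a)\ge\mu(\mathcal F)+\deg(D_a)>\mu(\mathcal F)$, contradicting semistability since $D$ is nonempty. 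Hence $\mathcal{Q}_x^{\vee}=0$, so $\mathcal{Q}_x=0$; this is the vanishing theorem that uses nonemptiness of $D$. Combining the three steps, $\mathcal{Q}_x=0$ at every geometric point, so $\mathrm{ob}_{x_A}=0$ always, lifts always exist, and $\mathcal{M}{\rm Hodge}^{ss}_{C_B}\to\mathbb{A}^1_B$ is formally smooth, hence smooth.

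The main obstacle is the degeneration step: one must ensure that a semistable logarithmic connection specializes to a semistable logarithmic Higgs bundle \emph{within the moduli stack}, so that semicontinuity of $\dim\mathcal{Q}$ genuinely applies. This depends on the existence of the $\mathbb{G}_m$-limit in the moduli space (from properness of the appropriate Hitchin-type morphism, and classically in characteristic $0$) together with the fact that limits of semistable objects remain semistable. An alternative avoiding the degeneration would be to establish Serre–Verdier duality for the logarithmic de Rham complex of the adjoint connection directly, but the residues of the adjoint connection may have eigenvalues differing by positive integers, which makes that route delicate; passing through the Higgs locus, where the relevant complex is $\mathcal{O}$-linear and the slope argument is available, is the cleaner path.
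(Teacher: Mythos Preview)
Your overall architecture coincides with the paper's: lifting criterion, reduction via Nakayama and base change of the obstruction module to a geometric point, degeneration to the Higgs locus, and a Serre-duality vanishing argument using the nonemptiness of $D$. Two points deserve comment.

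\textbf{The degeneration step (your acknowledged obstacle).} Working with limits in the moduli \emph{space} $M{\rm Hodge}^{ss}$ does not directly give what you need: the obstruction module $\mathcal{Q}$ lives on the \emph{stack}, and semicontinuity of $\dim\mathcal{Q}$ requires an honest family $\mathbb{A}^1\to\mathcal{M}{\rm Hodge}^{ss}$, not merely a limit point in the coarse space (which in the non-coprime case only determines an $S$-equivalence class, not a stack point that is a specialization of $x$). The paper resolves this cleanly: the naive $\mathbb{G}_m$-orbit extends to a map $\mathbb{A}^1_k\to\mathcal{M}{\rm Hodge}$ into the full stack with limit $(\mathcal{F},0)$, which need not be semistable; one then invokes Langer's semistable reduction theorem \cite[Thm.~5.1]{langer2014semistable} to modify this to a $\mathbb{G}_m$-equivariant family $x_{\mathbb{A}^1_k}\colon\mathbb{A}^1_k\to\mathcal{M}{\rm Hodge}^{ss}$. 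The pulled-back obstruction module is then a finitely generated $\mathbb{G}_m$-equivariant (i.e.\ graded) $k[z]$-module $N$ with $N_1=\mathcal{Q}_{x_k}$, and graded Nakayama gives $N_0=0\Rightarrow N=0\Rightarrow N_1=0$. This is the precise mechanism you were looking for, and it works uniformly in all characteristics without appealing to properness of Hitchin-type maps or to Simpson's $\mathbb{G}_m$-completeness.

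\textbf{The Higgs vanishing.} Here your argument is actually more efficient than the paper's. The paper performs a further semicontinuity reduction to \emph{polystable} Higgs bundles and then observes that the endomorphisms commuting with the Higgs field are block-constant matrices on isotypic summands, which cannot vanish along a nonempty $D$. Your slope argument works directly for semistable $(\mathcal{F},\theta)$: if $0\neq B\in H^0(\mathcal{E}nd(\mathcal{F})(-D))$ commutes with $\theta$, then $\mathrm{im}(B)$ is a $\theta$-invariant quotient, so $\mu(\mathrm{im}(B))\ge\mu(\mathcal{F})$, while $\mathrm{im}(B)(D)\subset\mathcal{F}$ is a $\theta$-invariant subsheaf of slope $\mu(\mathrm{im}(B))+\deg(D_a)>\mu(\mathcal{F})$, contradicting semistability. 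This bypasses the polystable reduction entirely and is a genuine simplification.
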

\begin{proof}
We use the lifting criterion for smoothness \cite[\href{https://stacks.math.columbia.edu/tag/0DP0}{Tag 0DP0}]{stacks-project} for the finite type morphism $\mathcal{M}{\rm{Hodge}}_{C_B}^{ss} \to \mathbb{A}^1_B$. Since both the target and the source are locally Noetherian, by \cite[\href{https://stacks.math.columbia.edu/tag/02HT}{Tag 02HT}]{stacks-project} it suffices to show the existence of lifting for square-zero thickenings of local Artin algebras. 

Let $A$ be a local Artin $k$-algebra with maximal ideal $\mathfrak{m}$ and residue field $k$. Fix a morphism $Spec(A) \to \mathcal{M}{\rm{Hodge}}_{C_B}^{ss}$, inducing a composition $Spec(A) \to \mathbb{A}^1_B$, defining a function $t_A \in A.$
Choose a square-zero thickening $\widetilde{A} \to A$ with defining ideal $\widetilde{I}$ (cf.
\S\ref{sec:obs-cl}). Let $Spec(\widetilde{A}) \to \mathbb{A}^1_B$ 
be a choice of a morphism so that $Spec(A) \hookrightarrow Spec(\widetilde{A})$ is a morphism over $\mathbb{A}^1_B$. 
We need to find a lifting as in the dotted arrow below.
	\begin{figure}[H]
\centering
\begin{tikzcd}
  Spec(A) \ar[r] \ar[d, symbol = \hookrightarrow] &  \mathcal{M}{\rm{Hodge}}_{C_B}^{ss} \ar[d] \\ Spec(\widetilde{A}) \ar[r] \ar[ur, dashrightarrow] & \mathbb{A}^1_B.
\end{tikzcd}
\end{figure}
The family $Spec(A) \to \mathcal{M}{\rm{Hodge}}_{C_B}^{ss}$ is represented by a pair $x_A = (\mathcal{F}, \nabla)$, i.e. a logarithmic $t_A$-connection on $C_A$. By Proposition \ref{prop: obstruction class}, in order to show the existence of a lift for this family it suffices to prove that $\mathcal{Q}_{x_A} =0$.

By Nakayama's lemma,
if we show $\mathcal{Q}_{x_A} \otimes_{A} A/\mathfrak{m} = 0,$ then we have
$\mathcal{Q}_{x_A} =0.$ Using the compatibility of the obstruction module with base-change (Corollary \ref{coroll: base change obstruction module}), we see that $\mathcal{Q}_{x_A} \otimes_{A} A/\mathfrak{m} \cong \mathcal{Q}_{x_{A/\mathfrak{m}}}$, where $x_{A/\mathfrak{m}}$ is obtained by pulling-back $x_A$ to $C_{A/\mathfrak{m}}$. Therefore, without loss of generality, we can assume that $A = k$. In particular $x_A = x_k$ is a $k$-point of the stack $\mathcal{M}{\rm{Hodge}}_{C_B}^{ss},$ and $\mathcal{Q}_{x_A}=\mathcal{Q}_{x_k}$ is a
$k$-vector space. We also assume, without loss of generality, that $B = Spec(k)$, and that $k$ is algebraically closed.

Recall that there is a lift of the $\mathbb{G}_{m}$-action on $\mathbb{A}^1_k$ to the stack $\mathcal{M}{\rm{Hodge}}_{C_k}^{ss}$, given by scaling the universal logarithmic $t$-connection. Starting with our point $x_k$, we consider the morphism $\mathbb{G}_m \to \mathcal{M}{\rm{Hodge}}_{C_k}^{ss}$ induced by the action $y \mapsto y \cdot x_k$. 
In this family, the vector bundle $\mathcal{F}$ remains constant, and we scale the $t$-connection $\nabla$. 
This can be completed to a $\mathbb{G}_m$-equivariant morphism $\mathbb{A}^1_k \to \mathcal{M}{\rm{Hodge}}_{C_k}$ to the stack $\mathcal{M}{\rm{Hodge}}_{C_k}$ of all $t$-connections, with no semistability condition. The image of $0 \in \mathbb{A}^1_k$ is given by the pair $(\mathcal{F}, 0)$ consisting of the vector bundle $\mathcal{F}$ and the zero Higgs-field. 
Using the argument for the ``semistable reduction" theorem in \cite[Thm. 5.1]{langer2014semistable}, we modify $\mathbb{A}^1_k \to \mathcal{M}{\rm{Hodge}}_{C_k}$ to a $\mathbb{G}_m$-equivariant semistable family $x_{\mathbb{A}^1_k}: \mathbb{A}^1_k \to \mathcal{M}{\rm{Hodge}}_{C_k}^{ss}$.
Consider the corresponding $\mathbb{G}_m$-equivariant $k[z]$-module of finite type $N:=\mathcal{Q}_{x_{\mathbb{A}^1_k}}$, where $z$ is the coordinate of $\mathbb{A}^1_k$. By construction 
the fiber over $1\in \mathbb{A}^1_k$ is the $k[z]/(z-1)$-module $N_1=\mathcal{Q}_{x_k}$ that we are interested in. To show $N_1=0$, it suffices
to show that we have $N_0=0$ for the fiber at $0$.
If $x_0$ denotes the image of $0$ under $x_{\mathbb{A}^1_k}$, then $N_0 = \mathcal{Q}_{x_0}$. In this case,
$x_0$ lies on the $0$-fiber of the stack $\mathcal{M}{\rm{Hodge}}_{C_k}^{ss}$, and so it is represented by a logarithmic Higgs bundle $(\mathcal{F}_0, \nabla_0).$ 
Hence, the fact that $N_0=\mathcal{Q}_{x_0}=0$ follows from the computation for Higgs bundles given in Lemma \ref{lemma: obstruction module higgs bundles} below.

\end{proof}

\begin{lemma} \label{lemma: obstruction module higgs bundles}
Suppose $B = Spec(k)$ for an algebraically closed field $k$. Let $x_0$ be a $k$-point of $\mathcal{M}{\rm{Hodge}}_{C_k}^{ss}$ in the $0$-fiber over $\mathbb{A}^1_k$, represented by a semistable logarithmic Higgs bundle $x_0 = (\mathcal{F}, \nabla)$. Then $\mathcal{Q}_{x_0} = 0$.
\end{lemma}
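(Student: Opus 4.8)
The plan is to identify $\mathcal{Q}_{x_0}$, via Serre duality, with a space of morphisms in the category of logarithmic Higgs bundles, and then to kill that space by a slope comparison that uses the nonemptiness of $D$.

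\emph{Step 1: rewrite $\mathcal{Q}_{x_0}$ as the top hypercohomology of a two--term complex of sheaves.} Since $x_0$ lies over $0\in\mathbb{A}^1_k$, the relevant $t$ is $0$, so $\nabla$ is an $\mathcal{O}_{C_k}$--linear (logarithmic) Higgs field and the morphism $\varphi_{x_0}\colon \mathcal{E}nd(\mathcal{F})\to\mathcal{E}nd(\mathcal{F})\otimes_{\mathcal{O}_{C_k}}\omega_{C_k/k}(D_k)$, $\theta\mapsto[\nabla,\theta]=\nabla\circ\theta-\theta\circ\nabla$, is $\mathcal{O}_{C_k}$--linear. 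Hence $C^{\bullet}(x_0)$ is a genuine complex of coherent sheaves on the curve $C_k$, and by the hypercohomology spectral sequence (using $H^2$ of a coherent sheaf on a curve vanishes) one has $\mathcal{Q}_{x_0}\cong\mathbb{H}^2\!\left(C_k,C^{\bullet}(x_0)\right)$, exactly as recorded just before the statement.

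\emph{Step 2: dualize.} I would apply Serre duality on the smooth projective curve $C_k$ to the complex $C^{\bullet}(x_0)=[\mathcal{E}nd(\mathcal{F})\xrightarrow{[\nabla,-]}\mathcal{E}nd(\mathcal{F})\otimes\omega_{C_k/k}(D_k)]$: $\mathbb{H}^2(C_k,C^{\bullet}(x_0))^{\vee}$ is computed by $\mathcal{RHom}_{\mathcal{O}_{C_k}}(C^{\bullet}(x_0),\omega_{C_k/k})$. Using the trace pairing self-duality $\mathcal{E}nd(\mathcal{F})^{\vee}\cong\mathcal{E}nd(\mathcal{F})$ and $\omega_{C_k/k}(D_k)^{\vee}\otimes\omega_{C_k/k}\cong\mathcal{O}_{C_k}(-D_k)$, this dual complex is (a shift of) $[\mathcal{E}nd(\mathcal{F})(-D_k)\xrightarrow{\ [\nabla,-]\ }\mathcal{E}nd(\mathcal{F})\otimes\omega_{C_k/k}]$, where the differential is the transpose of $[\nabla,-]$, which by cyclicity of the trace is again (up to sign) the commutator with $\nabla$, now acting on $\eta\in\mathcal{H}om(\mathcal{F},\mathcal{F}(-D_k))$ and valued in $\mathcal{E}nd(\mathcal{F})\otimes\omega_{C_k/k}$. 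Its $\mathbb{H}^0$ is $H^0$ of the kernel sheaf, that is
\[
\mathcal{Q}_{x_0}^{\vee}\;\cong\;\bigl\{\,\eta\in\operatorname{Hom}_{\mathcal{O}_{C_k}}(\mathcal{F},\mathcal{F}(-D_k))\ :\ \nabla\circ\eta=\eta\circ\nabla\,\bigr\}\;=\;\operatorname{Hom}\bigl((\mathcal{F},\nabla),\,(\mathcal{F}(-D_k),\nabla(-D_k))\bigr),
\]
the Hom--space in the category of logarithmic Higgs bundles with poles along $D_k$, where $\nabla(-D_k):=\nabla\otimes\operatorname{id}_{\mathcal{O}(-D_k)}$.

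\emph{Step 3: slope comparison.} The logarithmic Higgs bundle $(\mathcal{F},\nabla)$ is semistable, of slope $\mu=d/n$. Twisting by the line bundle $\mathcal{O}_{C_k}(-D_k)$ preserves semistability, so $(\mathcal{F}(-D_k),\nabla(-D_k))$ is a semistable logarithmic Higgs bundle of slope $\mu-\deg(D_k)$. Since $D_k$ is nonempty and reduced, $\deg(D_k)\geq 1$, hence $\mu-\deg(D_k)<\mu$. A nonzero morphism $\eta$ as above would have image a nonzero Higgs subsheaf of $(\mathcal{F}(-D_k),\nabla(-D_k))$, so of slope $\leq\mu-\deg(D_k)$, while being at the same time a nonzero Higgs quotient of $(\mathcal{F},\nabla)$, so of slope $\geq\mu$ --- a contradiction. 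Therefore $\mathcal{Q}_{x_0}^{\vee}=0$, so $\mathcal{Q}_{x_0}=0$, as claimed.

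\emph{Expected main obstacle.} The routine parts are Step 1, the fact that twisting by a line bundle preserves semistability, and the vanishing of Hom between semistable objects of decreasing slope. The delicate point is the bookkeeping in Step 2: identifying the Serre--transpose of the commutator operator $[\nabla,-]$, under the trace self-duality of $\mathcal{E}nd(\mathcal{F})$, with the commutator operator valued in the down--twisted sheaf, and correctly tracking the various twists by $\mathcal{O}(D_k)$ and $\omega_{C_k/k}$; this is where the hypothesis that $D$ has nonempty fibers ultimately enters, through $\deg(D_k)\geq 1$.
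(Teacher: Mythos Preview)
Your proof is correct, and the Serre duality computation in Step~2 (identifying $\mathcal{Q}_{x_0}^{\vee}$ with the space $\mathcal{K}_D$ of Higgs morphisms $(\mathcal{F},\nabla)\to(\mathcal{F}(-D_k),\nabla(-D_k))$) is essentially the same as the paper's, which carries this out via the transposition isomorphism $\mathcal{E}nd(\mathcal{F})\cong\mathcal{E}nd(\mathcal{F})^{\vee}$ and arrives at the same kernel.

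The difference lies in how the vanishing of $\mathcal{K}_D$ is established. The paper first uses a semicontinuity/degeneration argument to reduce to the case where $x_0$ is \emph{polystable}, then observes that $\mathcal{K}_D$ sits inside the endomorphism algebra $\mathcal{G}=\operatorname{End}(x_0)$ as the subspace of endomorphisms vanishing along~$D$; since $\mathcal{G}$ for a polystable object is a product of matrix algebras acting by constants on isotypic components, any nonzero element is nowhere vanishing, and $D\neq\varnothing$ forces $\mathcal{K}_D=0$. Your Step~3 instead dispenses with the polystable reduction entirely and finishes directly from semistability: twisting by $\mathcal{O}(-D_k)$ drops the slope by $\deg(D_k)\geq 1$, and there are no nonzero Higgs morphisms between semistable objects of strictly decreasing slope. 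This is a genuinely cleaner endgame --- it avoids both the degeneration step and the explicit description of the endomorphism algebra --- while using the hypothesis $D\neq\varnothing$ in the same essential way.
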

\begin{proof}
By a degeneration and semicontinuity argument, it suffices to prove $\mathcal{Q}_{x_0} =0$ for closed points of the stack. In other words, we can assume that $x_0$ represents a polystable logarithmic Higgs bundle. We need to prove the vanishing of  the cokernel $\mathcal{Q}_{x_0}$ of the  morphism (cf. Def. \ref{defn: obstruction module}):
\[H^1(\mathcal{E}nd(\mathcal{F})) \xrightarrow{H^1(\varphi_{x_0})} H^1( \mathcal{E}nd(\mathcal{F}) \otimes_{\mathcal{O}_{C}} \omega_{C/k}(D_k)).
\]
Under our assumptions, the commutator $\varphi_{x_0}$ is $\mathcal{O}_C$-linear. Therefore we can consider the dual 
twisted morphism  $\varphi_{x_0}^{\vee} \otimes_{\mathcal{O}_{C}} 
\text{id}_{\omega_{C/k}} : \mathcal{E}nd(\mathcal{F})^{\vee}(-D) \to \mathcal{E}nd(\mathcal{F})^{\vee} 
\otimes_{\mathcal{O}_{C}} \omega_{C/k}$. Under the identifications 
provided by Serre duality, the morphism $H^1(\varphi_{x_0})$ is identified with the dual
\[ H^0(\varphi_{x_0}^{\vee} \otimes_{\mathcal{O}_{C}} 
\text{id}_{\omega_{C/k}})^{\vee}: H^0(\mathcal{E}nd(\mathcal{F})^{\vee} \otimes_{\mathcal{O}_{C}} \omega_{C/k})^{\vee} 
\to H^0(\mathcal{E}nd(\mathcal{F})^{\vee}(-D))^{\vee}.\]
Therefore, the $k$-vector space $\mathcal{Q}_{x_0}$ is canonically isomorphic to the dual $\mathcal{K}^{\vee}$ of the following kernel:
\[ \mathcal{K} := \text{ker}\left[ H^0(\mathcal{E}nd(\mathcal{F})^{\vee}(-D))
\xrightarrow{H^0(\varphi_{x_0}^{\vee} 
\otimes_{\mathcal{O}_{C}} \text{id}_{\omega_{C/k}})} 
H^0(\mathcal{E}nd(\mathcal{F})^{\vee} \otimes_{\mathcal{O}_{C}} 
\omega_{C/k}) \right]. \]
We want to show that $\mathcal{K}$ vanishes. Note that there is a transposition isomorphism $\tau: \mathcal{E}nd(\mathcal{F}) \to \mathcal{E}nd(\mathcal{F})^{\vee}$ given by the swap
(transposition of matrices):
\[ \tau: \mathcal{E}nd(\mathcal{F}) = \mathcal{F}^{\vee}\otimes 
\mathcal{F} \xrightarrow{swap} \mathcal{F} 
\otimes \mathcal{F}^{\vee} \to \mathcal{E}nd(\mathcal{F})^{\vee}. \]
This also induces identifications $\mathcal{E}nd(\mathcal{F})^{\vee} \otimes \omega_{C/k} \cong \mathcal{E}nd(\mathcal{F})\otimes \omega_{C/k}$ and $\mathcal{E}nd(\mathcal{F})^{\vee}(-D) \cong \mathcal{E}nd(\mathcal{F})(-D)$. Consider the diagram of $\mathcal{O}_C$-modules:
	\begin{figure}[H]
\centering
\begin{tikzcd}
  \mathcal{E}nd(\mathcal{F})^{\vee}(-D) \; \; \; \;\ar[r, "\varphi_{x_0}^{\vee} \otimes_{\mathcal{O}_C} \text{id}_{\omega_{C/k}}"] \ar[d, "\tau \otimes_{\mathcal{O}_{C}} \text{id}_{\mathcal{O}_{C}(-D)}", labels = left] & \; \; \; \; \mathcal{E}nd(\mathcal{F})^{\vee} \otimes_{\mathcal{O}_{C}} \omega_{C/k} \ar[d, "\tau \otimes_{\mathcal{O}_{C}} \text{id}_{\omega_{C/k}}"] \\  \mathcal{E}nd(\mathcal{F})(-D) \; \; \; \ar[r,"\varphi_{x_0} \otimes_{\mathcal{O}_{C}} \text{id}_{\mathcal{O}_{C}(-D)}", labels = below]  & \; \; \;\mathcal{E}nd(\mathcal{F}) \otimes_{\mathcal{O}_{C}} \omega_{C/k}.
\end{tikzcd}
\end{figure}

 The diagram is commutative by the linear algebra fact that the dual of the commutator morphism of matrices is identified with the commutator morphism itself under transposition.
 

From the commutativity of the diagram, we see that $\mathcal{K}$ is identified with the following kernel $\mathcal{K}_{D}:$
\[ \mathcal{K}_D := \text{ker}
\left[ 
H^0(\mathcal{E}nd(\mathcal{F})(-D) )
\xrightarrow{H^0(\varphi_{x_0}\otimes_{\mathcal{O}_{C}}
\text{id}_{\mathcal{O}_C(-D)})} H^0(\mathcal{E}nd(\mathcal{F}) \otimes_{\mathcal{O}_{C}} \omega_{C/k}) \right].\]
The inclusion $\mathcal{O}_{C}(-D) \hookrightarrow \mathcal{O}_{C}$ induces an inclusion of vector spaces $\mathcal{K}_D \subset \mathcal{G}$, where $\mathcal{G}:= \text{ker}(H^0(\varphi_{x_0}))$ is the subset global of endomorphisms of $\mathcal{F}$ that commute with the Higgs field $\nabla$. Since $k$ is algebraically closed and $(\mathcal{F}, \nabla)$ is polystable, we know that $\mathcal{G}$ consists of a direct sum of ``constant" matrix endomorphisms in $M_{n_i \times n_i}(k)$ of $\mathcal{F}$ that act on each isotypic component of $\mathcal{F}$ consisting of a direct sum of $n_i$ isomorphic stable logarithmic Higgs bundles (cf. the proof of Lemma \ref{lemma:gm gerbe}). Notice that $\mathcal{K}_D \subset \mathcal{G}$ is the subset of endomorphisms in $\mathcal{G}$ that vanish on the divisor $D$. But any nonzero ``constant" matrix in $\mathcal{G} \setminus \{0\}$ is nowhere vanishing. Since $D$ is nonempty, we conclude
that $\mathcal{K}_D =0$, as desired.
\end{proof}

\begin{remark}\label{stsmoospno}
If we have $2g-2+ \text{deg}(D) \geq 2$, then the strictly semistable points of the moduli space of logarithmic Higgs bundles are singular points (the same holds for the moduli space of logarithmic connections). Therefore, under the assumption $2g-2+ \text{deg}(D) \geq 2$, the moduli space is singular in the non-coprime case, even though we know that the stack of semistable objects is smooth by Lemma \ref{prop: smoothness of the stack}.
\end{remark}

\begin{coroll}
Suppose that $n$ and $d$ are coprime. For any point $a \in \mathbb{A}^1_B$, the fiber $(M{\rm{Hodge}}_{C_B}^{ss})_a$ of the moduli space is equidimensional of dimension $n^2(2g-2+ {\rm \text{deg}}(D_a))+1$.
\end{coroll}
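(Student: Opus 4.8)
The plan is to deduce the statement from the smoothness of the structural morphism together with the tangent-space computations of \S\ref{sect:defth}. First, by Corollary~\ref{coroll: moduli space vs stack} and Proposition~\ref{prop: smoothness of the stack}, in the coprime case the morphism $\tau_B\colon M{\rm Hodge}_{C_B}^{ss}\to\mathbb{A}^1_B$ is smooth. Hence, for any $a\in\mathbb{A}^1_B$ and any geometric point $\bar a\to a$, the geometric fiber $(M{\rm Hodge}_{C_B}^{ss})_{\bar a}$ is a finite-type scheme smooth over the algebraically closed field $\kappa(\bar a)$, hence regular. A regular Noetherian scheme is locally irreducible, so every connected component of $(M{\rm Hodge}_{C_B}^{ss})_{\bar a}$ is irreducible, hence equidimensional; and for a scheme smooth over a field the local dimension at a point equals the dimension of the Zariski tangent space there. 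Since dimension is insensitive to the extension $\kappa(a)\subseteq\kappa(\bar a)$ and $\deg(D_{\bar a})=\deg(D_a)$, it therefore suffices to prove that $\dim_{\kappa(\bar a)} T_{(M{\rm Hodge}_{C_B}^{ss})_{\bar a},\,\bar x}=n^2(2g-2+\deg(D_a))+1$ for every closed point $\bar x$.

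To compute this, I would pass to the moduli stack. Lift $\bar x$ to a point $x$ of the stack fiber $(\mathcal{M}{\rm Hodge}_{C_B}^{ss})_{\bar a}$ --- possible because the rigidification morphism $\mathcal{M}{\rm Hodge}_{C_B}^{ss}\to M{\rm Hodge}_{C_B}^{ss}$ of Lemma~\ref{lemma:gm gerbe} is surjective; concretely, $\bar x$ is the class of a stable logarithmic $t_{\bar a}$-connection $(\mathcal{F},\nabla)$. By Lemma~\ref{lemma:gm gerbe} this morphism is a $\mathbb{G}_m$-gerbe, so its relative tangent complex is the Lie algebra of $\mathbb{G}_m$ placed in cohomological degree $-1$; the long exact sequence of tangent-cohomology sheaves then shows that the $\mathbb{G}_m$-gerbe contributes only to infinitesimal automorphisms and that the deformation space of the stack at $x$ --- which in the notation of \S\ref{sect:defth} is $\mathbb{H}^1(C^{\bullet}(x))\cong T_{(\mathcal{M}{\rm Hodge}_{C_B}^{ss})_{\bar a},x}$ --- maps isomorphically onto $T_{(M{\rm Hodge}_{C_B}^{ss})_{\bar a},\bar x}$. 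By Corollary~\ref{coroll: relative tangent space}, using coprimality, the left-hand side has dimension $n^2(2g-2+\deg(D_a))+1+\dim(\mathcal{Q}_x)$; and by Lemma~\ref{lemma: obstruction module higgs bundles} together with the degeneration argument in the proof of Proposition~\ref{prop: smoothness of the stack} (both of which use the nonemptiness of the fibers of $D$) the obstruction module $\mathcal{Q}_x$ vanishes. Combining these gives the required dimension.

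Once the smoothness input ($\mathcal{Q}_x=0$) is granted, everything above is formal, and the only place demanding care is the comparison of tangent spaces across the gerbe $\mathcal{M}{\rm Hodge}_{C_B}^{ss}\to M{\rm Hodge}_{C_B}^{ss}$: one must check that the object $\mathbb{H}^1(C^{\bullet}(x))$ of \S\ref{sect:defth} really is the degree-$0$ (deformation) part of the stack's tangent complex, and that it descends isomorphically to the tangent space of the good moduli space, so that the extra $1=\dim\End(x)$ gets absorbed into the $1$-dimensional stabilizer $\mathbb{G}_m$ rather than into the fiber dimension. An alternative that avoids tracking cohomological degrees is to argue directly with relative dimensions: $\tau_B$ is flat, so the fiber dimension at $\bar x$ equals the relative dimension of $\tau_B$ there; the smooth surjection $\mathcal{M}{\rm Hodge}_{C_B}^{ss}\to M{\rm Hodge}_{C_B}^{ss}$ has relative dimension $-1$, while the smooth stack $\mathcal{M}{\rm Hodge}_{C_B}^{ss}$ has relative dimension $\dim T_{(\mathcal{M}{\rm Hodge}_{C_B}^{ss})_{\bar a},x}-\dim\Aut(x)=\bigl(n^2(2g-2+\deg(D_a))+1\bigr)-1$ over $\mathbb{A}^1_B$ at $x$; adding $1$ back gives $n^2(2g-2+\deg(D_a))+1$ for the fiber of $M{\rm Hodge}_{C_B}^{ss}$. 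I do not expect any genuine obstacle here beyond carefully tracking this single $\pm1$.
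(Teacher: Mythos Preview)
Your proposal is correct and follows essentially the same approach as the paper: reduce to computing tangent-space dimensions via smoothness, lift to the stack, use that the $\mathbb{G}_m$-gerbe induces an isomorphism on tangent spaces, and invoke Corollary~\ref{coroll: relative tangent space} together with the vanishing of $\mathcal{Q}_x$ from Proposition~\ref{prop: smoothness of the stack}. The paper dispatches your $\pm 1$ worry in one line by observing that, \'etale locally, the gerbe has fibers $B\mathbb{G}_m$, whose tangent space (as opposed to tangent complex) is zero, so $T_{(\mathcal{M}{\rm Hodge}_{C_B}^{ss})_a,\widetilde{x}} \xrightarrow{\sim} T_{(M{\rm Hodge}_{C_B}^{ss})_a,x}$ directly; your alternative relative-dimension bookkeeping is also fine but unnecessary.
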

\begin{proof}
In view of the smoothness of $(M{\rm{Hodge}}_{C_B}^{ss})_a$, it suffices to prove that for every closed geometric point $x \in (M{\rm{Hodge}}_{C_B}^{ss})_a$ the dimension of the tangent space $T_{(M{\rm{Hodge}}_{C_B}^{ss})_a, x}$ of $x$ in $(M{\rm{Hodge}}_{C_B}^{ss})_a$ is equal to $n^2(2g-2+ {\rm deg}(D_a))+1$. Choose a lift $\widetilde{x}$ of $x$ in the stack $(\mathcal{M}{\rm{Hodge}}_{C_B}^{ss})_a$. By Lemma \ref{lemma:gm gerbe}, it follows that the morphism $(\mathcal{M}{\rm{Hodge}}_{C_B}^{ss})_a \to (M{\rm{Hodge}}_{C_B}^{ss})_a$ is a $\mathbb{G}_m$-gerbe. In other words, \'etale locally on $(M{\rm{Hodge}}_{C_B}^{ss})_a$  the fibers of $(\mathcal{M}{\rm{Hodge}}_{C_B}^{ss})_a \to (M{\rm{Hodge}}_{C_B}^{ss})_a$ are isomorphic to the classifying stack $B\mathbb{G}_m$. This implies, by the definition of tangent space, that $(\mathcal{M}{\rm{Hodge}}_{C_B}^{ss})_a \to (M{\rm{Hodge}}_{C_B}^{ss})_a$ induces an isomorphism of tangent spaces $T_{(\mathcal{M}{\rm{Hodge}}_{C_B}^{ss})_a, \widetilde{x}} \xrightarrow{\sim} T_{(M{\rm{Hodge}}_{C_B}^{ss})_a, x}$. By Corollary \ref{coroll: relative tangent space}, we have
\[ \text{dim}\left(T_{(M{\rm{Hodge}}_{C_B}^{ss})_a, x}\right) = \text{dim}\left(T_{(\mathcal{M}{\rm{Hodge}}_{C_B}^{ss})_a, \widetilde{x}}\right) = n^2(2g-2+ {\rm \text{deg}}(D_a))+1 + \text{dim}(\mathcal{Q}_{\widetilde{x}}) \]
By the vanishing of the obstruction module $\mathcal{Q}_{\widetilde{x}}$ proven in Proposition \ref{prop: smoothness of the stack}, it follows that $\text{dim}\left(T_{(M{\rm{Hodge}}_{C_B}^{ss})_a, x}\right) = n^2(2g-2+ {\rm \text{deg}}(D_a))+1$, as desired.
\end{proof}
\end{subsection}

\begin{subsection}{Smoothness in the coprime case without poles}\label{smooth:nopoles}
In this subsection, we consider flat connections without poles.
By \cite[Prop. 3.1]{biswas-subramanian-weil-criterion}, a vector bundle on a curve over an algebraically closed field 
admits a flat connection if and only if each of its indecomposable summands
has degree not invertible in the field.

In particular, in characteristic zero, the de Rham moduli stack is empty
unless the degree is zero. In degree zero, the de Rham moduli space of semistable flat connections is singular, and similarly for the Higgs and Hodge moduli spaces.

In positive characteristic $p,$ in the case when the degree $d=d'p$ is a multiple
of $p$  and rank and degree are coprime $(n,d'p)=1$, the smoothness has been
proven  in \cite[Prop. 3.1]{decataldo-zhang-nahpostive} under the assumption that the base $B$ is reduced and Noetherian. The proof  given in loc. cit. is ad hoc and based on earlier related smoothness results.

The methods in the proof of Theorem \ref{subs:pfsmooth} can be modified to prove the smoothness of the Hodge moduli space $M{\rm{Hodge}}^{ss}_{C_B} \to \mathbb{A}^1_{B}$ when: $D$ is the empty divisor, $n$ and $d$ are coprime, and the integer $d$ maps to zero in all residue fields of points of the not necessarily reduced but Noetherian $B.$
Note that when $B$ is connected, these conditions can be met only 
when $B$ has positive characteristic, say, $p,$ so that we are then in the aforementioned case where $(n,d=d'p)=1$ with $B$ Noetherian.

Here, we give a sketch of the proof of the smoothness assertion made above. For this remark we need to keep track of the rank $n,$ and so we use the notation $M{\rm{Hodge}}^{ss}_{n,C_B}$ for the moduli space.

We need to prove that the obstruction $\text{ob}_{x}$ vanishes for any given geometric point $x: Spec(k) \to \cM{\rm{Hodge}}_{n,C_B}$. We can assume that $B = Spec(k)$. If we write $x=(\cF, \nabla)$, then the determinant connection $\text{det}(x):= (\text{det}(\cF), \text{det}(\nabla))$ is an element in $\cM {\rm{Hodge}}_{1,C_k}$. There is a commutative diagram of morphisms induced by the trace $\text{tr}: \cE{nd}(\cF) \to \cO_{C}:$ 
 	\begin{figure}[H]
\centering
\begin{tikzcd}
  H^1(\mathcal{E}nd(\mathcal{F})) \ar[r, "H^1(\varphi_{x})"] \ar[d, "H^1(\text{tr})"] & \; \; \; \; H^1(\mathcal{E}nd(\mathcal{F})\otimes_{\mathcal{O}_{C}} \omega_{C/k}) \ar[d, "H^1(\text{tr}\otimes \text{id}_{\omega_{C/k}})"] \\  H^1(\cO_{C}) \; \; \; \ar[r,"H^1(\varphi_{\text{det}(x)})"]  & \; \; \;H^1( \omega_{C/k}),
\end{tikzcd}
\end{figure}
\noindent which induces a trace map $\mathcal{Q}_{x} \to \mathcal{Q}_{\text{det}(x)}$ on the
cokernels. It can be checked directly from the construction that this maps 
sends $\text{ob}_{x}$ to $\text{ob}_{\text{det}(x)}$. Since $d$ is 
assumed to be divisible by the characteristic of $k,$ it follows that every line 
bundle of degree $d$ admits a $t$-connection. The Hodge stack 
$\cM {\rm{Hodge}}_{1,C_k}$ is isomorphic to a smooth affine bundle 
with fibers $H^0(C, \omega_{k})$ over the Picard stack. In particular 
$\cM {\rm{Hodge}}_{1,C_k}$ is smooth, and so $\text{ob}_{\text{det}(x)} = 0$. 
This shows that $\text{ob}_{x}$ lies in the kernel of the trace 
morphism $\mathcal{Q}_{x} \to \mathcal{Q}_{\text{det}(x)}$, and 
so it lies in the trace zero part  of this module. 
One verifies that this latter is the 
trace zero obstruction module $\mathcal{Q}^0_{x}$ 
formed using trace-zero endomorphisms $\mathcal{E}nd^0(\cF).$
Hence, it suffices to show that the trace zero obstruction module $\mathcal{Q}^0_{x}$ vanishes. 
The same degeneration argument as in the proof of Theorem \ref{thm:smooth} 
above shows that we can take $x$ to be a Higgs bundle. Since the divisor $D$ 
is empty, the obstruction module $\mathcal{Q}_{x}$ is dual to the space 
$\mathcal{K}$ of endomorphism of the Higgs bundle. This consists of the 
constant scalar matrices $k$, because $n$ and $d$ are coprime. The trace-zero 
obstruction module $\mathcal{Q}_{x}^0$ will be dual to the space of trace-zero 
endomorphisms. In other words, the trace-zero module $\mathcal{Q}_{x}^0$ is 
isomorphic to the dual of the kernel of the trace on scalar matrices 
$k \xrightarrow{n \cdot(-)} k$. Since $n$ is coprime to $d=d'p$, it is also 
coprime to $\text{char}(k)$, and hence this kernel is $0.$

\end{subsection}

\begin{subsection}{Proof of the integrality assertion in Theorem \ref{thm:smooth}}
\begin{prop} \label{prop: irreducibility of fibers of Hodge}
All of the fibers of $M{\rm{Hodge}}_{C_B}^{ss} \to \mathbb{A}^1_B$ 
are smooth and geometrically integral.
\end{prop}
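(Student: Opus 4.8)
The plan is to reduce everything to the geometric integrality of the Higgs moduli space. Smoothness of the fibres is already known (the smoothness assertion of Theorem~\ref{thm:smooth}, proved in \S\ref{subs:pfsmooth}), so only geometric integrality remains to be shown. Geometric integrality of a fibre may be checked after base change to an algebraic closure of the residue field, and by Remark~\ref{remark: universal correpresentability} the formation of $M{\rm Hodge}^{ss}_{C_B}$ commutes with arbitrary base change on $B$; hence I may assume $B=\operatorname{Spec}(K)$ with $K$ algebraically closed and need only prove that each fibre of $\tau_K\colon M{\rm Hodge}^{ss}_{C_K}\to\mathbb{A}^1_K$ over a $K$-point is integral. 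Any connected regular scheme is integral and smooth $K$-schemes are regular, so it suffices to prove connectedness of these fibres. By the $\mathbb{G}_{m,K}$-equivariant trivialisation (\ref{eq:trivl}), together with (\ref{eq:higermod}) and (\ref{eq:fiberderh}), the fibre over $0$ is $M{\rm Higgs}^{ss}_{C_K}$ and every fibre over a nonzero $K$-point is isomorphic to $M{\rm de\,Rham}^{ss}_{C_K}$; thus the whole statement reduces to showing that $M{\rm Higgs}^{ss}_{C_K}$ and $M{\rm de\,Rham}^{ss}_{C_K}$ are connected.

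I would first deduce the de Rham case from the Higgs case. Given a point $x=(\mathcal{F},\nabla)$ of $M{\rm Hodge}^{ss}_{C_K}$ lying over a nonzero $c$, its $\mathbb{G}_{m,K}$-orbit is the restriction to $\mathbb{G}_{m,K}$ of the family $(\mathcal{F},s\nabla)$ of $t$-connections over $\mathbb{A}^1_{K,s}$ (with $t$ pulling back to $sc$), whose fibre at $s=0$ is the possibly unstable logarithmic Higgs bundle $(\mathcal{F},0)$. As in the proof of Proposition~\ref{prop: smoothness of the stack}, Langer's semistable reduction \cite[Thm. 5.1]{langer2014semistable} replaces this by a morphism $\mathbb{A}^1_{K,s}\to M{\rm Hodge}^{ss}_{C_K}$ agreeing with the orbit over $\mathbb{G}_{m,K}$ and sending $0$ into the closed subscheme $M{\rm Higgs}^{ss}_{C_K}$. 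Its image is connected and contains both $x$ and a point of $M{\rm Higgs}^{ss}_{C_K}$, so every connected component of $M{\rm Hodge}^{ss}_{C_K}$ meets $M{\rm Higgs}^{ss}_{C_K}$. If $M{\rm Higgs}^{ss}_{C_K}$ is connected, this forces $M{\rm Hodge}^{ss}_{C_K}$ to be connected, hence, being smooth over $K$, irreducible. Then $M{\rm de\,Rham}^{ss}_{C_K}\times_K\mathbb{G}_{m,K}\cong\tau_K^{-1}(\mathbb{G}_{m,K})$ is a nonempty open subscheme of an irreducible scheme, hence irreducible, and projecting onto the first factor shows $M{\rm de\,Rham}^{ss}_{C_K}$ is irreducible. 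Thus everything comes down to: $M{\rm Higgs}^{ss}_{C_K}$ is geometrically integral.

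For the latter, $M{\rm Higgs}^{ss}_{C_K}$ is smooth of pure dimension $n^2(2g-2+\deg D)+1$, so it is enough to exhibit a dense irreducible open subset. When stable bundles of rank $n$ and degree $d$ exist on $C_K$, I would take $U$ to be the open locus of semistable logarithmic Higgs bundles with stable underlying bundle; forgetting the Higgs field realises $U$ as the total space of a vector bundle over the moduli space $N(n,d)$ of stable bundles (a smooth projective integral variety of dimension $n^2(g-1)+1$), of constant rank $n^2(g-1+\deg D)$, because $H^1(\mathcal{E}nd(\mathcal{F})\otimes\omega_{C_K}(D))\cong H^0(\mathcal{E}nd(\mathcal{F})(-D))^{\vee}=0$ for $\mathcal{F}$ stable and $D$ nonempty, by Serre duality. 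Hence $U$ is integral of dimension $n^2(g-1)+1+n^2(g-1+\deg D)=n^2(2g-2+\deg D)+1=\dim M{\rm Higgs}^{ss}_{C_K}$. It then remains to prove that $U$ is dense, equivalently that the complementary locus of semistable logarithmic Higgs bundles with \emph{unstable} underlying bundle has dimension strictly less than $n^2(2g-2+\deg D)+1$; this density estimate is the main obstacle, and I would carry it out by a dimension count over the Harder--Narasimhan/Shatz strata of the stack of vector bundles, in the spirit of \cite[\S6]{nitsure-higgs} and valid in arbitrary characteristic. A variant, uniform in the residue characteristic and also covering the low-genus configurations in which $N(n,d)$ is empty, uses instead the Hitchin morphism $h\colon M{\rm Higgs}^{ss}_{C_K}\to A(C_K)$, which is proper by \cite[Thm. 5.9]{langer-moduli-lie-algebroids} and equidimensional: over the nonempty open locus of characteristic data cutting out a smooth integral spectral curve the fibres of $h$ are torsors under the Jacobians of those curves, hence smooth and geometrically connected, so the preimage of this locus is an irreducible open subset, while properness together with equidimensionality bounds the dimension of the complementary preimage below $n^2(2g-2+\deg D)+1$. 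Either way $M{\rm Higgs}^{ss}_{C_K}$ is irreducible, and, being smooth, integral; in characteristic zero one could alternatively invoke \cite{yokogawa-infinitesimal-higgs}.
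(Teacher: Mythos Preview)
Your overall architecture matches the paper's: reduce to $K$ algebraically closed, use smoothness to reduce to connectedness, show connectedness of $M{\rm Higgs}^{ss}_{C_K}$ first, and then use the $\mathbb{G}_m$-degeneration plus Langer's semistable reduction to pull the rest of $M{\rm Hodge}^{ss}_{C_K}$ into the Higgs locus (the paper argues the total space is connected and hence integral, then reads off irreducibility of the generic fibre and, by the product structure, of all nonzero fibres). For the Higgs step the paper uses only your second ``variant'' via the Hitchin morphism: miracle flatness plus the BNR description of the generic Hitchin fibre as a component of the Picard scheme of the generic spectral curve. Your first route through the moduli of stable underlying bundles $N(n,d)$ is a genuine alternative; it is more elementary in spirit but, as you acknowledge, the density estimate over the Shatz strata is the real content, and you have not supplied it.

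There is a concrete gap in your Hitchin variant. You assert that the open locus of characteristics with smooth integral spectral curve is nonempty and that this makes the argument ``uniform in the residue characteristic and also covering the low-genus configurations.'' But that locus can be empty: when $g=0$, $n>1$ and $\deg(D)=1$ the unique spectral curve is a nonreduced thickening of $\mathbb{P}^1$, and when $g=0$, $n>1$ and $\deg(D)=2$ the generic spectral curve is a disjoint union of $n$ copies of $\mathbb{P}^1$. The paper devotes a separate lemma (Lemma \ref{lemma: generic spectral curve smooth}) to a case analysis showing these are the only exceptions, and then observes that in these configurations there are no stable logarithmic Higgs bundles under the coprimality hypothesis, so $M{\rm Higgs}^{ss}_{C_K}=\varnothing$ and the statement is vacuous. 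You need this step (or an equivalent one); without it your spectral-curve argument does not go through in those cases, and your $N(n,d)$ argument does not either since there are no stable bundles on $\mathbb{P}^1$ of rank $>1$.
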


\begin{context}
In order to show the proposition, we can assume without loss of generality that $B = Spec(k)$ is a field. We shall assume this for the rest of this section.
\end{context} 

We start by proving the proposition for the $0$-fiber 
$M{\rm Higgs}_{C_k}^{ss}$. 
This is the moduli space of logarithmic Higgs bundles with poles at $D$.
We use the Hitchin fibration $M{\rm{Higgs}}_{C_k}^{ss} \to A(C)$, where $A(C)$ 
denotes the Hichin base $A(C) = \bigoplus_{i=1}^n 
H^0((\omega_{C/k}(D))^{\otimes i})$, viewed as an affine space over $k$.

\begin{defn}[Spectral curve]
Let $W = {\rm{Spec}}({\rm{Sym}}(\omega_{C/k}(D)^{\vee}))$  be the total space
of the line bundle $\omega_{C/k}(D)$, with projection 
$\pi_{W}: W \to C$. There is the tautological section 
$x: \mathcal{O}_{W} \to \pi_W^*(\omega_{C/k}(D))$. 
For any morphism $Spec(k) \to A(C)$ corresponding to a tuple 
of sections $\left(\sigma_i 
\in H^0(\omega_{C/k}(D))^{\otimes i})\right)_{i=1}^n$, 
we define the spectral curve $C_{(\sigma_i)} \subset W$ to 
be the vanishing locus of the section:
\[ x^n + \pi_{W}^*(\sigma_1) x^{n-1} + \ldots + \pi_W^*(\sigma_{n-1}) x + \pi_W^*(\sigma_n)  \, \in \, H^0(\pi_W^*(\omega_{C/k}(D))^{\otimes n}).
\]
\end{defn}

\begin{lemma} \label{lemma: generic spectral curve smooth}
Suppose that $k = \overline{k}$. 
\begin{enumerate}[(1)]
    \item The spectral curve assigned to the generic point of $A(C)$ is singular if and only if $g=0$, $n>1$ and $\text{deg}(D)=1$.
    \item The generic spectral curve is reducible if and only if $g=0$, $n>1$ and $\text{deg}(D)=2$.
\end{enumerate}
\end{lemma}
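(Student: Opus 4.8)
The plan is to reduce both statements to a trichotomy for $\deg(L)$, where $L:=\omega_{C/k}(D)$. Since $\deg(D)\ge 1$ we have $\deg(L)=2g-2+\deg(D)\ge 2g-1$, and $\deg(L)\le 0$ happens exactly for $(g,\deg(D))\in\{(0,1),(0,2)\}$. The Hitchin base $A(C)=\bigoplus_{i=1}^{n}H^{0}(L^{\otimes i})$ is an affine space over $k$, hence irreducible, and the spectral curves fit into a flat projective family $\mathcal{C}\to A(C)$ (projective because every spectral curve is disjoint from the section at infinity of the projective completion $\overline{W}$ of $W$). Consequently, ``the fibre is smooth'' is an open condition on $A(C)$, and the number of geometric connected components of the fibres is locally constant over the open locus where $\mathcal{C}\to A(C)$ is smooth. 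Thus, for each of the two claimed equivalences it will be enough either to inspect all fibres (in the two degenerate regimes) or to produce a single smooth integral fibre (in the remaining regime). The case $n=1$ is trivial, as $C_{(\sigma_{1})}$ is then the graph of $-\sigma_{1}$, hence always smooth and integral; so I assume $n\ge 2$ from now on.

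First I would dispose of the two degenerate regimes. If $g=0$ and $\deg(D)=1$, then $H^{0}(L^{\otimes i})=0$ for all $i\ge 1$, so $A(C)$ is a single point and the only spectral curve is $V(x^{n})\subset W$; for $n\ge 2$ this is everywhere non-reduced, hence singular, while its underlying space is the zero section $\cong\mathbb{P}^{1}$, hence irreducible. This is precisely the first exceptional case and lies outside the second. If $g=0$ and $\deg(D)=2$, then $D$ is reduced of degree $2$ on $\mathbb{P}^{1}$, so $L\cong\mathcal{O}_{\mathbb{P}^{1}}$, $W\cong\mathbb{P}^{1}\times\mathbb{A}^{1}$, and every spectral curve is $\mathbb{P}^{1}\times V(x^{n}+a_{1}x^{n-1}+\dots+a_{n})$ for constants $a_{i}\in k$; for generic $(a_{i})$ the polynomial is separable, so the spectral curve is a disjoint union of $n$ copies of $\mathbb{P}^{1}$, which is smooth but reducible. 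This is precisely the second exceptional case and lies outside the first. These two computations also show that for every other pair $(g,\deg(D))$, i.e.\ whenever $\deg(L)\ge 1$, the generic spectral curve is neither forced to be singular nor forced to be reducible; it therefore remains to prove that in this main regime the generic spectral curve is smooth and geometrically integral.

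Assume now $\deg(L)\ge 1$, so that $\deg(L)\ge\max(1,2g-1)$ and hence $h^{1}(L^{\otimes i})=0$ for all $i\ge 1$ and $L^{\otimes i}$ is base-point free for $i\ge 2$. I would exhibit a single smooth integral spectral curve by an Eisenstein construction. Fix $p$ in the support of $D$. Since $L=\omega_{C}(D)$, the inclusion $\omega_{C}(D-p)\hookrightarrow L$ produces a nonzero $\sigma_{1}\in H^{0}(L)$ vanishing at $p$ (one checks $H^{0}(\omega_{C}(D-p))\ne 0$ throughout this regime), and, except in the case $(g,\deg(D),n)=(1,1,2)$, there is a $\sigma_{n}\in H^{0}(L^{\otimes n})$ vanishing at $p$ to order exactly one, because $L^{\otimes n}(-p)$ then has degree $\ge 2g$; set $\sigma_{2}=\dots=\sigma_{n-1}=0$. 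Trivializing $L$ near $p$, the characteristic polynomial $x^{n}+\pi_{W}^{*}\sigma_{1}\,x^{n-1}+\pi_{W}^{*}\sigma_{n}$ is Eisenstein at $p$, hence irreducible over $k(C)$, so the spectral curve $C_{\sigma}$ is irreducible; it is separable, since its derivative $nx^{n-1}+(n-1)\sigma_{1}x^{n-2}$ is nonzero ($p$ cannot divide both $n$ and $n-1$, and $\sigma_{1}\ne 0$), hence reduced, hence integral, hence geometrically integral as $k=\overline{k}$; and it is regular at its unique point over $p$, being there the spectrum of the Eisenstein (hence regular) extension of $\mathcal{O}_{C,p}$. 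Away from $p$, a Bertini argument applied to the base-point-free linear system on $W\times_{C}(C\setminus\{p\})$ spanned by $x^{n}$, by $x^{n-1}\pi_{W}^{*}H^{0}(L(-p))$ and by $\pi_{W}^{*}H^{0}(L^{\otimes n}(-p))$ shows that, for $\sigma_{1}$ and $\sigma_{n}$ general subject to the above, $C_{\sigma}$ is also smooth away from $p$. Thus $C_{\sigma}$ is a smooth integral spectral curve; since smoothness of the fibre is open on $A(C)$ and the number of geometric connected components is locally constant over the connected smooth locus of $\mathcal{C}\to A(C)$, the generic spectral curve is likewise smooth and geometrically integral. For the excluded case $(g,\deg(D),n)=(1,1,2)$ one argues directly: the generic member of the family of double covers $x^{2}+\sigma_{1}x+\sigma_{2}$ of the elliptic curve $C$ is a smooth integral curve (for instance because its discriminant section $\sigma_{1}^{2}-4\sigma_{2}\in H^{0}(L^{\otimes 2})$ has simple zeros for generic $(\sigma_{1},\sigma_{2})$ when $\mathrm{char}(k)\ne 2$, with a similar direct check in characteristic $2$). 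Putting the three regimes together yields the two equivalences.

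The step I expect to be the main obstacle is the smoothness assertion in the main regime in positive characteristic: one must know that the generic member of the relevant base-point-free linear system on (an open subset of) the surface $W$ is genuinely smooth, not merely smooth off a lower-dimensional locus. I would settle this either by checking that the associated rational map to projective space is generically smooth, or, equivalently and more robustly, by bounding the dimension of the incidence variety $\{(w,\sigma): C_{\sigma}\text{ is singular at }w\}\subset W\times A(C)$ by $\dim A(C)-1$ via the observation that passing through a fixed point of $W$ and being singular there impose three independent affine-linear conditions on $\sigma$; the handful of low-degree borderline cases (most notably $g=1$, $\deg(D)=1$) require separate, explicit verification as indicated above.
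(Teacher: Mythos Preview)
Your proposal is essentially correct and follows the same broad outline as the paper: the degenerate cases $(g,\deg D)\in\{(0,1),(0,2)\}$ are handled by inspection, $n=1$ is trivial, and the borderline case $(g,\deg D,n)=(1,1,2)$ is singled out for an explicit Jacobian computation. The genuine difference is in the main regime $\deg L\ge 1$: the paper observes that, apart from the one borderline case, $L^{\otimes n}$ is very ample on $C$ and invokes Bertini directly for both smoothness and irreducibility of a general spectral curve; you instead manufacture irreducibility and local regularity via an Eisenstein polynomial at a point $p\in\operatorname{supp}(D)$ and then run a separate Bertini/incidence-variety argument for smoothness away from $p$. Your Eisenstein step is a clean, concrete alternative that secures integrality without appealing to very ampleness.

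Two points to tighten. First, for your logic to close as written you need the incidence count over the \emph{restricted} parameter space $S$ (with $\sigma_2=\dots=\sigma_{n-1}=0$ and the constraints at $p$), not over all of $A(C)$ as in your final paragraph; the count still goes through on $S$, but note that on the zero section (for $n\ge 3$) the $x$-derivative condition is vacuous, so only two of your three conditions survive there---still sufficient since the zero section is one-dimensional. A cleaner route that avoids this bookkeeping: use the Eisenstein curve only to witness one integral fibre, invoke openness of geometric integrality for flat proper families to transfer integrality to the generic point of $A(C)$, and run the smoothness incidence count over all of $A(C)$ independently. Second, your claim that singularity at a fixed $w$ imposes three independent conditions relies on surjectivity of the jet map $H^0(L^{\otimes i})\to L^{\otimes i}|_{2c}$; this can fail at finitely many $c$ in a few low-degree situations beyond $(1,1,2)$ (for instance $g=1$, $\deg D=1$, $n=3$), which is harmless for the dimension bound but should be acknowledged.
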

\begin{proof}
When $g=0$, $n>1$ and $\text{deg}(D)=1$, the Hitchin base consists of a single point corresponding to the $0$ section. The unique spectral curve is then an $n^{th}$ infinitesimal thickening of $C$, and therefore it is singular and irreducible.

We are left with the following remaining cases.
\begin{enumerate}[(A)]
    \item $n=1$.
    \item $(\omega_{C/k}(D))^{\otimes n}$ is very ample on $C$.
    \item $g=0$, $n>1$, and $\text{deg}(D) =2$.
    \item $g=1$, $n=2$, and $\text{deg}(D)=1$.
\end{enumerate}
Since smoothness is an open condition, it suffices to show that there exists a single spectral curve that is smooth to conclude smoothness. The same holds for (geometric) integrality.

The first case (A) is clear, because then every spectral curve is isomorphic to $C$. On the other hand, (B) follows from an application of Bertini's theorem (\cite[\href{https://stacks.math.columbia.edu/tag/0FD6}{Tag 0FD6}]{stacks-project}+ \cite[\href{https://stacks.math.columbia.edu/tag/0G4F}{Tag 0G4F}]{stacks-project}.

In case (C), we have that $H^0(\omega_{\mathbb{P}^1_k/k}(D))^{\otimes i}) = H^0(\cO_{\mathbb{P}^1_k}) = k$. For generic choice of constants $\sigma_i \in k$, the polynomial $x^n + \sigma_1x^{n-1} + \ldots \sigma_n$ in $k[x]$ splits into distinct linear factors, and then the corresponding spectral curve will be a disjoint union of $n$ copies of $C$. Hence it will be smooth and reducible.

 Let us assume (D) with $\text{char}(k) \neq 2$. For a section $\sigma_2 \in H^0( (\omega_{C/k}(D))^{\otimes 2})= H^0(\cO_{C}(2D))$, we consider the spectral curve $f: C_{\sigma_2} \to C$ defined by $x^n + \pi_W^*(\sigma_2)$ inside $W$. Let $c$ be a closed point of $C$. Choose a uniformizer $t$ for the completion of the local ring $\mathcal{O}_{C,p}$, and a trivialization of the stalk of $\omega_{C/k}(D)$ at $p$. Using these choices, we can write the formal fiber of $f: C_{\sigma_2} \to C$ at $p$ as $Spec(k\bseries{t}[x]/(x^2 + \sigma_2(t))$. By the Jacobi criterion for smoothness, $C_{\sigma_2}$ will be smooth at the points lying over $t$ if the following three polynomials in $k[x]$ don't have a common root
\[ \begin{cases}
 x^2 +\sigma_n(0)\\
 2x \\
 \partial_t(\sigma_n)(0)
\end{cases}\]
Since $2$ is coprime to the characteristic of $k$, the second equation forces $x$ to be $0$. Therefore the points lying over $p$ will be smooth if one of $\sigma_2(0)$ or $\partial_t(\sigma_2)(0)$ does not vanish. This is true as long we choose a section $\sigma_2 \in H^0(\cO_{C}(2D))$ whose vanishing locus consists of two distinct points of $C$, which is always possible. 

Moreover, for this choice of $\sigma_2$ the spectral curve $C_{\sigma_2}$ is integral, even if the characteristic is $2$. Indeed, the spectral curve $C_{\sigma_2} \to C$ is flat over $C$, since it is a relative global complete intersection over $C$ \cite[\href{https://stacks.math.columbia.edu/tag/00SW}{Tag 00SW}]{stacks-project}. Therefore it suffices to check integrality of the generic fiber. So we think of $\sigma_2$ as an element of the ring of functions $k(C)$, and we want to show that $\Spec(k[x]/(x^2 -\sigma_2))$ is integral. This is true because $\sigma_2$ is not a square in $k(C)$ ($\sigma_2$ has simple zeroes by construction). Therefore the generic spectral curve is integral in case (D) regardless of characteristic.

We are left to show the smoothness in case (D) with $\text{char}(k) =2$. Choose a nonzero $\sigma_1 \in H^0(\omega_{C/k}(D)) = H^0(\cO_{C}(D))$, and choose $\sigma_2 \in H^0(\omega_{C/k}(D)^{\otimes 2}) = H^0(\cO_{C}(2D))$ linearly independent to $(\sigma_1)^2$. Notice that $\sigma_1$ has only one zero at $D$, and it is a simple zero. On the other hand $\sigma_2$ does not vanish at $D$, since the linear system spanned by $(\sigma_1)^2$ and $\sigma_2$ is base-point free. Using that the characteristic is $2$, the local Jacobi criterion in this case tells us that the spectral curve is smooth at a point $0$ with uniformizer $t$ whenever the following polynomials in $k[x]$ don't have a common zero:
\[ \begin{cases}
 x^2 + x \sigma_1(0) + \sigma_2(0)\\
 \sigma_1(0) \\
 x\partial_t(\sigma_1)(0) + \partial_t(\sigma_2)(0)
\end{cases}\]
Since $\sigma_1$ only vanishes at $D$, the second equation forces the point $0$ to be $D \in C(k)$. The morphism $C \to \mathbb{P}^1_k$ corresponding to the linear series spanned by $(\sigma_1)^2$ and $\sigma_2$ is ramified at $D$, and therefore we have $\partial_t(\sigma_2)(0) =0$. Since $D$ is a simple zero of $\sigma_1$, we have $\partial_t(\sigma_1)(0) \neq 0$, and so the vanishing of the third equation would imply $x=0$. Going back to the first equation, we see that the vanishing of all three equations forces $0$ to be the point $D$ and $\sigma_2(0)=0$, which is not true since $\sigma_2$ does not vanish at $D$.
\end{proof}

\begin{lemma}
$M{\rm{Higgs}}_{C_k}^{ss}$ is smooth and geometrically connected. It is empty when $n>1$, $g=0$ and $\text{deg}(D)\leq 2$.
\end{lemma}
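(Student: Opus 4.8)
\emph{Smoothness and emptiness.} Via the isomorphism \eqref{eq:higermod} over the base $\Spec(k)$, the space $M{\rm Higgs}_{C_k}^{ss}$ is the fiber over $0\in\mathbb{A}^1_k$ of the morphism $\tau_k\colon M{\rm Hodge}_{C_k}^{ss}\to\mathbb{A}^1_k$, which is smooth by Proposition \ref{prop: smoothness of the stack} and Corollary \ref{coroll: moduli space vs stack}; so $M{\rm Higgs}_{C_k}^{ss}$ is smooth over $k$, and equidimensional of dimension $N':=n^2(2g-2+\deg D)+1$ by the dimension computation above. For the emptiness claim, suppose $g=0$, $n>1$, $\deg D\le 2$, so that $C\cong\P^1_k$ and $\deg\!\bigl(\omega_{C/k}(D)\bigr)=2g-2+\deg D\le 0$. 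If $(\cF,\nabla)$ were a semistable logarithmic Higgs bundle of rank $n$ and degree $d$, write $\cF=\bigoplus_j\cO_{\P^1}(a_j)$ with $a_1\ge\cdots\ge a_n$ and let $\cF_{\max}\subseteq\cF$ be the sum of the summands of degree $a_1$. Since $\deg D\le 2$, every summand of $(\cF/\cF_{\max})\otimes\omega_{C/k}(D)$ has degree $\le(a_1-1)+(\deg D-2)<a_1$, so $\Hom\!\bigl(\cF_{\max},(\cF/\cF_{\max})\otimes\omega_{C/k}(D)\bigr)=0$ and hence $\nabla$ preserves $\cF_{\max}$; but $\mu(\cF_{\max})=a_1\ge\mu(\cF)$, with equality only if $n\mid d$, contradicting $\gcd(n,d)=1$ with $n>1$. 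Thus $\cF_{\max}$ destabilizes $(\cF,\nabla)$ and $M{\rm Higgs}_{C_k}^{ss}=\varnothing$.

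\emph{Geometric connectedness, setup.} We may assume $k=\bar k$, and we treat the remaining cases, in which ($g=0$, $\deg D\le 2$, $n>1$) is excluded. The plan is to use the Hitchin morphism $h\colon M{\rm Higgs}_{C_k}^{ss}\to A(C)$. It is proper by \cite[Thm.~5.9]{langer-moduli-lie-algebroids}, and $A(C)\cong\mathbb{A}^N_k$ is integral; the Hitchin section (or, more concretely, pushing forward line bundles from a spectral curve) shows $M{\rm Higgs}_{C_k}^{ss}\neq\varnothing$, so $h$ is dominant. By Lemma \ref{lemma: generic spectral curve smooth}, in these cases the spectral curve $\widetilde{C}_\eta$ over the generic point $\eta$ of $A(C)$ is a smooth geometrically integral projective curve over $k(\eta)$; by the spectral (Beauville--Narasimhan--Ramanan) correspondence a Higgs bundle with smooth integral spectral curve is stable, and the appropriate twists of line bundles on $\widetilde{C}_\eta$ exhaust $h^{-1}(\eta)$, so $h^{-1}(\eta)$ is a torsor under $\on{Jac}(\widetilde{C}_\eta)$, in particular geometrically integral. (The case $n=1$ is handled directly: $M{\rm Higgs}_{C_k}^{ss}\cong\on{Pic}^d(C)\times H^0(C,\omega_{C/k}(D))$ is smooth and geometrically integral.)

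\emph{Geometric connectedness, conclusion.} The Hitchin morphism $h$ is moreover flat: its source is Cohen--Macaulay (being smooth), its target $A(C)$ is regular, and the Hitchin morphism is equidimensional (the nilpotent cone $h^{-1}(0)$ is Lagrangian, of dimension $N'-\dim A(C)$), so flatness follows from the ``miracle flatness'' criterion. Since $M{\rm Higgs}_{C_k}^{ss}$ is reduced, the generic points of its irreducible components are among its associated points; as $h$ is flat with integral target, these associated points lie over $\eta$, i.e.\ every irreducible component of $M{\rm Higgs}_{C_k}^{ss}$ dominates $A(C)$. The components dominating $A(C)$ correspond to the irreducible components of the generic fiber $h^{-1}(\eta)$, which is geometrically irreducible; hence $M{\rm Higgs}_{C_k}^{ss}$ is irreducible, therefore integral, and being smooth over the algebraically closed field $k$ it is geometrically integral.

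\emph{Main obstacle.} The two substantial ingredients are (i) the identification of the generic Hitchin fiber with a Jacobian torsor via the spectral correspondence, which rests on Lemma \ref{lemma: generic spectral curve smooth} to guarantee that the generic spectral curve is smooth and integral, and (ii) the flatness, equivalently the equidimensionality, of the Hitchin morphism, which is precisely what forbids $M{\rm Higgs}_{C_k}^{ss}$ from acquiring further irreducible components supported over the discriminant locus of non-integral spectral curves. The remaining points — the Harder--Narasimhan computation on $\P^1$ for emptiness, the reduction to $k=\bar k$, and the flat/associated-points bookkeeping — are routine.
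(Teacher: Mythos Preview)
Your proof is correct and follows essentially the same route as the paper: miracle flatness for the Hitchin morphism (Cohen--Macaulay source, regular target, equidimensional fibers) together with irreducibility of the generic Hitchin fiber via the spectral/BNR correspondence; your emptiness argument on $\P^1$ is more explicit than the paper's one-line assertion. One imprecision worth flagging: you justify equidimensionality by calling the nilpotent cone ``Lagrangian,'' but for $\deg D>0$ the logarithmic Higgs moduli space is only Poisson, not symplectic, and the fiber dimension is not half of $\dim M$; the paper instead cites \cite[Cor.~8.2]{chaudouard-laumon} for the equidimensionality of all Hitchin fibers, which is what your argument actually needs.
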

\begin{proof}
We have already shown smoothness in Theorem \ref{thm:smooth}, we just need to prove geometric connectedness. For this we can replace $k = \overline{k}$. We start by dealing with the cases when the generic spectral curve is smooth and irreducible, as characterized in Lemma \ref{lemma: generic spectral curve smooth}. Note that $M{\rm{Higgs}}_{C_k}^{ss} \to A(C)$ is flat by miracle flatness \cite[\href{https://stacks.math.columbia.edu/tag/00R4}{Tag 00R4}]{stacks-project}, because it is a morphism between integral $k$-smooth schemes (Theorem \ref{thm:smooth}) with equidimensional fibers of the same dimension \cite[Cor. 8.2]{chaudouard-laumon}. By flatness, it suffices to show that the generic fiber of the Hitchin fibration is irreducible. The generic fiber of the Hitchin morphism will be a connected component of fixed degree of the Picard scheme associated to the smooth and irreducible generic spectral curve \cite[Prop. 3.6]{bnr-spectral-curves}, \cite[\S5]{schaub-courbes-spectrales}. Therefore it is connected, as desired.

We are left with the case when $n>1$, $g=0$, and $\text{deg}(D)\leq 2$. Then there are no stable Higgs bundles under our coprime assumption $(n,d)=1$. Therefore the Higgs moduli space is empty in that case, and hence vacuously irreducible.
\end{proof}

Now we are ready for the proof of the more general proposition.
\begin{proof}[Proof of Proposition \ref{prop: irreducibility of fibers of Hodge}]
We have already seen in Theorem \ref{thm:smooth} that all fibers are smooth; we only need to prove that they are geometrically connected. Without loss of generality, we replace $B$ with $Spec(k)$ for $k =  \overline{k}$. We already know that the $0$-fiber $M{\rm{Higgs}}_{C_k}^{ss}$ is geometrically connected. Using this and the ``semistable reduction" theorem in \cite[Thm. 5.1]{langer2014semistable}), we see that the total space $M{\rm{Hodge}}_{C_k}^{ss}$ is connected. Since $M{\rm{Hodge}}_{C_k}^{ss}$ is smooth, this means that $M{\rm{Hodge}}_{C_k}^{ss}$ is integral. Therefore the generic fiber of $M{\rm{Hodge}}_{C_k}^{ss} \to \mathbb{A}^1_k$ is integral. Since $M{\rm{Hodge}}_{C_k}^{ss} \to \mathbb{A}^1_k$ is a constant family away from $0$, and $k = \overline{k}$, this means that all the fibers away from $0$ are also geometrically irreducible.
\end{proof}

\end{subsection}

\end{section}

\begin{section}{Proof of the cohomological Theorems \ref{thm:spk}, \ref{thm:spv}}\label{section:pfcohtms}

Theorems \ref{thm:spk}, \ref{thm:spv}  are concerned with specialization morphisms in the context of
moduli spaces of $t$-connections with poles and coprime rank and degree.

If the morphism to a DVR is not proper, as it is the case for  the moduli spaces above,
then the desirable specialization morphisms may fail to be defined. 
The paper \cite{decataldo-cambridge} studies this problem
and provides criteria for the existence of specialization morphisms. These criteria are often based
on the existence of a suitable completion of the morphism to the DVR, where one leverages
the existence of the specialization morphism after the completion to deduce the existence
before the completion.

In this section: we  recall some of the techniques, rooted in \cite{decataldo-cambridge} and \cite{decataldo-zhang-completion}, and employed in 
\cite{decataldo-zhang-nahpostive} to study specialization morphisms for moduli spaces of $t$-connections without poles
under  suitable coprimality assumptions; we  recall the constructions of the completions of moduli spaces
used in this study; we observe that these techniques apply to the case of poles;   we finally  prove Theorems \ref{thm:spk}, \ref{thm:spv}. 

\begin{subsection}{Completion of Hodge, Higgs and de Rham moduli spaces}\label{subs:1100}
In the remainder of this paper, 
we need  suitable completions of the structural morphisms
 $v_{{\rm{Hodge}},B}$ (\ref{notn:vHodgeB}),
$v_{{\rm{Higgs}},B}$(\ref{notn:vhiggsB}),
$v_{{\rm de \, Rham},B}$(\ref{notn:vdrB})  to the Noetherian base $B$
and of the structural morphism 
$\tau_{B}$ (\ref{notn:tau_B}) to $\mathbb{A}^1_B.$

\cite[\S2,4]{decataldo-zhang-completion} develops a general compactification technique and applies it to Hodge, Higgs and de Rham moduli spaces
without poles. This can also be applied
to the moduli spaces of $t$-connections with poles appearing in this paper, as soon as we have 
the properness of the Hodge-Hitchin morphism,
which we do by \cite[Thm. 5.2]{langer-moduli-lie-algebroids}. These techniques give us all the desired completions, except for the morphism 
$v_{{\rm{Hodge}},B}$ (\ref{notn:vHodgeB}).
We complete the morphism $v_{{\rm{Hodge}},B}$ (\ref{notn:vHodgeB}) by
means of a simple additional construction, akin to the completion 
of  $\mathbb{A}^1_B$ given by $\mathbb{P}^1_B.$

Let us summarize the construction of all these completions,
and list the properties relevant to the proof of Theorems
\ref{thm:spk} and \ref{thm:spv}.

\begin{context}
For the compactification results in this section, we do not assume that the rank and degree are coprime, or that the fibers of the divisor 
$D$ of poles are nonempty.
\end{context}

\begin{notn}
In what follows,
we omit many decorations, and the moduli spaces in question may be with or without poles.
\end{notn} 

We have the following commutative diagram with Cartesian square of $\mathbb{G}_m$-equivariant morphisms
(see \cite[(48)]{decataldo-zhang-completion}):
\begin{equation}\label{act}
\xymatrix{
M:= M{\rm{Hodge}} \times_{\mathbb A^1} \mathbb A^2 \ar[r] \ar[d] 
\ar@/_2pc/[dd]_-{\tau'} & M{\rm{Hodge}} \ar[d]^-\tau  
& 
&
\\
\mathbb A^2_{x,y} \ar[r] \ar[d] & \mathbb A^1_t, 
& 
(x,y) \ar@{|->}[r] \ar@{|->}[d] 
& 
t=xy
\\
\mathbb A^1_x 
&  
&
x, 
&
}
\end{equation}
where the $\mathbb{G}_{m}$ action on $\mathbb A^2_{x,y}$ is defined by setting $\lambda(x,y):= (x,\lambda y)$, the 
$\mathbb{G}_m$-action  on $\mathbb A^1_\lambda$ is the usual dilation
$\lambda\cdot t =  \lambda t,$ and the $\mathbb{G}_m$  action on $\mathbb A^1_x$ is trivial.

The completions of $v_{{\rm{Hodge}},B}$ (\ref{notn:vHodgeB}),
$v_{{\rm{Higgs}},B}$(\ref{notn:vhiggsB})
and of the structural morphism 
$\tau_{B}$ (\ref{notn:tau_B}) to $\mathbb{A}^1_B$
are obtained as follows.
 We refer to \cite[Theorem 2.13 and (48) (resp. Theorem 2.14  and (49), if we wish to incorporate the Hitchin-type morphisms)]{decataldo-zhang-completion} for more details. Note that these theorems follow from the generalization \cite[Theorem 2.7]{decataldo-zhang-completion} of
a well-known  compactification technique of Simpson's, generalized in \cite[Theorem 2.7]{decataldo-zhang-completion}.

Recall that the nilpotent cone $N{\rm Higgs}$ is the fiber of the 
proper Hitchin morphism $h: M{\rm{Higgs}} \to A$ over the origin $o_A$ of the Hitchin base $A.$

\begin{defn}
We define $M^* \subset M$ as the open complement of the union of all nilpotent cones in the preimage $M_{x=0}$ of the $x$-axis inside $\mathbb{A}^2_{x,y}$.
\end{defn}

\begin{defn}
We define the following $\mathbb{A}^1_x$-schemes obtained by taking quotients by the $\mathbb{G}_m$-action:
\begin{itemize}
    \item $\overline{M{\rm{Hodge}}} := (M^*)/\mathbb{G}_m$ (proper over $\mathbb{A}^1_x,$ but not over $B$);
    \item $\overline{M{\rm{Higgs}}} := ((M^*)_{x=0})/\mathbb{G}_m$ (proper over $B$);
    \item $\overline{M\rm{de\,Rham}} := ((M^*)_{x=1})/\mathbb{G}_m$ (proper over $B$);
    \item $\partial\overline{M{\rm{Higgs}}} := ((M^*)_{x=0,y=0})/\mathbb{G}_m
``=" ((M^*)_{x=1,y=0})/\mathbb{G}_m =\partial \overline{M{\rm de \, Rham}};$
\end{itemize}
\end{defn}

Note that: $\partial \overline{M{\rm de \, Rham}} = \partial \overline{M{\rm{Higgs}}}  =
(M{\rm Higgs} \setminus N{\rm Higgs})/\mathbb{G}_m$
(proper over $B$).

The resulting proper morphism $\overline{\tau}: \overline{M{\rm{Hodge}}} \to \mathbb{A}^1_x$
is $\mathbb{G}_m$-equivariant for the natural $\mathbb{G}_m$-action on $\mathbb{A}^1_x$ given by 
$t\cdot x = tx$. 
After restriction over $\mathbb{G}_m \subseteq \mathbb{A}^1_x,$ 
we have $\mathbb{G}_m$-equivariant isomorphisms:
 \begin{equation}\label{a12p1}
 (\partial \overline{M{\rm{Hodge}}}, \overline{M{\rm{Hodge}}}, 
{M{\rm{Hodge}}})_{\mathbb{G}_m}
\simeq 
(\partial \overline{M{\rm de \, Rham}}, \overline{M{\rm de \, Rham}}, 
{M{\rm de \, Rham}}) \times \mathbb{G}_m.
\end{equation}

In particular, we have  natural isomorphisms:
$\partial \overline{M{\rm{Hodge}}} = (M^*)_{y=0}/\mathbb{G}_m \simeq 
\partial \overline{M{\rm{Higgs}}} \times \mathbb{A}^1 =
\partial \overline{M{\rm de \, Rham}} \times \mathbb{A}^1$
 (proper over $\mathbb{A}^1_B,$ but not over $B$).

\begin{notn} If in place of (\ref{act}) (i.e. \cite[(48)]{decataldo-zhang-completion}), we consider
its version \cite[(49)]{decataldo-zhang-completion}, augmented by the Hodge-Hitchin morphism, we obtain 
that the completions above factor through suitable completions of
the Hodge-Hitchin, Hitchin and de Rham-Hitchin morphisms, with suitably completed targets. We thus have the following three completions of morphisms  (cf. \cite[Theorems
2.19, 2.18 and 2.14, respectively]{decataldo-zhang-completion}):
\begin{itemize}
    \item A completion of the morphism $v_{{\rm{Higgs}},B}: M{\rm{Higgs}} \stackrel{h_{{\rm{Higgs}}}} \to A(C_B) \to  B$
to a morphism:
\begin{equation}\label{cohi}
\xymatrix{
\overline{v_{{\rm{Higgs}}}}: \overline{M{\rm{Higgs}}} 
\ar[rr]^-{{\overline{h_{{\rm{Higgs}}}}}}
&
&
 (\overline{A(C_B)})  \to  B.
}
\end{equation}
    \item (If $B$ has positive equicharacteristic) A completion of the morphism $v_{{\rm de \, Rham},B}: M{\rm de \, Rham} 
\stackrel{h_{{\rm de \, Rham}}} \to A(C_B)  \to  B$
to a morphism:
\begin{equation}\label{code}
\xymatrix{
\overline{v_{{\rm de \, Rham}}}: \overline{M{\rm de \, Rham}} 
\ar[rr]^-{{\overline{h_{{\rm de \, Rham}}}}}
&
&
 \overline{A(C_B)}  
\to  B.
}
\end{equation}
    \item (If $B$ has positive equicharacteristic) A completion of the morphism $\tau_{B}: M{\rm{Hodge}} 
\stackrel{h_{{\rm{Hodge}}}}\to A(C^{(B)}_B ) \times_B \mathbb{A}^1_B \to  \mathbb{A}^1_B$
to a morphism: 
\begin{equation}\label{cotau}
\xymatrix{
\overline{\tau_{B}}: \overline{M{\rm{Hodge}}} 
\ar[rr]^-{\overline{h_{{\rm{Hodge}}}}}
&
&
 \overline{A(C^{(B)}_B)}\times_B \mathbb{A}^1_B \to  \mathbb{A}^1_B.
}
\end{equation}
\end{itemize}
\end{notn}

The compositum morphism 
$\overline{v_{\rm Hodge}}: \overline{M{\rm{Hodge}}} \to  \mathbb{A}^1_B \to B$
is not proper as soon as the intermediate morphism to $\mathbb{A}^1_B$ 
is surjective (e.g.: in the case of coprime rank and degree
and nonempty divisor of poles $D$; or in the case of 
degree zero and empty $D$)  and therefore  does not yield the desired completion.

Next, we construct such a completion.

\begin{defn}
Let $\widetilde{\overline{M{\rm{Hodge}}}}$ denote the scheme over $\mathbb{P}^1_B$ obtained by gluing $\overline{M{\rm{Hodge}}}$ to $\overline{M{\rm de \, Rham}}\times \mathbb{A}^1_B$
along their open subsets over $\mathbb{G}_m$ by using the isomorphism (\ref{a12p1}) and using the same prescription
that yields $\mathbb{P}^1$ from two copies of $\mathbb{A}^1.$
In particular, we obtain 
a proper morphism
${\widetilde{\overline{\tau}}}:\widetilde{\overline{M{\rm{Hodge}}}} \to \mathbb{P}^1_B$.
\end{defn}

If $B$ is of positive equicharacteristic, then
we get a canonical factorization: 
\begin{equation}\label{eq:lok}
\xymatrix{
\widetilde{\overline{v_{{\rm{Hodge}}}}}:
\widetilde{\overline{M{\rm{Hodge}}}} 
\ar[rr]^-{\widetilde{\overline{h_{{\rm{Hodge}},B}}}}
\ar@/^2pc/[rrrr]^-{\widetilde{\overline{\tau}}}
&&
\overline{A(C^{(B)}_B}) \times_B \mathbb{P}^1_B
 \ar[rr]^-{\text{proj}}
 &&
 \mathbb{P}^1_B
 \ar[r]^-{\text{proj}}
 & B.
}
\end{equation}

The boundary $\partial \widetilde{\overline{M{\rm{Hodge}}}}= \partial' \cup \partial''$, complement of
$M{\rm{Hodge}},$ is made of two relative to $B$ hypersurfaces where $\partial''$ is the preimage of $\infty_B$
via the morphism to $\mathbb{P}^1_B$ and $\partial'$
is the closure of $\partial \overline{M{\rm{Hodge}}}.$

We have two charts $\overline{M{\rm{Hodge}}} = M^*/\mathbb{G}_m$ and  $\overline{M{\rm de \, Rham}}\times \mathbb{A}^1 = ((M^*)_{x=1}/ \mathbb{G}_m)  \times \mathbb{A}^1$, and in each of  these
two charts, before taking the quotient by $\mathbb{G}_m$,
the hypersurfaces $\partial'$ and $\partial''$ are given by relative Cartier divisors.

The key observation here is that, when the Hodge moduli space is smooth, e.g. in our case 
with poles and coprime rank and degree (cf. Theorem \ref{thm:smooth}), 
these Cartier divisors form a simple normal crossing divisor over $B$.
\end{subsection}
\begin{subsection}{Vanishing of vanishing cycles}
We return to our assumption that we are working with poles and that rank and degree are coprime. For this section we assume that $B$ is a DVR.
We denote by $\phi$ the vanishing cycle functors associated with morphisms to $B.$ We denote by $\overline{\mathbb{Q}}_{\ell, X}$ the 
$\overline{\mathbb{Q}}_{\ell}$-adic constant sheaf of rank one on a scheme $X$; we drop the space decoration if it is clear in the context.

Let us start by proving the following complement to SGA.

\begin{lemma}\label{cplmt sga}
Let $X$ be a Noetherian regular
scheme.
Let $v: X\to B$
be a smooth morphism from a Noetherian regular scheme to a DVR. Let $a: D\to X$ be a closed embedding with $D$ a divisor in simple normal crossings relative to $v;$ in particular, the irreducible components of $D$ are smooth over $B.$  
Assume that the prime $\ell$ is invertible in $X$ and in $B$.
Let $b: X^o:=X\setminus D \to X$ be the open immersion.
Then we have 
\begin{equation}\label{eq cp sga}
\phi b_*b^* (\overline{\mathbb{Q}}_\ell)_X =0.
\end{equation}
\end{lemma}

\begin{proof}
We offer two proofs. 
The first one consist of applying  Beilinson's theorem, to the effect that the vanishing cycle functor $\phi$ commutes with the Verdier duality functor
$\mathbb D$ up to a Tate shift (cf. \cite[Cor. 0.2]{qing-zheng-duality}), i.e. 
there is an isomorphism of functors
$\mathbb D_{X_s} \phi \simeq \phi \mathbb D_X (-1)$.
In fact, since $X$ is smooth over $B,$ the constant sheaf on $X$ is self-dual up to shifts,
so that
the conclusion follows from
the fact that one knows that
 $\phi b_! b^! (\overline{\mathbb{Q}}_\ell)_X =0$
 (cf.  \cite[XIII, LM. 2.1.11, p.105]{SGA7-2}) 
 by an application of Beilinson's theorem together with the standard $\mathbb D b_!b^!=
 b_*b^* \mathbb D.$

For the second one, we argue as follows.
Let $D=\bigcup_{i\in I} D_i$
be the decomposition into irreducible components. For $J\subseteq I$, let $D_J:= \bigcap_{i\in J} D_i.$
By considering the distinguished triangle of functors $(a_!a^!, \rm{id}, b_*b^*)$, since the smoothness of $X/B$ implies that $\phi \overline{\mathbb{Q}}_\ell =0,$
it is enough to show that
$\phi a^! \overline{\mathbb{Q}}_\ell =0.$
 By the Absolute Purity Conjecture 
 proved by Gabber \cite{fujiwara_absolute_purity}, we have that
 if $a_E: E \to X$ is the closed immersion of a pure  codimension $c$ regular subscheme in $X,$ then $a_E^!(\overline{\mathbb{Q}}_\ell)_X= (\overline{\mathbb{Q}}_\ell)_E [-2c](-c),
 $  so that $\phi a_E^! (\overline{\mathbb{Q}}_\ell)_X =0.$ We thus have $\phi a^!_{D_J} (\overline{\mathbb{Q}}_\ell)_{D_J}=0,$ for every $J \subset I.$
 The conclusion follows by a simple devissage argument involving the application of the functors $\phi a^!$ applied to the graded pieces of the stupid filtration on the acyclic complex providing a resolution of the constant sheaf on $D$:
 \[
0 \to (\overline{\mathbb{Q}}_\ell)_D
\to \oplus_{|J|=1}
(\overline{\mathbb{Q}}_\ell)_{D_J}
\to
\oplus_{|J|=2}
(\overline{\mathbb{Q}}_\ell)_{D_J}
\ldots
\oplus_{|J|=|I|}
(\overline{\mathbb{Q}}_\ell)_{D_J}
\to 0.
 \]
\end{proof}

We have the closed and open immersions:
\begin{equation}\label{eq:opcl}
\xymatrix{
 \partial \widetilde{\overline{M{\rm{Hodge}}}} \ar[r]^-a 
 &
\widetilde{\overline{M{\rm{Hodge}}}}
 &
 \overline{M{\rm{Hodge}}} \ar[l]_-b.
} 
\end{equation}

In this paragraph, we work on the two charts before taking the quotient by $\mathbb G_m.$
We have morphisms as in 
(\ref{eq:opcl}). By the smoothness of $M^* \to B$ (cf. Theorem \ref{thm:smooth}), we  know that $\phi \overline{\mathbb{Q}}_\ell =0.$
The boundary is a simple normal crossing divisor on $M^*$ relative to $B$.
By Lemma \ref{cplmt sga},
we obtain the
identity
$\phi b_*b^* \overline{\mathbb{Q}}_\ell=0.$ By the exactness of $\phi$ applied to the distinguished triangle of functors $(a_!a^!, {\rm id}, b_*b^*),$ we see that  $\phi a_!a^! \overline{\mathbb{Q}}_\ell = 0.$ These vanishing of vanishing cycle complexes occur on the two charts before taking the quotient by $\mathbb G_m$.
The purpose of the following lemma is to descend these three identities to the quotient by $\mathbb{G}_m$, i.e. to the two  charts of $\widetilde{\overline{M{\rm{Hodge}}}}$.

\begin{lemma}\label{lm:phib}
Let rank and degree be coprime and let us assume we are in the situation with poles.
We have $\phi  \overline{\mathbb{Q}}_\ell=\phi a_!a^! \overline{\mathbb{Q}}_\ell =\phi b_*b^* \overline{\mathbb{Q}}_\ell=0$ on $\widetilde{\overline{M{\rm{Hodge}}}}.$
\end{lemma}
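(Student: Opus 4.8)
The plan is to deduce the three vanishing statements on the quotient $\widetilde{\overline{M{\rm Hodge}}}$ from the corresponding statements upstairs on $M^*$ (and its chart $(M^*)_{x=1}\times\mathbb{A}^1$), which are already established in the paragraph preceding the lemma using the smoothness of $M^*\to B$ (Theorem \ref{thm:smooth}) together with \cite[XIII, Lemme 2.1.11]{SGA7-2} for the simple normal crossing boundary. The key point is that $\widetilde{\overline{M{\rm Hodge}}}$ is glued from the two charts $\overline{M{\rm Hodge}}=M^*/\mathbb{G}_m$ and $\overline{M{\rm de\,Rham}}\times\mathbb{A}^1=((M^*)_{x=1}/\mathbb{G}_m)\times\mathbb{A}^1$, so that vanishing of a complex on $\widetilde{\overline{M{\rm Hodge}}}$ can be checked after restriction to each chart; and on each chart we are looking at a $\mathbb{G}_m$-quotient of a smooth scheme with simple normal crossing boundary.

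\textbf{Main steps.} First I would reduce to a single chart, say $\overline{M{\rm Hodge}}=M^*/\mathbb{G}_m$, the other being identical; here I should be slightly careful that the structural morphism $\widetilde{\overline{v_{{\rm Hodge}}}}$ and its nearby/vanishing cycle functor $\phi$ restrict compatibly to the chart, which they do since $\widetilde{\overline{v_{{\rm Hodge}}}}$ is built by composing the chart morphisms with projections to $\mathbb{P}^1_B$ and then to $B$. Second, let $q\colon M^*\to M^*/\mathbb{G}_m=\overline{M{\rm Hodge}}$ be the quotient map, which is a $\mathbb{G}_m$-torsor, hence smooth, surjective, and in particular a universal homeomorphism is \emph{not} available — instead I use that $q$ is smooth of relative dimension $1$, so $q^*$ is (up to a shift and Tate twist) fully faithful on constructible complexes, and crucially $q^*$ commutes with the vanishing cycle functor: the vanishing cycles of $v\colon Y\to B$ pull back to the vanishing cycles of $v\circ q$ along a smooth morphism $q$ (this is the standard compatibility of $\psi,\phi$ with smooth base change, \cite[XIII]{SGA7-2}). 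Therefore $q^*(\phi_{\widetilde{\overline{v_{{\rm Hodge}}}}}\,\overline{\mathbb{Q}}_\ell)=\phi_{\widetilde{\overline{v_{{\rm Hodge}}}}\circ q}\,\overline{\mathbb{Q}}_\ell$, and the right-hand side vanishes by the upstairs computation on $M^*$. Since $q^*$ is conservative (it is faithfully flat), $\phi_{\widetilde{\overline{v_{{\rm Hodge}}}}}\,\overline{\mathbb{Q}}_\ell=0$ on the chart $\overline{M{\rm Hodge}}$. Third, for the two relative statements $\phi b_*b^*\overline{\mathbb{Q}}_\ell=0$ and $\phi a_!a^!\overline{\mathbb{Q}}_\ell=0$: writing $\widetilde a,\widetilde b$ for the open/closed immersions of the boundary and interior \emph{before} the quotient, we have $q^*b_* = \widetilde b_*\, q^*$ and $q^*a_!=\widetilde a_!\,q^*$ (again by smooth base change along $q$, since forming $b_*$ and $a_!$ along the Cartesian squares defining the stratification commutes with the smooth pullback $q$), and $q^*b^*=\widetilde b^* q^*$, $q^*a^!=\widetilde a^! q^*$ up to the shift coming from smoothness of $q$ which is harmless for a vanishing statement. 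Hence $q^*(\phi\, b_*b^*\overline{\mathbb{Q}}_\ell)=\phi_{\mathrm{up}}\,\widetilde b_*\widetilde b^*\overline{\mathbb{Q}}_\ell=0$ and similarly for $a_!a^!$, and conservativity of $q^*$ concludes. Finally I glue the two charts: the complex $\phi\,(\text{--})$ on $\widetilde{\overline{M{\rm Hodge}}}$ vanishes on each of the two open charts covering it, hence vanishes.

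\textbf{Main obstacle.} The substantive point — and the one I would write out with care — is the compatibility of vanishing cycles with the smooth quotient morphism $q$ and with the functors $b_*,b^*,a_!,a^!$ attached to the boundary stratification, i.e. making precise that ``$q^*\phi=\phi q^*$'' and that $q^*$ intertwines the boundary push/pull functors with their upstairs analogues. This is standard (smooth base change for nearby and vanishing cycles, \cite[XIII]{SGA7-2}; note $q$ is smooth so no issues of the type that arise for general base change), but one must be careful about the shift/twist by $(1)[2]$ introduced by $q^*$ for a $\mathbb{G}_m$-torsor and confirm it does not affect a vanishing assertion — which it obviously does not. Everything else is formal: reduction to charts, and conservativity and (shifted) full faithfulness of $q^*$ for the faithfully flat smooth morphism $q$.
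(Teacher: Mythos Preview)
Your overall strategy---descend the three vanishing statements from $M^*$ to the quotient by pulling back along the quotient morphism and using conservativity---is the same as the paper's, but there is a genuine gap: you assert that $q\colon M^*\to M^*/\mathbb{G}_m$ is a $\mathbb{G}_m$-torsor, hence smooth. This is not the case. The $\mathbb{G}_m$-action on $M^*$ has nontrivial finite stabilizers (for instance, over the locus $y=0$ the action reduces to the scaling action on Higgs bundles, and a stable Higgs bundle $(\mathcal{F},\theta)$ can satisfy $(\mathcal{F},\theta)\simeq(\mathcal{F},\zeta\theta)$ for a root of unity $\zeta$). So the single map $q$ you write down need not be smooth, and every step in your argument that invokes ``smooth base change for $\phi$'' or ``$q^*$ intertwines $b_*,a_!$ with their upstairs analogues'' is unjustified as stated.

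The paper repairs exactly this point by factoring the quotient morphism as $\pi=q\circ p$ (following \cite[(72)]{decataldo-cambridge}): first a quotient $p$ by a finite group containing all the stabilizers, then a quotient $q$ by the residual \emph{free} $\mathbb{G}_m$-action. One then pushes the upstairs vanishing down through $p$ using properness ($p_*\phi=\phi p_*$), base change for $b^!=b^*$, and the fact that $\overline{\mathbb{Q}}_\ell$ is a direct summand of $p_*\overline{\mathbb{Q}}_\ell$; only after this does one have a genuine smooth morphism $q$ to which your pullback-and-conservativity argument applies. (Incidentally, the paper obtains $\phi\,\overline{\mathbb{Q}}_\ell=0$ by quoting \cite[Lemma 4.1.5]{decataldo-cambridge} directly, and deduces $\phi a_!a^!\overline{\mathbb{Q}}_\ell=0$ from the other two via the distinguished triangle $(a_!a^!,\mathrm{Id},b_*b^*)$, so only $\phi b_*b^*\overline{\mathbb{Q}}_\ell=0$ needs the full descent argument.) Your proposal becomes correct once you insert this two-step factorization and handle $p$ separately.
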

\begin{proof}
For the first $\phi  \overline{\mathbb{Q}}_\ell = 0$, we apply \cite[Lemma 4.1.5]{decataldo-cambridge}. The second $\phi a_!a^! \overline{\mathbb{Q}}_\ell =0$ would follow from $\phi  \overline{\mathbb{Q}}_\ell=\phi b_*b^* \overline{\mathbb{Q}}_\ell=0$ by applying $\phi$ to the distinguished triangle
$(a_! a^!, \text{Id}, b_* b^*).$ We are therefore left with proving $\phi b_*b^* \overline{\mathbb{Q}}_\ell=0$.

We use the notation of \cite[(72)]{decataldo-cambridge} freely, where 
the quotient morphism $\pi= q \circ p:
M^* \to {M^*}' \to \overline{M{\rm{Hodge}}}$  by $\mathbb{G}_m$ is written as the composition of a quotient by a finite group
(containing the stabilizers of the action)
followed by a quotient by the free residual $\mathbb{G}_m$-action.

We shall show that the desired identity $\phi b_*b^* \overline{\mathbb{Q}}_\ell=0$ holds on the second chart $\overline{M{\rm de \, Rham}}\times \mathbb{A}^1$, 
with quotient map 
$\pi= q \circ p:   (M^*)_{x=1} \times \mathbb{A}^1 \to  \overline{M{\rm de \, Rham}}\times \mathbb{A}^1.$ The same proof applies for the first chart.

We have the following chain of  implications:  (caution, the first identity is on the chart
before taking the quotient, and the last is on the chart itself, i.e. after the application of $\pi = q \circ p$)

$
(\phi b_* b^* \overline{\mathbb{Q}}_\ell=0)
$
(on $(M^*)_{x=1} \times \mathbb{A}^1$)
$
 \Rightarrow
$

$
(p_*(\phi b_* b^* \overline{\mathbb{Q}}_\ell) =0) \Rightarrow
$
$
(\phi  p_* b_* b^* \overline{\mathbb{Q}}_\ell =0) \Rightarrow
$
$
(\phi  b_* p_* b^* \overline{\mathbb{Q}}_\ell =0) \Rightarrow
$

$
(\phi  b_*  b^* p_* \overline{\mathbb{Q}}_\ell =0) \Rightarrow
$
$
(\phi  b_*  b^*  \overline{\mathbb{Q}}_\ell =0) \Rightarrow
$
$
(\phi  b_*  b^* q^* \overline{\mathbb{Q}}_\ell =0) \Rightarrow
$

$
(\phi  b_*  q^* b^* \overline{\mathbb{Q}}_\ell =0) \Rightarrow
$
$
(\phi  q^* b_*  b^* \overline{\mathbb{Q}}_\ell =0) \Rightarrow
$
$
(q^* \phi b_*  b^* \overline{\mathbb{Q}}_\ell =0) \Rightarrow
$

$
(\phi b_*  b^* \overline{\mathbb{Q}}_\ell =0) 
$ (on $\overline{M{\rm de \, Rham}} \times \mathbb{A}^1$),

\noindent
where: the first implication is a mere application of $p_*;$
the second is because $p$ is finite, hence proper, so that $p_*\phi= \phi p_*;$
the third is by the commutativity of \cite[(72)]{decataldo-cambridge}; the fourth is because $b^*=b^!$ for open immersions
and we always have base change $p_*b^!=b^!p_*;$
the fifth is because $\overline{\mathbb{Q}}_\ell$ is a direct summand of $p_* \overline{\mathbb{Q}}_\ell$ (cf.  \cite[Lemma 5.3]{decataldo-zhang-completion} applied to $p$);
the sixth is simply because $q^* \overline{\mathbb{Q}}_\ell = \overline{\mathbb{Q}}_\ell;$ the seventh is
 by the commutativity of \cite[(72)]{decataldo-cambridge};  the eight is because $q$ is smooth of relative dimension one
 so that $q^!$ equals $q^*[2]$ and base change; the ninth is again by the smoothness
 of $q$ since then $\phi q^* = q^* \phi;$ and the tenth is because $q^*$ preserves stalks
 and $q$ is surjective.
\end{proof}

\end{subsection}

\begin{subsection}{Proof of Theorems \ref{thm:spk} and \ref{thm:spv}}\label{subs:pfspk}

\begin{notn}
When we are working in positive equicharacteristic, there is a filtered version
of the statements of Theorems \ref{thm:spk} and \ref{thm:spv}. When we do not wish to repeat verbatim an argument
which has been provided for the unfiltered version in order to 
 prove the filtered version, we resort to locutions such as ``(filtered) isomorphism."
\end{notn}

 \begin{proof}[Proof of Theorems \ref{thm:spk}]
By virtue of
Lemma \ref{lm:phib},
the hypotheses  of the unfiltered version of  \cite[Prop. 3.4.2.(A)]{decataldo-cambridge}
are met when applied to the completion 
$\overline{\tau_k}$ (\ref{cotau}) of the structural morphism $\tau_k$ of the Hodge moduli 
space. 
We deduce that the arrows on the bottom row of (\ref{eq:qapl}) are isomorphisms of cohomology
 rings, that  the specialization morphism is defined, and that  it is an isomorphism of cohomology rings. 
 For the filtered version of the sought-after statement, we use the filtered version of 
\cite[Prop. 3.4.2.(A)]{decataldo-cambridge}.
 
 By applying the same method of proof of \cite[Thm. 3.5]{decataldo-zhang-nahpostive}, we see that we reach the desired conclusions
 for the top row of (\ref{eq:qapl}) (filtered and unfiltered version).
 
 The left-hand-side vertical arrow in (\ref{eq:qapl}) is the identity, hence the sought-after properties are automatically valid.
 
 The right-hand-side vertical arrow, being identified with the morphism associated with an extension of separably closed fields,
 is also a (filtered) isomorphism.
 
 Every arrow  in diagram (\ref{eq:qapl}), except for the middle vertical arrow, is a (filtered) isomorphism,
 forcing the middle vertical arrow to be one as well.
\end{proof}

\begin{proof}[Proof of Theorem \ref{thm:spv}]
The goal is to prove that all the arrows in
 (\ref{iso55}) exist and are (filtered) isomorphisms.

 As a starting point,
we use  the commutative diagram of  non-curved morphisms 
of cohomology rings (\ref{eq:iso055}).
The non-curved arrows in the top and bottom  row of   (\ref{eq:iso055}) are (filtered) isomorphisms of cohomology rings by Theorem \ref{thm:spk} applied to $\tau_{s}$ and to $\tau_{\overline{\eta}}$.  We also have
that the corresponding specialization morphisms on the top and bottom rows
are  defined and are (filtered) isomorphisms.
  
 We use the completion  $\overline{v_{{\rm{Higgs}}}}$ (\ref{cohi}) 
of the Higgs  moduli spaces. Since we have proven smoothness of the morphism $v_{Higgs,B} : M{\rm{Higgs}}_{C_B}^{ss} \to B$, we can check the hypotheses of \cite[Prop. 3.4.2.(A)]{decataldo-cambridge} exactly in the same way as we did for Hodge in Lemma \ref{lm:phib} (note that in this case we don't need to consider the second chart). It follows that
all the non-curved arrows
in the left-hand-side  Higgs column of (\ref{eq:iso055})  are filtered isomorphisms,
and that the corresponding filtered specialization morphism is defined and is a filtered isomorphism. Here we are using universal corepresentability $M{\rm{Higgs}}_{C_B}^{ss}$ (Remark \ref{remark: universal correpresentability}) to identify the special fiber with $M{\rm{Higgs}}_{C_{s}}^{ss}$.

The arrows in the right-hand-side de Rham column of \eqref{eq:iso055} are well-defined isomorphisms (filtered, when $\text{char} B  >0$) by the same argument using the completion $\overline{v_{{\rm de \, Rham}}}$ (\ref{code}) 
of de Rham moduli spaces with poles. 

We use the completion  $\overline{v_{{\rm{Hodge}}}}$ (\ref{eq:lok}) of the Hodge moduli space over $B$.
In view of Lemma \ref{lm:phib}, we can apply   \cite[Prop. 3.4.2.(A)]{decataldo-cambridge} and deduce that
the  middle Hodge column is made of (filtered) isomorphisms and that the (filtered) specialization morphism
for this middle Hodge columns  is defined and is a (filtered) isomorphisms.

We are now left with  showing that the  two horizontal arrows $\rho_{0_B}$ and $\rho_{1_B}$ in the middle row
are (filtered) isomorphisms.
This follows formally from the commutativity of the diagram of non-curved arrows (\ref{eq:iso055}),
 and the fact that all the remaining non-curved arrows 
have been proven to be (filtered) isomorphisms.
\end{proof}
\end{subsection}
\end{section}

\begin{section}{Appendix: Factorization of the $p$-curvature morphism; with Siqing Zhang}\label{appdx}
\begin{centering}
 \large{ \; \;\textbf{Mark Andrea de Cataldo, Andres Fernandez Herrero, Siqing Zhang}}
\end{centering}

 For the definition of the Hodge-Hitchin morphism (\ref{eq:hodgemo1}) in the case of connections without poles, see \cite[Prop. 3.2]{laszlo-pauly-p-curvature}, which works with a curve  over a field. For a stronger result, which covers the case of curves
 --and of higher dimensions as well-- over a Noetherian base, see \cite[Cor. 5.7]{langer-moduli-lie-algebroids}.

 The proof of \cite[Prop. 3.2]{laszlo-pauly-p-curvature}  contains a minor inaccuracy, for it is stated that
 the stack of $t$-connections (without poles) is smooth over the base field, whereas even the open substack of semistables is not smooth. This purported smoothness is used in the proof of loc. cit.

 Theorem \ref{thm:smooth}  proves that the stack of semistable $t$-connections
 with poles is smooth over the Noetherian base $B.$ In this paper, we apply this smoothness result to the case when  $B$ is a field and  when $B$ is a DVR, both of which are reduced. Therefore, 
 the proof given in  \cite[Prop. 3.2]{laszlo-pauly-p-curvature} works, with the trivial modification
stemming from the fact that: while in the case with no poles 
loc. cit. uses, in the context of the elegant 
``Bost's Trick," the elementary identity 
 $\partial_x^{[p]}:= \partial_x \circ \cdots \circ \partial_x$ ($p$ times) $=0$, in the case with poles we can use the identity $(x\partial_x)^{[p]} = x \partial_x.$ In the end,
 while loc. cit. ends with a factor $t$ in the case without poles, we end with a factor $tx$
 in the case with poles, and the logic to reach the desired conclusion, namely the existence of the Hodge-Hitchin morphism for families of curves over a reduced Noetherian scheme, is the same.

 In this appendix, we remove the assumption made above of semistability,
 as well as the assumption on the Noetherian $B$ being reduced. We give a proof of the existence of the Hodge-Hitchin morphism (\ref{eq:hodgemo1})
 from the stack of $t$-connections, with or without poles for a family
 of curves over a Noetherian base $B.$ Moreover, we correct the minor inaccuracy in the proof of
\cite[Prop. 3.2]{laszlo-pauly-p-curvature}.

 The key step is to reduce to an auxiliary family of curves over a suitable
 complete and reduced ring, where then the Lazlo-Pauly logic is valid,
 without poles (factor $t$), and with poles (factor $tx$).
 We now give the details of this key reduction  step.
 
By Noetherian approximation (more precisely: choose a relative polarization and then use the fact that the stack of polarized smooth geometrically connected curves over $\mathbb{F}_p$ is locally of finite presentation over $\mathbb{F}_p$ \cite[\href{https://stacks.math.columbia.edu/tag/0DSS}{Tag 0DSS}]{stacks-project}+ \cite[\href{https://stacks.math.columbia.edu/tag/0CE81}{Tag 0E81}]{stacks-project}+\cite[\href{https://stacks.math.columbia.edu/tag/0DQ0}{Tag 0DQ0}]{stacks-project}, combined with \cite[\href{https://stacks.math.columbia.edu/tag/0CMX}{Tag 0CMX}]{stacks-project} applied to a colimit $B=\colim_i B_i$ as in \cite[\href{https://stacks.math.columbia.edu/tag/01ZA}{Tag 01ZA}]{stacks-project}) the curve $C \to B$ fits into a Cartesian diagram:
\begin{figure}[H]
\centering
\begin{tikzcd}
  C \ar[r] \ar[d] & C' \ar[d] \\  B \ar[r] & B',
\end{tikzcd}
\end{figure}
where $B'$ is of finite type over the prime field $\mathbb{F}_p$ and $C' \to B'$ is a smooth projective morphism with geometrically connected fibers of dimension $1$. This means that the Hodge stacks fit into the following diagram
\begin{figure}[H]
\centering
\begin{tikzcd}
  \mathcal{M}\text{Hodge}(C) \ar[r] \ar[d] & \mathcal{M}\text{Hodge}(C') \ar[d] \\  B \ar[r] & B',
\end{tikzcd}
\end{figure}
Since the formation of the $p$-curvature morphism is compatible with base-change in the base $B$, it suffices to show the desired factorization for $\mathcal{M}\text{Hodge}(C')$, and so we can assume without loss of generality that $B$ is of finite type over $\mathbb{F}_p$.

Since the stack $\mathcal{M}\text{Hodge}(C)$ is locally of finite type over $B$ \cite[Prop 2.2.2]{torsion-freepaper}, it suffices to check the desired factorization for any family over an affine scheme $\on{Spec}(R)$ with $R$ of finite type over $\mathbb{F}_p$. Such a point $\on{Spec}(R) \to \mathcal{M}\text{Hodge}(C)$ corresponds to a function $t \in  R$, a vector bundle $\mathcal{F}$ on $C_R$, and a $t$-connection $\nabla$ on $\mathcal{F}$.

We can write $R = \mathbb{F}_p[t_1, t_2, \ldots t_r]/I$ for some ideal $I$. We denote by $\hat{S}$ the completion of the polynomial ring $\mathbb{F}_p[t_1, t_2, \ldots t_r]$ with respect to the ideal $I$. Since $\mathbb{F}_p[t_1, t_2, \ldots t_r]$ is a reduced G-ring \cite[\href{https://stacks.math.columbia.edu/tag/07PX}{Tag 07PX}]{stacks-project}, the completion $\hat{S}$ is reduced (\cite[\href{https://stacks.math.columbia.edu/tag/0AH2}{Tag 0AH2}]{stacks-project} + \cite[\href{https://stacks.math.columbia.edu/tag/0C21}{Tag 0C21}]{stacks-project}).  Choose a $\on{Spec}(R)$-ample line bundle $\mathcal{L}$ on the family $C$. The deformation theory of smooth curves equipped with ample line bundles is unobstructed (\cite[\href{https://stacks.math.columbia.edu/tag/0AH2}{Tag 0AH2}]{stacks-project}+\cite[\href{https://stacks.math.columbia.edu/tag/0E84}{Tag 0E84}]{stacks-project}). 
Similarly the deformation theory of the vector bundle $\mathcal{F}$ has obstructions in the groups $H^2(C, I^j \otimes \mathcal{E}nd(\mathcal{F})) = 0$ \cite[Thm. 8.5.3]{fga-explained}, and so it 
is unobstructed as well. Therefore we can get a compatible family 
of lifts of the triple $(C, \mathcal{L}, \mathcal{F})$ for 
every nilpotent thickening $\mathbb{F}_p[t_1, t_2, \ldots, t_r]/I^j$ as $j$ ranges over the positive integers. 
By Grothendieck's existence and algebraization theorems (\cite[\href{https://stacks.math.columbia.edu/tag/089A}{Tag 089A}]{stacks-project}+ \cite[\href{https://stacks.math.columbia.edu/tag/0885}{Tag 03O}]{stacks-project}), we can algebraize this formal tuple into families $(\widetilde{C}, \widetilde{\mathcal{F}}, \widetilde{\mathcal{L}})$ over $\on{Spec}(\hat{S})$. Therefore, 
we get a Cartesian diagram of families of smooth curves:
\begin{figure}[H]
\centering
\begin{tikzcd}
  C \ar[r] \ar[d] & \widetilde{C} \ar[d] \\  \on{Spec}(R) \ar[r, symbol = \hookrightarrow] & \on{Spec}(\hat{S}),
\end{tikzcd}
\end{figure}
and a vector bundle $\widetilde{\mathcal{F}}$ on $\widetilde{C}$ 
such that its restriction to $C$ recovers $\mathcal{F}$. Choose 
a lift $\widetilde{t} \in \hat{S}$ of $t \in R$. 
In order to show the factorization of the $p$-curvature morphism 
as in \cite[Prop. 3.2]{laszlo-pauly-p-curvature}, we need to 
show that certain canonically defined sections of powers 
of the line bundle $\omega_{C/S}$ vanish. This can be done Zariski locally on $C.$ 
Choose an affine open covering $\widetilde{U}_i$ of $\widetilde{C}$ 
that trivializes $\widetilde{\mathcal{F}}$. We fix trivializations 
of $\widetilde{\mathcal{F}}|_{\widetilde{U}_i}$. We denote by $U_i$
the restriction to $C$, which yields an affine open covering with 
trivializations of the restriction $\mathcal{F}$. It suffices to 
show that the factorization of the $p$-curvature map on every $U_i$.
The $t$-connection $\nabla$ on the trivial bundle $\mathcal{F}|_{U_i}$ 
can be written as $t d_{U_i} +M$, where 
$d_{U_i}: \mathcal{O}_{U_i} \to \Omega^1_{U_i/\hat{S}}$ 
denotes the exterior derivative on $U_i$ and $M \in H^0(\omega_{U_i/R}^{\oplus n^2})$ is a matrix of differentials. Choose a lift $\widetilde{M} \in H^0(\omega_{\widetilde{U}_i/\hat{S}}^{\oplus n^2})$
of $M$, and define $\widetilde{\nabla}$ to be the $\widetilde{t}$ connection $\widetilde{t} d_{\widetilde{U}_i} + \widetilde{M}$ on the trivial bundle $\widetilde{\mathcal{F}}_{\widetilde{U}_i}$. The $\widetilde{t}$-connection $(\widetilde{\mathcal{F}}|_{\widetilde{U}_i}, \widetilde{\nabla})$ on $\widetilde{U}_i$ restricts to the $t$-connection $(\mathcal{F}|_{U_i}, \nabla)$ under the base-change by $\on{Spec}(R) \hookrightarrow \on{Spec}(\hat{S})$. Since the formation of the $p$-curvature is compatible with such base-change, it suffices to show the desired factorization for the $\widetilde{t}$-connection $(\widetilde{\mathcal{F}}|_{\widetilde{U}_i}, \widetilde{\nabla})$ on $\widetilde{U}_i$. Thus we can work over the reduced ring $\hat{S}$ and on affine open subsets of $\widetilde{C}$ to prove the desired factorization. By passing to each irreducible component of $\on{Spec}(\hat{S})$, we can furthermore assume that $\hat{S}$ is an integral domain. Hence we can use the local computation outlined in the proof of \cite[Prop. 3.2]{laszlo-pauly-p-curvature}, which assumes that the base ring is an integral domain.
The calculation is carried out in the case without poles by using the vector field $\partial_x.$ The case with poles is analogous,
once we replace the vector field  $\partial_x$ with $x\partial_x.$
Note also that the case without poles implies directly the case with poles: the sections are trivial away from the poles, hence are trivial across the poles.
\end{section}

\bibliographystyle{alpha}
\footnotesize{\bibliography{smoothness_hodge.bib}}

  \textsc{Mark Andrea de Cataldo, Department of Mathematics, Stony Brook University},
  \texttt{mark.decataldo@stonybrook.edu}
  
  \par\nopagebreak
  
    \textsc{Andres Fernandez Herrero, Department of Mathematics, Columbia University}, \texttt{af3358@columbia.edu}
    
    \par\nopagebreak
    
     \textsc{Siqing Zhang, Department of Mathematics, Stony Brook University}, \par \nopagebreak \noindent\texttt{siqing.zhang@stonybrook.edu}
  
\end{document}